\documentclass{article}[a4paper]
\usepackage{amsmath,amssymb,amsthm}
\usepackage{enumitem}
\usepackage{mathtools} %coloneq command
\usepackage{pgfplots}

\pgfplotsset{compat=1.18}

\newtheorem{theorem}{Theorem}[section]	
\newtheorem{lemma}[theorem]{Lemma}
\newtheorem{corollary}[theorem]{Corollary}
\newtheorem{proposition}[theorem]{Proposition}

\theoremstyle{definition}

\newtheorem{remark}[theorem]{Remark}
\newtheorem{example}[theorem]{Example}

\newcommand{\imp}{\mathbin{\to}}

\begin{document}
\title{Conuclei on varieties of hoops}
	
\author{Sebasti\'an Buss, Diego Casta\~no, Jos\'e Patricio D\'iaz Varela}
		
\maketitle
	
\begin{abstract}
A conucleus $\delta$ on a partially ordered monoid $\mathbf{A}$ is an interior operator that satisfies $\delta(a) \cdot \delta(b) \leq \delta(a \cdot b)$ and $\delta(a) \cdot \delta(1) = \delta(a)$ for all $a,b \in A$. A conucleus is multiplicative if the equality $\delta(a \cdot b) = \delta(a) \cdot \delta(b)$ holds for all $a,b \in A$. In this article we focus on the study of conuclei on hoops, structures which generalize well-known classes of algebras, such as the class of MV-algebras and BL-algebras. Among several results, we provide a Glivenko-type theorem for conuclei. Special emphasis is given to term definable conuclei. The main result of this article is an explicit description of all terms that define a multiplicative conucleus on every structure of an arbitrary proper variety of Wajsberg hoops. We also show that the problem of finding terms that define (multiplicative) conuclei on a variety of basic hoops or BL-algebras is equivalent to finding such terms on some variety or some pair of varieties of Wajsberg hoops. We provide nontrivial interesting examples.
\end{abstract}
	
\section{Introduction}

A \emph{conucleus} on a partially ordered monoid $\mathbf{A} = \langle A,\leq, \cdot,1 \rangle$ is an interior operator $\delta \colon A \to A$ such that $\delta(a) \cdot \delta(1) = \delta(a)$ and $\delta(a) \cdot \delta(b) \leq \delta(a \cdot b)$ for all $a,b \in A$. The image $\delta(A)$ can be endowed with a partially ordered monoid structure: the \emph{conuclear image} of $\mathbf{A}$ with respect to $\delta$ is the partially ordered monoid $\mathbf{A}_\delta \coloneq \langle \delta(A), \leq, \cdot, \delta(1) \rangle$. The notion of a conucleus can be extended to several classes of structures with a partially ordered monoid reduct. 

In the context of residuated lattices, the concept of a conucleus was introduced by Galatos \cite{GALATOS2003}. In the aforementioned work, the notion of a \emph{kernel}, that is, a conucleus such that $\delta(1) = 1$ and $\delta(\delta(x) \land y) = \delta(x) \land y$, is used to obtain representation results for GMV-algebras. The kernel construction also generalizes the notion of the negative cone of abelian lattice-ordered groups, which was utilized extensively in the setting of MV-algebras.

Another important application of conuclei is the construction known as \emph{poset product}, introduced with the name of \emph{poset sum} in \cite{JIPSEN2009-1}. A poset product of a family of residuated lattices is a specific conuclear image of a direct product of these algebras \cite[Lemma~9.4]{JIPSEN2010}. Poset products have been used to obtain representation results for several classes of GBL-algebras. For a detailed study, we refer the reader to \cite{JIPSEN2009-1,JIPSEN2009-2,JIPSEN2010}. The conucleus that defines poset products has the particularity of satisfying the equality $\delta(x \cdot y) = \delta(x) \cdot \delta(y)$. This property will play an important role in this work. A conucleus that satisfies this equality will be called a \emph{multiplicative conucleus}.

In the framework of residuated lattices, conuclei were also used to represent \emph{Nelson conucleus algebras} \cite{BUSANICHE2022}, a class of algebras that generalizes Kalman residuated lattices, Nelson residuated lattices and Nelson paraconsistent residuated lattices. Nelson conucleus algebras are (cyclic) involutive resiudated lattices equipped with a special kind of multiplicative conucleus, called \emph{Nelson conucleus}. In the case of the aforementioned examples, the corresponding Nelson conucleus can be defined with a term. The representation of a conucleus with a term will be another important property that will be studied in this work.  

In this article we focus on conuclei defined on hoops, with special emphasis in conuclei given by terms on varieties of Wajsberg hoops, basic hoops and BL-algebras. We start with some preliminaries in Section~\ref{sec:prelim}. We continue with some basic results in Section~\ref{sec:conuclei_on_hoops}. The main result of this section is the analogous of the Glivenko property of nuclei (see, for example, \cite{ZHAO2013}), but in the context of conuclei (Theorem~\ref{thm:conuclei_glivenko}). In Section~\ref{sec:conuclei_defined_by_terms}, we turn our focus to the study of terms that define conuclei on varieties of hoops and bounded hoops. We start with examples of varieties where the only term that defines a conucleus on said varieties is (up to equivalence) the identity term $t(x) \coloneq x$ (Proposition~\ref{prop:WH_conuclei}, Proposition~\ref{prop:cancellative_conuclei}). We end this section exhibiting all terms that define (up to equivalence) conuclei on the variety generated by the six element Wajsberg chain. This example shows that describing all conuclei on an arbitrary variety of Wajsberg hoops can result in a nontrivial task. In view of this, we shift our focus to the study of terms that define multiplicative conuclei on varieties of Wajsberg hoops, which will be the main topic in Section~\ref{sec:multiplicative_Wajsberg_conuclei}. The main result of this section is a description of all terms that define multiplicative conuclei on an arbitrary proper variety of Wasjberg hoops (Theorem~\ref{thm:proper_WH_multiplicative_conuclei}). We also show that there exists an effective procedure to calculate (up to equivalence) all multiplicative conuclei on said variety (Theorem~\ref{thm:multiplicative_separation_conucleus}). Furthermore, the number of terms that define a multiplicative conucleus on a proper variety of Wajsberg hoops is (up to equivalence) finite. We end this work with Section~\ref{sec:conuclei_on_BH_and_BL}, where we show that the problem of finding terms that define (multiplicative) conuclei on varieties of basic hoops or varieties of BL-algebras, reduces to the problem of finding such terms on some variety or some pair of varieties of Wajsberg hoops (Theorem~\ref{thm:basic_varieties_conuclei} and Theorem~\ref{thm:BL_conuclei}). 

\section{Preliminaries}\label{sec:prelim}

In this section, we provide basic definitions and examples that are going to be used throughout this paper, together with some general results. For notation and concepts of universal algebra used in this article, we refer the reader to \cite{BURRIS1981}.

A \emph{hoop} \cite{FERREIRIM1992} is an algebra $\mathbf{A} \coloneq \langle A, \cdot, \imp, \top \rangle$ such that $\langle A, \cdot, \top \rangle$ is a commutative monoid and the following identities hold
\begin{equation*}
	\begin{aligned}
		x \imp x & \approx \top, \\
		x \cdot(x \imp y) & \approx y \cdot(y \imp x), \\
		x \imp (y \imp z) & \approx (x \cdot y) \imp z.
	\end{aligned}
\end{equation*}
The class of all hoops is a variety that will be denoted by $\mathcal{H}$. As usual, we define $a^0 \coloneq \top$ and $a^{n+1} \coloneq a^n \cdot a$ for all $a \in A$, $n \in \mathbb{N}_0$. 

\begin{proposition}\label{prop:hoop_order}
	Let $\mathbf{A}$ be a hoop. The binary relation
	\begin{equation*}
		x \leq y \Leftrightarrow x \imp y \approx \top
	\end{equation*}
	defines a partial order with a semilattice structure over $\mathbf{A}$, where the infimum is given by $x \land y = x \cdot (x \imp y)$ and $\top$ is the greatest element. Furthermore, 
	\begin{equation*}
		x \cdot y \leq z \Leftrightarrow x \leq y \imp z.
	\end{equation*}
\end{proposition}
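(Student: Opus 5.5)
We derive everything from the three hoop identities plus the commutative monoid axioms, proving first a few elementary facts and then reading off the order, residuation and meet properties. The plan is to establish residuation early, since it drives the rest.

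\emph{Basic facts.} We first prove:
\begin{itemize}
\item[(a)] $x \imp \top = \top$;
\item[(b)] $\top \imp x = x$;
\item[(c)] $x \cdot (x \imp y) \leq y$, where $\leq$ is read formally as ``$\imp$ gives $\top$''.
\end{itemize}
For (b), the second identity with $x:=\top$ gives $\top\cdot(\top\imp y) = y\cdot(y\imp\top)$, i.e.\ $\top\imp y = y\cdot(y\imp \top)$. For (a), the third identity gives $x \imp (x\imp x) = (x\cdot x)\imp x$ and $x\imp(x\imp x)=x\imp\top$. The more direct route uses $(x\cdot y)\imp x = y\imp(x\imp x)=y\imp\top$ together with the second identity applied to the pair $\top, y\imp\top$. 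Having (a) and (b), we compute $(x\cdot(x\imp y))\imp y = (x\imp y)\imp(x\imp y)=\top$, which gives (c).

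\emph{Residuation.} If $x\cdot y\leq z$, then by the third identity $x\imp(y\imp z)=(x\cdot y)\imp z=\top$, so $x \leq y\imp z$ after using commutativity. The argument reverses, so residuation holds: $x\cdot y\le z \iff x\le y\imp z$.

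\emph{Partial order.} Reflexivity is the first axiom. For transitivity, suppose $x\le y$ and $y\le z$. By (c), $x = x\cdot\top = x\cdot(x\imp y)$, and by the second identity this equals $y\cdot(y\imp x)$, which is $\le y$. A cleaner route: residuation gives $x\le y$ iff $x\cdot \top\le y$ trivially, and monotonicity of $\cdot$ follows from residuation via $y\cdot w\le y\cdot w$, giving $y\le w\imp(y\cdot w)$. Transitivity then follows since $(x\imp y)\cdot(y\imp z)\cdot x\le z$ by (c) twice, so $(x\imp y)\cdot(y\imp z)\le x\imp z$; with both premises equal to $\top$, this yields $\top\le x\imp z$, and hence $x\imp z=\top$ by (b).

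Antisymmetry is the key use of the divisibility axiom. If $x\imp y=\top=y\imp x$, then $x = x\cdot(x\imp y)=y\cdot(y\imp x)=y$. Finally, $\top$ is the greatest element by (a).

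\emph{Meet.} Set $m=x\cdot(x\imp y)$. We have $m\le y$ by (c), and $m=y\cdot(y\imp x)\le x$ symmetrically, so $m$ is a lower bound of $x$ and $y$. Now let $w\le x$ and $w\le y$. By residuation, $w\le x\imp y$ holds since $w\cdot x\le w\le y$; this needs $w\cdot x\le w$, which follows from $x\le\top$ and monotonicity. Then $w = w\cdot(w\imp x)$ by divisibility with $w\imp x=\top$. The remaining task is to show $w\le x\cdot(x\imp y)$. For this we use $w=x\cdot(x\imp w)$ (divisibility plus $w\imp x=\top$), and then the monotonicity of $x\cdot(-)$ and of $x\imp(-)$ in the second argument, the latter again obtained from residuation. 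Together these give $x\cdot(x\imp w)\le x\cdot(x\imp y)$.

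The main obstacle is the order of bootstrapping, namely obtaining (a) and (b) purely equationally before residuation is available. If a direct derivation resists, we would try the substitution $y:=\top$ in the second axiom combined with $(x\cdot y)\imp y = x\imp\top$ from the third axiom. Everything after that is routine.
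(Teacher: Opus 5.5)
The paper quotes this proposition as a standard fact without proof, so the comparison here is with the usual derivation. Your overall plan is the standard one: residuation directly from the third axiom, and antisymmetry and the meet from divisibility. However, two load-bearing steps are not actually established.

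\textbf{First gap: the bootstrapping of (a) and (b).} Your argument for (b) only reaches $\top\imp y = y\cdot(y\imp\top)$, which already needs (a). Your routes to (a) only produce $x\imp\top=(x\cdot x)\imp x$ and $y\imp\top=(y\imp\top)\cdot((y\imp\top)\imp\top)$, and neither forces $y\imp\top=\top$. You acknowledge this and leave it open, yet ``$\top$ is greatest'' and transitivity depend on it.

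The missing ingredient is the instance $x\imp(\top\imp x)=(x\cdot\top)\imp x=\top$. One way to close the gap:
\begin{enumerate}[label=\rm{(\arabic*)}]
\item Divisibility for the pair $(\top\imp x,\,x)$ gives $x=(\top\imp x)\cdot((\top\imp x)\imp x)$.
\item Since $\top\imp x=x\cdot(x\imp\top)$, the third axiom turns $(\top\imp x)\imp x$ into $(x\imp\top)\imp\top$.
\item Divisibility for the pair $(\top,\,x\imp\top)$ gives $x\imp\top=(x\imp\top)\cdot((x\imp\top)\imp\top)$.
\item Substituting, $x=x\cdot(x\imp\top)=\top\imp x$, which is (b).
\item Now put $u=x\imp\top$. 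Then $u\cdot x=x$, so $\top=u\imp u=u\imp(x\imp\top)=(u\cdot x)\imp\top=u$, which is (a).
\end{enumerate}

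\textbf{Second gap: transitivity is circular.} You obtain $(x\imp y)\cdot(y\imp z)\cdot x\le z$ from (c) ``twice''. That requires multiplying an inequality by $y\imp z$ and then chaining two inequalities, i.e.\ it uses monotonicity and transitivity. Your derivation of monotonicity from $y\le w\imp(y\cdot w)$ also chains inequalities. When both premises equal $\top$, the claim you invoke is literally $x\le z$.

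Your abandoned first attempt is the right one. From $x\imp y=\top$ you get $x=x\cdot(x\imp y)=y\cdot(y\imp x)$. Hence
\[ x\imp z=(y\imp x)\imp(y\imp z)=(y\imp x)\imp\top=\top \]
by (a).

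With transitivity in hand, monotonicity of $\cdot$ and of $x\imp(-)$ follow from residuation as you indicate. Your antisymmetry argument and your greatest-lower-bound argument, via $w=x\cdot(x\imp w)\le x\cdot(x\imp y)$, then go through. Also, $w\cdot x\le w$ needs no monotonicity, since $(w\cdot x)\imp w=x\imp(w\imp w)=x\imp\top=\top$.
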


A \emph{bounded hoop} is an algebra $\mathbf{A} \coloneq \langle A, \cdot, \imp, \bot, \top \rangle$ such that $\langle A, \cdot, \imp, \top \rangle$ is a hoop and $\bot \leq a$ for each $a \in A$. As usual, we define $\neg a \coloneq a \imp \bot$ for all $a \in A$. The variety of bounded hoops will be denoted by $\mathcal{H}_\bot$. If $\mathbf{B} \coloneq \langle B, \cdot, \imp, \top \rangle$ is a hoop with least element $\bot$, we define the bounded hoop $\mathbf{B}^+ \coloneq \langle B, \cdot, \imp, \bot, \top \rangle$. 

In the following result, we list some basic algebraic properties of hoops (see, for example, \cite{FERREIRIM1992}).

\begin{proposition}\label{prop:hoops_properties}
	Let $\mathbf{A}$ be a hoop and $a,b,c \in A$. Then:
	\begin{enumerate}[label=\rm{(\arabic*)}]
		\item $\top \imp a = a$.
		\item If $a \leq b$, then $a \cdot c \leq b \cdot c$, $b \imp c \leq a \imp c$ and $c \imp a \leq c \imp b$.
		\item $a \cdot b \leq a \land b$.
		\item $b \leq a \imp (a \cdot b) \leq a \imp b$.
		\item $a \imp (b \imp c) = (a \cdot b) \imp c = b \imp (a \imp c)$.
		\item If $a \lor b$ exists, then $a \lor b \leq (a \imp b) \imp b$.
		\item $((a \imp b) \imp b) \imp b = a \imp b$.
	\end{enumerate}
\end{proposition}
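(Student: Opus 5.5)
We derive each item directly from the hoop axioms together with Proposition~\ref{prop:hoop_order}, mainly the residuation law $x \cdot y \leq z \Leftrightarrow x \leq y \imp z$ and the description of the order. We prove the items in the order listed, since later ones use earlier ones.

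For (1), note $a \leq \top \imp a$, because $a \cdot \top = a \leq a$ and residuation applies. Conversely, $\top \imp a \leq \top \imp a$ gives $(\top \imp a)\cdot \top \leq a$ by residuation, so $\top \imp a \leq a$. For (2), monotonicity of multiplication follows from residuation. Since $b \cdot c \leq b \cdot c$, we have $b \leq c \imp (b\cdot c)$; then $a \leq b$ gives $a \leq c \imp (b \cdot c)$, hence $a\cdot c \leq b \cdot c$. The antitonicity of $\cdot \imp c$ comes from $(b\imp c)\cdot a \leq (b \imp c)\cdot b \leq c$. For the latter inequality, use $x\cdot(x\imp y) \leq y$, which holds because $x \imp y \leq x \imp y$. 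The isotonicity of $c \imp \cdot$ comes from $(c \imp a)\cdot c \leq a \leq b$.

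For (3), $a \leq \top$ gives $a \cdot b \leq \top \cdot b = b$ by (2), and symmetrically $a\cdot b \leq a$. Hence $a\cdot b \leq a \land b$. For (4), the first inequality is residuation applied to $a \cdot b \leq a \cdot b$. The second follows from (3) and the isotonicity in (2). Item (5) is the third axiom combined with commutativity of $\cdot$.

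For (6), we have $a \leq (a \imp b)\imp b$ by residuation, since $a \cdot (a\imp b) \leq b$. We also have $b \leq (a\imp b)\imp b$ by (4) or (3). Therefore any upper bound inequality gives $a \lor b \leq (a \imp b) \imp b$. For (7), write $u = (a \imp b)\imp b$. From $a \leq u$, antitonicity (2) yields $u \imp b \leq a \imp b$. Conversely, $a\imp b \leq (u\imp b)$ is exactly residuation applied to $(a\imp b)\cdot u \leq b$, which is an instance of $x\cdot(x\imp y)\leq y$ with $x = a\imp b$. So equality holds.

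The main obstacle is only bookkeeping: each step must rest on residuation and on the basic inequality $x\cdot(x\imp y)\leq y$, rather than on unstated lattice facts. The one care point is (6), where $a\lor b$ is merely assumed to exist.
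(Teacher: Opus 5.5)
Your proof is correct and complete. Every item reduces to the residuation law of Proposition~\ref{prop:hoop_order} together with $x \cdot (x \imp y) \leq y$, and each step checks, including both halves of (7) and the upper-bound argument in (6). The paper does not prove this proposition itself; it only cites \cite{FERREIRIM1992}, and your derivation is the standard self-contained argument from residuation.
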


When the order of a hoop $\mathbf{A}$ is total, we say that $\mathbf{A}$ is a \emph{totally ordered hoop}. A \emph{Wajsberg hoop} is a hoop that satisfies the identity
\begin{equation*}\label{tanaka}
	(x \imp y) \imp y \approx (y \imp x) \imp x.
\end{equation*}
Let $\mathcal{WH}$ denote the variety of Wajsberg hoops. For every Wajsberg hoop, the natural hoop order defines a lattice structure, where the supremum is given by the term $x \lor y \coloneq (x \imp y) \imp y$.

A hoop is \emph{cancellative} if it satisfies the identity
\begin{equation*}
	x \imp (x \cdot y) \approx y.
\end{equation*}
The following result is a well-known property of totally ordered Wajsberg hoops (see, for example, \cite[Lemma~4.5, Proposition~4.6]{FERREIRIM1992}). We say that a hoop $\mathbf{A}$ is \emph{semi-cancellative} if $c < a \cdot b$ implies $a \imp (a \cdot b) = b$, for all $a,b,c \in A$.

\begin{proposition}\label{prop:to_wajsberg_is_semi_cancellative}
	Every totally ordered Wajsberg hoop is semi-cancellative. As a consequence, a nontrivial totally ordered Wajsberg hoop is either bounded or cancellative. 
\end{proposition}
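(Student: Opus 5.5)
The proof has two parts. First we show that a totally ordered Wajsberg hoop is semi-cancellative. Then we use this to get the dichotomy: bounded or cancellative.

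\emph{Semi-cancellativity.} Let $\mathbf{A}$ be a totally ordered Wajsberg hoop and let $a,b,c \in A$ with $c < a\cdot b$. By Proposition~\ref{prop:hoops_properties}(4) we have $b \leq a \imp (a\cdot b)$, so it suffices to prove the reverse inequality. Put $d \coloneq a \imp (a \cdot b)$; then $b \leq d$.

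The plan is to use the Wajsberg identity to write $d$ as a join $(d \imp b)\imp b$ and then use linearity to compare. Assume for contradiction that $b < d$. Then $d \imp b \neq \top$. Also $a\cdot d = a \land (a\cdot b) = a\cdot b$, since $a\cdot b \leq a$. Now consider the element $e \coloneq d \imp b$, which satisfies $e < \top$. By divisibility, $b = d \land b = d\cdot (d\imp b) = d\cdot e$. Hence
\[
a\cdot b = a\cdot d\cdot e = (a\cdot b)\cdot e .
\]
So $e$ fixes $a\cdot b$ multiplicatively.

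Next we exploit $c<a\cdot b$, together with totality, to show $e=\top$. Write $p \coloneq a\cdot b$. Since $p\cdot e = p$, we get $e \leq p \imp p$, which is $\top$ and gives nothing. So the Wajsberg identity is needed instead. Since $c<p$, linearity gives $p\imp c\neq\top$. Apply Wajsberg to $e$ and $c$:
\[
(e\imp c)\imp c = (c\imp e)\imp e .
\]
From $p\cdot e=p$ we get $e\imp c = e\imp (\dots)$, and the goal is to derive $p \leq e \imp c$ while $p\imp c < \top$. Concretely, since $p\cdot e = p$, iterating gives $p\cdot e^n = p$ for all $n$, so $p \leq e^n$ for all $n$. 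Then $c < p \leq e^n$, and in particular $c < e$.

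In a totally ordered Wajsberg hoop, $c<e<\top$ should force $e\cdot (e\imp c) = c$ together with a strict decrease of powers. The Wajsberg identity gives $(e\imp c)\imp c = e \lor c = e$, and so
\[
e = (e\imp c)\imp c \leq (e\imp c)\imp p' \quad \text{for all } p'\ge c .
\]
Combining $p = p\cdot e$ with $e=(e\imp c)\imp c$ yields $p\cdot (e\imp c) \leq c$. Together with $p \leq e\imp c$-type estimates, this should give $p\cdot p \le \dots$ and ultimately a contradiction with $c<p$.

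This is the step I expect to be hardest. I would first try to prove the auxiliary lemma: in a totally ordered Wajsberg hoop, if $x\cdot y = x$ and $x$ is not the least element, then $y=\top$. The witness $c<x$ supplies the non-minimality. That lemma finishes this part immediately with $x=p$ and $y=e$.

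\emph{Consequence.} Suppose $\mathbf{A}$ is not cancellative. Then there exist $a,b$ with $a\imp(a\cdot b)\neq b$. By semi-cancellativity, no $c$ satisfies $c<a\cdot b$, so $a\cdot b$ is the least element of $\mathbf{A}$. Hence $\mathbf{A}$ is bounded, with $\bot = a\cdot b$. Conversely, a nontrivial bounded hoop cannot be cancellative: take $x=\bot$ and $y\neq\bot$, so that $\bot\imp(\bot\cdot y)=\top\neq y$ whenever $y\neq\top$. Nontriviality guarantees that such an element exists, and taking $y=\bot \neq \top$ works. This gives the claimed dichotomy.
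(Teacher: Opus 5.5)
Your reduction is sound. With $d = a\imp(a\cdot b)$ and the assumption $b<d$, you correctly get $e = d\imp b<\top$ and $p\cdot e = p$ for $p = a\cdot b$. Your derivation of the dichotomy from semi-cancellativity is also fine; for the converse, $y=\bot$ gives $\bot\imp(\bot\cdot\bot)=\top\neq\bot$.

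The gap is that the step carrying all the content is never proved. The auxiliary lemma you defer to ($x\cdot y = x$ with $x$ not least forces $y=\top$) is not a simplification. It follows at once from semi-cancellativity: take $a=x$, $b=y$ to get $y = x\imp(x\cdot y) = x\imp x = \top$. Your reduction shows the converse, so you have only exchanged the statement for an equivalent one. The sketch around it does not close the gap either. The Wajsberg identity applied to $e$ and $c$ only yields $(e\imp c)\imp c = e\vee c = e$, which is vacuous, and ``this should give $p\cdot p\le\dots$ and ultimately a contradiction'' is not an argument. The paper gives no proof of its own here; it cites Ferreirim's thesis.

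The missing idea is to take the Wajsberg join with $z \coloneq p\imp c$ instead of $c$.
Since $c<p$ and the order is total, $z\neq\top$.
Since $e\cdot p = p$, the axiom $x\imp(y\imp w)\approx(x\cdot y)\imp w$ gives $e\imp z = (e\cdot p)\imp c = p\imp c = z$.
Hence $e\vee z = (e\imp z)\imp z = z\imp z = \top$.
In a chain $e\vee z = \max\lbrace e,z\rbrace$, and $z<\top$, so $e=\top$, contradicting $e<\top$.
Inserting these lines in place of your sketch completes the proof, and the detour through $p\le e^n$ and $c<e$ becomes unnecessary.
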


Let us continue with some well-known examples that are going to be used throughout this work.

\begin{example}\label{ex:hoops}
	Consider $\mathbf{S} \coloneq \langle [0,1], \cdot, \to, 1 \rangle$, where $x \cdot y \coloneq \max \lbrace x + y - 1, 0 \rbrace$ and $x \imp y \coloneq \min \lbrace y - x + 1, 1 \rbrace$. With these definitions, $\mathbf{S}$ is a totally ordered Wajsberg hoop. 
	
	The set $\lbrace 1 \rbrace$ is a subuniverse of $\mathbf{S}$. The algebra associated to this subuniverse, which will be denoted by $\mathbf{S}_0$, is a trivial hoop. For each $n \in \mathbb{N}$, consider the subset of $S$ defined by $S_n \coloneq \lbrace \frac{i}{n}: i \in \lbrace 0,\dots,n \rbrace \rbrace$. Then, $S_n$ is a subuniverse of $\mathbf{S}$ and the structure $\mathbf{S}_n \coloneq \langle S_n, \cdot^\mathbf{S}, \imp^\mathbf{S}, 1 \rangle$ is a totally ordered Wajsberg hoop.
	
	Next, consider the set $\mathbb{N}_0$ with the operation $i \imp j \coloneq \max\lbrace j-i,0 \rbrace$. If $+$ is the usual sum of integers, the structure $\mathbf{S}^\omega \coloneq \langle \mathbb{N}_0, +, \imp, 0 \rangle$ is a cancellative totally ordered Wajsberg hoop.  
	
	Finally, consider $n \in \mathbb{N}$ and the closed interval $S_n^\omega \coloneq [(0,0), (n,0)]_{\mathsf{lex}}$ of $\mathbb{Z}^2$ (where $\mathsf{lex}$ denotes the lexicographic order of $\mathbb{Z}^2$), with $x \cdot y \coloneq \neg(\neg x \oplus \neg y)$ and $x \imp y \coloneq \neg x \oplus y$, where $x \oplus y \coloneq (x + y) \land (n,0)$ and $\neg x \coloneq (n,0) - x$, with $+$ and $-$ the addition and subtraction operations on $\mathbb{Z}^2$, respectively. The algebra $\mathbf{S}_n^\omega \coloneq \langle S_n^\omega, \cdot, \imp, \top \rangle$ is a totally ordered Wajsberg hoop.		
\end{example}

We recall a well-known result about varieties of Wajsberg hoops. Let $I,J$ be finite subsets of $\mathbb{N}$ and let $K \subseteq \lbrace 0 \rbrace$. A triple $(I,J,K)$ is \emph{reduced} if
\begin{enumerate}[label=-]
	\item $I \cup J \cup K \neq \emptyset$;
	\item if $K = \lbrace 0 \rbrace$, then $J = \emptyset$;
	\item for all $i \in I$ and $i' \in (I \backslash \lbrace i \rbrace) \cup J$, $i$ does not divide $i'$;
	\item for all $j \in J$ and $j' \in J \backslash \lbrace j \rbrace$, $j$ does not divide $j'$.
\end{enumerate}
For a class of (bounded) hoops $\mathcal{K}$, let $\mathsf{V}(\mathcal{K})$ denote the variety generated by $\mathcal{K}$. The following facts can be found in \cite{AGLIANO2002}.

\begin{theorem}\label{thm:proper_WH_varieties} $ $
	\begin{enumerate}[label=\rm{(\arabic*)}]
		\item $\mathcal{WH} = \mathsf{V}(\mathbf{S})$.
		\item The proper subvarieties of $\mathcal{WH}$ are in a 1-1 correspondence with the reduced triples, via the mappings
		\begin{equation*}
			\begin{aligned}
				(I,J,\emptyset) & \mapsto \mathsf{V}(\lbrace \mathbf{S}_i: i \in I \rbrace \cup \lbrace \mathbf{S}_j^\omega: j \in J \rbrace); \\
				(I, \emptyset,\lbrace 0 \rbrace) & \mapsto \mathsf{V}(\lbrace \mathbf{S}_i: i \in I \rbrace \cup \lbrace \mathbf{S}^\omega \rbrace).
			\end{aligned}	
		\end{equation*}
	\end{enumerate}
\end{theorem}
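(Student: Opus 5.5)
Since this theorem is quoted from \cite{AGLIANO2002}, the plan is to reconstruct it from the classification of varieties of MV-algebras (Komori, Di Nola--Lettieri) and of cancellative hoops. The basic tool is that every Wajsberg hoop is a subdirect product of totally ordered Wajsberg hoops, so every subvariety is generated by its chains. By Proposition~\ref{prop:to_wajsberg_is_semi_cancellative}, a nontrivial chain is either bounded or cancellative. Bounded Wajsberg chains are exactly the reducts of MV-chains, via $\neg x \coloneq x \imp \bot$ and $x \oplus y \coloneq \neg x \imp y$. Cancellative chains are negative cones of totally ordered abelian groups. The proof then splits into a cancellative part and a bounded part, which are glued together at the end.

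For (1), take a chain $\mathbf{C}$.
\begin{itemize}
\item If $\mathbf{C}$ is bounded, Chang's completeness theorem says $\mathbf{C}^+$ lies in the MV-variety generated by $[0,1]$. Taking reducts, $\mathbf{C} \in \mathsf{V}(\mathbf{S})$.
\item If $\mathbf{C}$ is cancellative, it is the negative cone of a totally ordered abelian group. Every totally ordered abelian group satisfies the universal theory of $\mathbb{Z}$, so the cancellative chains all lie in $\mathsf{V}(\mathbf{S}^\omega)$.
\item It remains to show $\mathbf{S}^\omega \in \mathsf{V}(\mathbf{S})$. I would show it lies in $\mathsf{ISP}_U(\{\mathbf{S}_n^\omega\})$: the initial segment $\{(0,k)\}$ of $\mathbf{S}_n^\omega$ near $\top$ is closed under $\cdot$ and $\imp$ and is isomorphic to $\mathbf{S}^\omega$. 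Each $\mathbf{S}_n^\omega$ in turn lies in $\mathsf{V}(\mathbf{S})$ by the MV case, since $\mathbf{S}_n^\omega$ is the reduct of the MV-chain $\Gamma(\mathbb{Z}\times_{\mathsf{lex}}\mathbb{Z},(n,0))$.
\end{itemize}

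For (2), let $\mathcal{V}$ be a proper subvariety. First I show $\mathcal{V}$ cannot contain bounded chains of unbounded ``rank''. Otherwise, using ultraproducts of $\mathbf{S}_n$ for arbitrarily large $n$, or of $\mathbf{S}_n^\omega$, one recovers a dense chain and hence $\mathbf{S}$ via Chang's theorem, contradicting properness.

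So the bounded chains in $\mathcal{V}$ satisfy an identity of the form $(n+1)x \approx nx$ in the MV-expansion. Komori's analysis then applies: every such chain generates the same variety as a finite set of algebras $\mathbf{S}_i$ and $\mathbf{S}_j^\omega$. It uses the invariants $\mathrm{rank}$ and the existence of infinitesimals: an MV-chain of finite rank $m$ is either $\mathbf{S}_m$ or, if it has infinitesimals, is in $\mathsf{V}(\mathbf{S}_m^\omega)$ and generates it.

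Next I read off $K$. If $\mathcal{V}$ contains a nontrivial cancellative chain, then $\mathbf{S}^\omega \in \mathcal{V}$, giving $K=\{0\}$. In that case each $\mathbf{S}_j^\omega$ is redundant: it lies in $\mathsf{V}(\mathbf{S}_j, \mathbf{S}^\omega)$ because, as a hoop, it is a subdirect-style combination of its quotient $\mathbf{S}_j$ and its cancellative radical. This explains the condition ``$K=\{0\}$ implies $J=\emptyset$''. Divisibility redundancies come from the embeddings $\mathbf{S}_i \leq \mathbf{S}_{i'}$ and $\mathbf{S}_i^\omega \leq \mathbf{S}_{i'}^\omega$ when $i \mid i'$, together with $\mathbf{S}_i \in \mathsf{V}(\mathbf{S}_i^\omega)$. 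Removing them gives a reduced triple, and this proves surjectivity of the correspondence.

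Injectivity is the step I expect to be the main obstacle, because the absence of negation means the MV separating identities cannot be reused verbatim. For distinct reduced triples I must exhibit a hoop identity holding in one variety and failing in the other. By Jónsson's lemma this amounts to checking which chains are in $\mathsf{HSP}_U$ of the finite generating set. The chains of $\mathsf{HSP}_U$ of the generators are subalgebras and homomorphic images of $\mathbf{S}_i$ and $\mathbf{S}_j^\omega$ and of $\mathbf{S}^\omega$, including the unbounded proper filters: the cancellative radicals are isomorphic to $\mathbf{S}^\omega$. Reducedness then ensures that the generator sets differ in a chain lying in one variety but not the other, for example $\mathbf{S}_i$ with $i$ not dividing any index on the other side. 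The delicate cases are those involving $\mathbf{S}^\omega$ versus $\mathbf{S}_j^\omega$, where one must check that $\mathbf{S}_j^\omega \notin \mathsf{V}(\{\mathbf{S}_i : i \in I\}\cup\{\mathbf{S}^\omega\})$ whenever $j$ divides no $i\in I$. I would first try to prove this by showing that the bounded chains of the latter variety have ranks dividing some $i \in I$.
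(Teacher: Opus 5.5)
The paper gives no proof of this theorem; it is quoted from \cite{AGLIANO2002}. Your reconstruction therefore has to stand on its own, and it breaks at the step where you read off $K$.

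You claim that each $\mathbf{S}_j^\omega$ lies in $\mathsf{V}(\mathbf{S}_j,\mathbf{S}^\omega)$ as a ``subdirect-style combination'' of $\mathbf{S}_j$ and its cancellative part. This is false. The only filters of $\mathbf{S}_j^\omega$ are $\{\top\}$, $F=\{(j,-k): k\geq 0\}$ and $S_j^\omega$, so $\mathbf{S}_j^\omega$ is subdirectly irreducible. Since $\mathcal{WH}$ has lattice terms, it is congruence distributive, and Jónsson's lemma would then place $\mathbf{S}_j^\omega$ in $\mathsf{HSP}_U(\{\mathbf{S}_i: i\in I\}\cup\{\mathbf{S}^\omega\})$. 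An ultraproduct over this finite family is isomorphic either to some finite $\mathbf{S}_i$ or to an ultrapower of $\mathbf{S}^\omega$, which is cancellative. Both $\mathsf{H}$ and $\mathsf{S}$ preserve finiteness and cancellativity, whereas $\mathbf{S}_j^\omega$ is infinite and not cancellative. Hence $\mathbf{S}_j^\omega\notin\mathsf{V}(\{\mathbf{S}_i: i\in I\}\cup\{\mathbf{S}^\omega\})$ for every $I$.

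The redundancy actually runs the other way. $\mathbf{S}^\omega\cong F$ is a subalgebra of $\mathbf{S}_j^\omega$; this is the fact you intend to use in part (1). So when $J\neq\emptyset$ it is the cancellative generator that is superfluous, which is why reduced triples with $K=\{0\}$ have $J=\emptyset$. Every variety containing some $\mathbf{S}_j^\omega$ also contains $\mathbf{S}^\omega$, so your rule sends, for example, $\mathsf{V}(\mathbf{S}_1^\omega)$ to the triple $(\{1\},\emptyset,\{0\})$. That triple's variety $\mathsf{V}(\mathbf{S}_1,\mathbf{S}^\omega)$ is strictly smaller. Your surjectivity argument therefore fails for every variety containing an infinite bounded chain. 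The correct rule is: $K=\{0\}$ exactly when $\mathcal{V}$ contains a nontrivial cancellative chain but no infinite bounded chain.

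The same misconception affects your injectivity plan. You propose to separate $\mathbf{S}_j^\omega$ from $\mathsf{V}(\{\mathbf{S}_i: i\in I\}\cup\{\mathbf{S}^\omega\})$ using ranks, and only when $j$ divides no $i\in I$. Ranks cannot do this, because $\mathbf{S}_j$ and $\mathbf{S}_j^\omega$ have the same rank. The Jónsson argument above does it unconditionally: every nontrivial chain in that variety is finitely subdirectly irreducible, hence finite or cancellative.

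There are also three smaller issues.
\begin{enumerate}
\item Properness does not give an identity $(n+1)x\approx nx$ on the bounded chains of $\mathcal{V}$. That identity fails in every $\mathbf{S}_j^\omega$ at $x=(0,1)$, so it would exclude all $\mathbf{S}_j^\omega$. What properness gives is only a uniform bound on the $m$ such that $\mathbf{S}_m$ is the simple quotient of a bounded chain in $\mathcal{V}$.
\item In part (1), the copy of $\mathbf{S}^\omega$ inside $\mathbf{S}_n^\omega$ is the subalgebra $\{(n,-k): k\geq 0\}$, so no ultraproduct is needed. It is not $\{(0,k)\}$, which does not contain $\top$ and is not closed under $\imp$.
\item The paper's requirement $I\cup J\cup K\neq\emptyset$ leaves out the trivial variety, so the correspondence should be read as one for nontrivial proper subvarieties.
\end{enumerate}
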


A \emph{basic hoop} \cite{AGLIANO2007} is a hoop which satisfies the identity
\begin{equation*}
	((x \imp y) \imp z) \imp (((y \imp x) \imp z) \imp z) \approx \top.
\end{equation*}
Let $\mathcal{BH}$ denote the variety of basic hoops. The finitely subdirectly irreducible members of $\mathcal{BH}$ are the nontrivial totally ordered hoops  \cite[Proposition~1.8]{AGLIANO2007}. Particularly, $\mathcal{WH}$ is a subvariety of $\mathcal{BH}$. Given a variety $\mathcal{V}$ of basic hoops, let $\mathcal{V}_{\textnormal{fsi}}$ denote the class of all finitely subdirectly irreducible members of $\mathcal{V}$.

A \emph{BL-algebra} is a bounded basic hoop. On the other hand, an \emph{MV-algebra} is a bounded Wajsberg hoop. The varieties of BL-algebras and MV-algebras will be denoted by $\mathcal{BL}$ and $\mathcal{MV}$, respectively. Clearly, $\mathcal{MV}$ is a subvariety of $\mathcal{BL}$. As is the case with basic hoops, the finitely subdirectly irreducible members of $\mathcal{BL}$ are the nontrivial BL-chains, that is, the nontrivial totally ordered BL-algebras. Given a variety $\mathcal{V}$ of BL-algebras, we also let $\mathcal{V}_{\textnormal{fsi}}$ denote the class of all finitely subdirectly irreducible members of $\mathcal{V}$.

Let $\langle I, \leq \rangle$ be a total order. For each $i \in I$ let $\mathbf{A}_i \coloneq \langle A_i, \cdot_i, \imp_i, \top \rangle$ be a hoop such that $A_i \cap A_j = \lbrace \top \rbrace$ for all $i,j \in I$, $i \neq j$. The \emph{ordinal sum} is the hoop $\bigoplus_{i \in I} \mathbf{A}_i \coloneq \langle \bigcup_{i \in I} A_i, \cdot, \imp, \top \rangle$, where the operations are defined as follows:
\begin{equation*}
	x \cdot y \coloneq
	\begin{cases}
		x \cdot_i y & \text{if $x,y \in A_i$}; \\
		x & \text{if $x \in A_i \backslash \lbrace \top \rbrace$, $y \in A_j$, $i < j$}; \\
		y & \text{if $y \in A_i \backslash \lbrace \top \rbrace$, $x \in A_j$, $i < j$};
	\end{cases}
\end{equation*}
and
\begin{equation*}
	x \imp y \coloneq
	\begin{cases}
		\top & \text{if $x \in A_i \backslash \lbrace \top \rbrace$, $y \in A_j$, $i < j$}; \\
		x \imp_i y & \text{if $x,y \in A_i$}; \\
		y & \text{if $y \in A_i$, $x \in A_j$, $i < j$}.
	\end{cases}
\end{equation*}
Each $\mathbf{A}_i$ will be called \emph{component} of $\bigoplus_{i \in I} \mathbf{A}_i$. Ordinal sums play a key role in the description of totally ordered hoops. The following decomposition is a well-known result in the theory of hoops (see, for example, \cite[Theorem~3.7]{AGLIANO2003}).

\begin{theorem}\label{thm:totally_ordered_hoops_decomposition}
	For each nontrivial totally ordered hoop $\mathbf{A}$ there exists a total order $\langle I, \leq \rangle$ and a set of nontrivial totally ordered Wajsberg hoops $\lbrace \mathbf{A}_i: i \in I \rbrace$ such that $A_i \cap A_j = \lbrace \top \rbrace$ for all $i \neq j$ and $\mathbf{A} = \bigoplus_{i \in I} \mathbf{A}_i$. Conversely, if $\langle I, \leq \rangle$ is a total order and $\lbrace \mathbf{A}_i: i \in I \rbrace$ is a set of nontrivial totally ordered Wajsberg hoops such that $A_i \cap A_j = \lbrace \top \rbrace$ for all $i \neq j$, then $\bigoplus_{i \in I} \mathbf{A}_i$ is a nontrivial totally ordered hoop.
\end{theorem}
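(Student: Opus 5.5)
We must prove Theorem~\ref{thm:totally_ordered_hoops_decomposition}: every nontrivial totally ordered hoop is an ordinal sum of nontrivial totally ordered Wajsberg hoops, and conversely every such ordinal sum is a nontrivial totally ordered hoop.

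\textbf{Converse direction.} This part is a direct verification, and I would do it first. The monoid laws and $x \imp x = \top$ hold componentwise. Associativity across components reduces to the observation that the product of elements from different components is the element in the lower component.

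The identities $x(x\imp y)=y(y\imp x)$ and $x\imp(y\imp z)=(xy)\imp z$ are then checked by cases on the relative positions of the components containing $x,y,z$. When all three lie in one component, the hoop axioms of that component apply. Otherwise the definitions force both sides to equal the element of lowest index, or $\top$.

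Finally, $x\imp y=\top$ holds iff either $x,y$ lie in one component with $x\le_i y$, or $x$ lies in a strictly lower component (with $x\neq\top$) than $y$. This relation is a total order because each component is a chain and $I$ is totally ordered. Nontriviality is inherited from the components.

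\textbf{Main direction: defining the components.} Let $\mathbf A$ be a nontrivial chain. I would define the components as maximal Wajsberg ``blocks'' via the relation $a\sim b$ for $a,b\ne\top$, defined by
\[
(a\imp b)\imp b = (b\imp a)\imp a = a\lor b ,
\]
that is, the Wajsberg identity holds for the pair $\{a,b\}$. Equivalently, taking $a\le b$, the condition is $(b\imp a)\imp a=b$.

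\textbf{Step 1: $\sim$ is an equivalence whose classes are convex.} Reflexivity is clear and symmetry is built into the definition. For convexity, suppose $a\le c\le b$ and $(b\imp a)\imp a = b$. Using monotonicity (Proposition~\ref{prop:hoops_properties}(2)) and item (7) of that proposition, we should get $(c\imp a)\imp a=c$ and $(b\imp c)\imp c=b$.

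Transitivity will rest on the key lemma that in a chain $(b\imp a)\imp a$ is either $b$ or $\top$ when $a<b$. This is the standard dichotomy, and I expect it to be the main obstacle. I would attempt it using divisibility $a=b(b\imp a)$ and the chain property, showing that $u=(b\imp a)\imp a$ satisfies $u(b\imp a)=a$, so that $u\le b$ unless $u=\top$. This is where cancellation-type arguments on chains are needed.

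\textbf{Step 2: each class is a Wajsberg hoop.} Adjoin $\top$ to each class to form $A_i$. Then show that $A_i$ is closed under $\cdot$ and $\imp$, hence a subhoop. By construction $A_i$ satisfies the Wajsberg identity, so it is a totally ordered Wajsberg hoop.

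\textbf{Step 3: the operations between classes are those of the ordinal sum.} Order the classes by the order of $\mathbf A$; this is well defined by convexity. For $a$ in a lower class and $b$ in a higher one, we have $(b\imp a)\imp a=\top$ by the dichotomy lemma. From this, $b\imp a=a$: since $a\le b\imp a$, if $a<b\imp a$ then $b\imp a$ would lie in $a$'s class, contradicting the dichotomy. Divisibility then gives
\[
ab = b\wedge\ldots = a ,
\]
more precisely $a = b(b\imp a) = ba$. Finally, $a\imp b=\top$ holds since $a\le b$.

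These interactions are exactly the ones defining $\bigoplus_{i\in I}\mathbf A_i$, which completes the argument. The delicate part, once more, is the dichotomy lemma in Step 1.
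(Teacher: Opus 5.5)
The paper does not prove this theorem; it cites \cite[Theorem~3.7]{AGLIANO2003}. Your direct construction of the components as $\sim$-classes is therefore a legitimate self-contained route, and its architecture is sound: once you know that $(b\imp a)\imp a\in\{b,\top\}$ for $a<b$ in a chain, the classes are exactly the Wajsberg components and Steps~2 and~3 go through. But this dichotomy is the entire content of the theorem, and you leave it unproved. Worse, the route you sketch for it cannot work. The equation $u\cdot(b\imp a)=a=b\cdot(b\imp a)$ with $b\le u$ does not single out $u$, and no cancellation is available in an arbitrary chain: semi-cancellativity (Proposition~\ref{prop:to_wajsberg_is_semi_cancellative}) holds for \emph{Wajsberg} chains, which is what you are trying to reach. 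Concretely, take the four-element G\"odel chain $0<p<q<\top$ with $x\cdot y=\min\{x,y\}$, and set $a=0$, $b=p$. Then $b\imp a=0$ and $q\cdot(b\imp a)=a$, although $b<q<\top$. Step~1 has the same defect. Monotonicity only gives $c\le(c\imp a)\imp a\le b$, and equality needs the dichotomy. Neither the upper half $(b\imp c)\imp c=b$ of convexity nor transitivity in the configuration $a<b<c$ follows from monotonicity and item~(7).

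What closes the gap is an absorption argument based on residuation rather than cancellation. Let $a<b<\top$, put $e=b\imp a$ (so $e<\top$) and $u=e\imp a$; then $b\le u$, and $u\imp a=e$ by item~(7). Suppose $b<u<\top$ and put $d=u\imp b$, so $d<\top$ and $u\cdot d=b$. Then $e=(u\cdot d)\imp a=d\imp(u\imp a)=d\imp e$, which in a chain forces $e<d$ and $d\cdot e=d\cdot(d\imp e)=e$. Hence $u=(d\cdot e)\imp a=d\imp(e\imp a)=d\imp u$, which forces $u<d$ and $d\cdot u=u$, so $b=u\cdot d=u$, a contradiction. The same manipulations complete Step~1. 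First, suppose $a\sim b$, $b\sim c$, $a<b<c$ but $a\not\sim c$, i.e.\ $c\imp a=a$. Then $c\imp b=c\imp((b\imp a)\imp a)=(b\imp a)\imp(c\imp a)=(b\imp a)\imp a=b$, contradicting $b\sim c$. Second, suppose $a<c<b$ and $a\sim b$, so that $(c\imp a)\imp a=c$ by the lower half, and suppose $b\imp c=c$. Then $c=b\imp((c\imp a)\imp a)=(c\imp a)\imp(b\imp a)$. This forces $b\imp a<c\imp a$ and $b\imp a=(c\imp a)\cdot c=a$, so $(b\imp a)\imp a=\top\neq b$, a contradiction. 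Finally, closure of a class under $\imp$ follows directly from item~(7), and closure under $\cdot$ follows from $y\le x\imp(x\cdot y)$ together with the dichotomy. With these additions your proof is complete.
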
	

\begin{remark}\label{rmk:BL_chains_decomposition}
	Since every BL-chain is a totally ordered hoop, we have the following version of Theorem~\ref{thm:totally_ordered_hoops_decomposition} for BL-chains: a bounded hoop $\mathbf{A}$ is a nontrivial BL-chain if and only if there exists a total order $\langle I, \leq\rangle$ with least element $0$ and a set of nontrivial totally ordered Wajsberg hoops $\lbrace \mathbf{A}_i: i \in I \rbrace$ such that $A_i \cap A_j = \lbrace \top \rbrace$ for all $i \neq j$, $\mathbf{A}_0$ has least element and $\mathbf{A} = (\bigoplus_{i \in I} \mathbf{A}_i)^+$.
\end{remark}

Given $\lbrace \mathbf{A}_i: i \in I \rbrace$ a set of hoops, it is always possible to find another set of hoops $\lbrace \mathbf{B}_i: i \in I \rbrace$ such that $\mathbf{A}_i$ is isomorphic to $\mathbf{B}_i$ and $B_i \cap B_j = \lbrace \top \rbrace$ for all $i,j \in I$, $i \neq j$. Therefore, for simplicity, we can always safely assume that the condition $A_i \cap A_j = \lbrace \top \rbrace$ for all $i \neq j$ is true. As an example, the structures $\mathbf{S}_n \oplus \mathbf{S}_n$ and $\mathbf{S}_n \oplus \mathbf{S}^\omega$ are going to be considered well-defined totally ordered hoops. Furthermore, if $\mathbf{A}$ is a nontrivial totally ordered hoop (BL-chain) and we write $\mathbf{A} = \bigoplus_{i \in I} \mathbf{A}_i$ ($\mathbf{A} = (\bigoplus_{i \in I} \mathbf{A}_i)^+$), we assume in the sequel that the total order $\langle I,\leq \rangle$ and the familiy $\lbrace \mathbf{A}_i: i \in I \rbrace$ are those given by Theorem~\ref{thm:totally_ordered_hoops_decomposition} (Remark~\ref{rmk:BL_chains_decomposition}).

We continue this section of preliminaries with some facts about one generated terms in the language $\lbrace \cdot,\imp,\top \rbrace$ and $\lbrace \cdot,\imp,\bot,\top \rbrace$. 
\begin{proposition} $ $
	\begin{enumerate}[label=\rm{(\arabic*)}]
		\item If $t(x)$ is a term in the language $\lbrace \cdot,\imp,\top \rbrace$, then $\mathcal{H} \vDash t(\top) \approx \top$.
		\item If $t(x)$ is a term in the language $\lbrace \cdot,\imp,\bot,\top \rbrace$, then there exist $c,d \in \lbrace \bot,\top \rbrace$ such that $\mathcal{H}_\bot \vDash t(\bot) \approx c$ and $\mathcal{H}_\bot \vDash t(\top) \approx d$.
	\end{enumerate}
\end{proposition}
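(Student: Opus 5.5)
Both parts follow by structural induction on the term $t(x)$, carried out uniformly in an arbitrary (bounded) hoop $\mathbf{A}$. Since the claimed identities hold in every member of the variety, it suffices to fix $\mathbf{A}$ and show by induction that $t^{\mathbf{A}}(\top)=\top$ for part (1), and that $t^{\mathbf{A}}(\bot)$ and $t^{\mathbf{A}}(\top)$ are constants in $\{\bot,\top\}$ for part (2). For part (2) these constants must be chosen from the syntax of $t$ alone, not from $\mathbf{A}$. So we strengthen the induction: we assign to each term $t$ a pair $(c_t,d_t)\in\{\bot,\top\}^2$ by recursion on $t$, and prove that $t^{\mathbf{A}}(\bot)=c_t$ and $t^{\mathbf{A}}(\top)=d_t$ for every bounded hoop $\mathbf{A}$.

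For (1), the base cases are $t=x$ and $t=\top$, where $t(\top)=\top$ trivially. For the inductive steps, if $t=t_1\cdot t_2$ then $t(\top)=\top\cdot\top=\top$ because $\top$ is the monoid unit. If $t=t_1\imp t_2$ then $t(\top)=\top\imp\top=\top$ by the axiom $x\imp x\approx\top$.

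For (2), the base cases are $x$ (giving the pair $(\bot,\top)$), $\bot$ (giving $(\bot,\bot)$) and $\top$ (giving $(\top,\top)$). For the inductive steps it is enough to check that $\{\bot,\top\}$ is closed under $\cdot$ and $\imp$ in every bounded hoop, with results independent of $\mathbf{A}$. For multiplication, $\top$ is the unit, so $\top\cdot\top=\top$ and $\top\cdot\bot=\bot\cdot\top=\bot$. By Proposition~\ref{prop:hoops_properties}(3), $\bot\cdot\bot\le\bot$, hence $\bot\cdot\bot=\bot$. For implication, $\bot\imp\bot=\top\imp\top=\top$ by the axiom $x\imp x\approx\top$. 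Since $\bot\le\top$, the definition of the order gives $\bot\imp\top=\top$. Finally, $\top\imp\bot=\bot$ by Proposition~\ref{prop:hoops_properties}(1). We then define $c_{t_1\ast t_2}\coloneq c_{t_1}\ast c_{t_2}$ according to this table, and likewise for $d$. Applying the induction hypothesis together with the table gives the claim for $t_1\ast t_2$, and hence the two identities $\mathcal{H}_\bot\vDash t(\bot)\approx c_t$ and $\mathcal{H}_\bot\vDash t(\top)\approx d_t$.

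The only real subtlety is the uniformity point already built into the strengthened statement: the constants must not depend on $\mathbf{A}$. This holds because the operation tables on $\{\bot,\top\}$ computed above are the same in every bounded hoop. In the trivial bounded hoop, where $\bot=\top$, the identities hold automatically.
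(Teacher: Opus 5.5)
Your proof is correct. The paper states this proposition without proof as a standard fact, and your structural induction is the intended routine argument, in the same style as the induction in the proof of Lemma~\ref{lemma:BLterm_description}. Fixing the constants by recursion on the syntax, through operation tables on $\{\bot,\top\}$ that come from $x\imp x\approx\top$, the unit law and Proposition~\ref{prop:hoops_properties}(1),(3), properly secures the required uniformity over all bounded hoops.
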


In view of the last result and with the aim of simplifying the notation, given a term $t(x)$ in the language $\lbrace \cdot,\imp,\top \rbrace$ or $\lbrace \cdot,\imp,\bot,\top \rbrace$, we are sometimes going to write $t(c) \approx d$ with $c,d \in \lbrace \bot,\top \rbrace$, to indicate that $\mathbf{A} \vDash t(c) \approx d$ for each hoop or bounded hoop $\mathbf{A}$ (as appropriate).

The next result is a well-known fact about terms in the language $\lbrace \cdot,\imp,\bot,\top \rbrace$, in the context of MV-algebras (see \cite[Lemma 1]{AGUZZOLI2010}).

\begin{proposition}\label{prop:MVterm_description}
	If $t(x)$ is a term in the language $\lbrace \cdot,\imp,\bot,\top \rbrace$ and $t(\top) \approx \top$, then there exists a term $\overline{t}(x)$ in the language $\lbrace \cdot,\imp,\top \rbrace$, which can be effectively calculated, such that $\mathbf{S}^+ \vDash t(x) \approx \overline{t}(x)$ (equiv. $\mathcal{MV} \vDash t(x) \approx \overline{t}(x)$).
\end{proposition}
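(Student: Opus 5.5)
We must prove: if $t(x)$ is a term in $\lbrace \cdot,\imp,\bot,\top \rbrace$ with $t(\top) \approx \top$, then there is an effectively computable term $\overline{t}(x)$ in $\lbrace \cdot,\imp,\top \rbrace$ with $\mathbf{S}^+ \vDash t(x) \approx \overline{t}(x)$.

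The plan is to go through McNaughton functions. Since $\mathcal{MV} = \mathsf{V}(\mathbf{S}^+)$ by Chang's completeness theorem, the two formulations are equivalent. The term $t$ induces a function $f_t\colon [0,1]\to[0,1]$ on $\mathbf{S}^+$. This function is continuous and piecewise linear with finitely many pieces, each piece having integer coefficients, and the pieces can be read off effectively from $t$ by structural induction. The hypothesis $t(\top)\approx\top$ gives $f_t(1)=1$. Every $\bot$-free term $s(x)$ likewise induces such a function, again with $f_s(1)=1$. So the task reduces to realizing an arbitrary McNaughton function $f$ with $f(1)=1$ by a $\bot$-free term, effectively.

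\textbf{Step 1: local building blocks.} Write $f$ as a max of mins of linear pieces $\ell_k(x)=m_kx+b_k$ with $m_k,b_k\in\mathbb{Z}$; this is the usual McNaughton/Mundici lattice representation, and it is computable. Each truncated piece $(\ell_k\lor 0)\land 1$ must become a $\bot$-free term. The $\bot$-free fragment has truncated sum $x\oplus y$ only in restricted forms, whereas the ordinary MV-construction uses $\neg x = x\imp\bot$. The point is that $\bot$ is only ever needed in the form $\neg$ applied to something. In $\mathbf{S}^+$ the constant $\bot$ can be simulated on the region where it matters by powers of $x$: since $x^n\to 0$ pointwise except at $x=1$, we cannot use $x^n$ globally. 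We can, however, use the piece structure instead.

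\textbf{Step 2: a term-rewriting argument, which I would actually pursue.} Induct on $t$ directly. Let $t_\bot$ and $t_\top$ denote the values of $t$ at $\bot$ and $\top$, which are constants by the earlier proposition. The claim to prove inductively is stronger than needed. For every term $t$ there is a $\bot$-free term $u$ such that:
\begin{itemize}
\item if $t(\top)\approx\top$, then $t\approx u$ on $\mathbf{S}^+$;
\item if $t(\top)\approx\bot$, then $t\approx \neg u$ on $\mathbf{S}^+$.
\end{itemize}
The variable $x$ and $\top$ are of the first kind, and $\bot=\neg\top$ is of the second. For products and implications, combine the cases using MV identities that hold in $\mathbf{S}^+$.
\begin{itemize}
\item $u\cdot\neg v = \neg(u\imp v)$.
\item $\neg u\cdot\neg v=\neg(u\oplus v)$, where $u\oplus v=\neg u\imp v$. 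This needs care, since $\oplus$ itself uses $\neg$. Instead I would note that $\neg u\cdot \neg v=\neg(\neg u \imp v)$ and $\neg u\imp v = (u\imp v)\imp v$ fails in general, so this is the delicate case; see below.
\item $u\imp\neg v=\neg(u\cdot v)$.
\item $\neg u\imp v = u\oplus v$.
\item $\neg u\imp \neg v = v\imp u$.
\end{itemize}

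\textbf{Main obstacle.} The cases producing $u\oplus v$, or $\neg$ of it, with $u,v$ $\bot$-free are the crux, because $\oplus$ is not $\bot$-free-definable on $[0,1]$ in general. For example, $x\oplus x$ equals $1$ at $x=1/2$, while every one-variable $\bot$-free term is bounded by a function vanishing only where it must. Here, though, $u$ and $v$ are unary with $u(1)=v(1)=1$. I would therefore first use the McNaughton description to show that a unary McNaughton function $g$ with $g(1)=1$ is $\bot$-free-definable. The argument writes $g=\bigvee\bigwedge$ of pieces, and on each piece with nonnegative slope near $1$ builds terms like $x^{m}\imp x^{n}$ and truncated sums of the form $(x^a\imp x^b)$. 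These realize $\min(1,\max(0,(b-a)(x-1)+1))$-type functions, which suffice to generate all such $g$ via $\land$ and $\lor$ (which are $\bot$-free in $\mathcal{WH}$). Then $u\oplus v$, which has value $1$ at $1$, is handled by applying this to its McNaughton function. The delicate verification is that the generators $x^a\imp x^b$, together with lattice operations and products, exhaust all McNaughton functions fixing $1$. I expect this to require the Schauder-hat style approximation argument of Mundici, adapted with the anchor at $x=1$, and effectiveness follows because every step is a finite computation on the piecewise-linear data.
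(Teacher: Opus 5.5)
\textbf{There is a genuine gap at the crux.} Your opening reduction is correct: $t^{\mathbf{S}^+}$ is a McNaughton function with value $1$ at $1$. The statement therefore follows at once from the hoop half of Theorem~\ref{thm:mcnaughton} (Aglian\`o--Panti). The paper does not prove the proposition at all; it cites it as a known lemma, and it is immediate from that theorem.

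You do not invoke that result, however. You try to establish it, and the attempt does not close:
\begin{itemize}
\item Step~1 is abandoned.
\item In Step~2 you declare the $\oplus$-cases an obstruction and fall back on the claim that every unary McNaughton function fixing $1$ is $\bot$-free definable. That claim is not a lemma for your induction; it is the whole content of the proposition. Applied directly to $t^{\mathbf{S}^+}$ it finishes the proof and makes the induction redundant.
\item You leave that claim at ``I expect this to require the Schauder-hat argument.''
\item Your generator family is described incorrectly. For $1\le a<b$, $x^a\imp x^b$ equals $1$ on $[0,\frac{a-1}{a}]$ and $a(1-x)$ on $[\frac{a-1}{a},\frac{b-1}{b}]$. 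So it is not of the form $\min(1,\max(0,(b-a)(x-1)+1))$, and you give no argument that such functions generate everything.
\end{itemize}
As written, the key step is unproved.

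\textbf{The obstruction you identified is not real, and Step~2 would be a complete proof without it.} Your claim that $\oplus$ is not $\bot$-free definable is false. In every MV-algebra $x\imp(x\cdot y)=\neg x\oplus(x\cdot y)=\neg x\lor y$, hence
\[
(x\imp(x\cdot y))\imp y=(\neg x\imp y)\land(y\imp y)=x\oplus y .
\]
For instance, $x\oplus x\approx(x\imp x^2)\imp x$, which is $\min(1,2x)$ on $[0,1]$; this refutes your example.

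So the two problematic cases become
\[
\neg u\imp v\approx(u\imp u\cdot v)\imp v
\quad\text{and}\quad
\neg u\cdot\neg v\approx\neg((u\imp u\cdot v)\imp v).
\]
Your other identities are correct: $u\cdot\neg v\approx\neg(u\imp v)$, $u\imp\neg v\approx\neg(u\cdot v)$ and $\neg u\imp\neg v\approx v\imp u$. The base case $\bot=\neg\top$ is also fine.

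With this one identity, your strengthened induction hypothesis ($t\approx u$ or $t\approx\neg u$ according to the value of $t(\top)$) goes through for all sign combinations of products and implications. The recursion is plainly effective, and you get the result without any McNaughton theory. That is a more elementary route than going through Theorem~\ref{thm:mcnaughton} or the cited lemma.
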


For this work we are going to need a result similar to the previous one, but in the setting of BL-algebras. As we are going to show next, we can formulate a similar property for BL-chains if it is the case that $t(\top) \approx \top$. We start with some preliminaries.

\begin{lemma}\label{lemma:BLterm_description}
	If $t(x)$ is a term in the language $\lbrace \cdot,\imp,\bot,\top \rbrace$, then there exists a term $\overline{t}(x)$ such that:
	\begin{enumerate}[label=\rm{(\arabic*)}]
		\item $\overline{t}(x) = \bot$ or $\overline{t}(x)$ is a term in the language $\lbrace \cdot,\imp, \top \rbrace$.
		\item $\ensuremath{\left. t^\mathbf{A} \right|_{A_i}} = \ensuremath{\left. \overline{t}^\mathbf{A} \right|_{A_i}}$ for all $\mathbf{A} \coloneq (\bigoplus_{i \in I} \mathbf{A}_i)^+ \in \mathcal{BL}_{\textnormal{fsi}}$ and $i \in I \backslash \lbrace 0 \rbrace$.
	\end{enumerate}
\end{lemma}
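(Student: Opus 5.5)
Induction on the complexity of $t(x)$, building $\overline{t}$ explicitly and checking clause (2) pointwise: fix a BL-chain $\mathbf{A} = (\bigoplus_{i \in I} \mathbf{A}_i)^+$, an index $i \neq 0$ and $a \in A_i$. Two facts about the ordinal sum drive everything. First, since $i > 0$ and $\bot \in A_0 \setminus \{\top\}$, we have $\bot < a$. Second, $A_i$ is a subuniverse of the hoop reduct, so every term in the language $\lbrace \cdot,\imp,\top \rbrace$ sends $a$ to an element of $A_i$. Thus along the induction the relevant values lie either in $A_i$ or equal $\bot$, and the ordinal-sum multiplication and implication tables tell us exactly how these mix.

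For the base cases we take $\overline{x} \coloneq x$, $\overline{\top} \coloneq \top$ and $\overline{\bot} \coloneq \bot$. For the inductive step, suppose $t = r \cdot s$ or $t = r \imp s$, and that $\overline{r}$, $\overline{s}$ have already been constructed. Each of them is either $\bot$ or a $\lbrace\cdot,\imp,\top\rbrace$-term, so on $A_i$ its value is either constantly $\bot$ or lies in $A_i$. We define $\overline{t}$ by four cases.
\begin{enumerate}[label=(\roman*)]
\item If both $\overline{r}$ and $\overline{s}$ are hoop terms, set $\overline{t} \coloneq \overline{r}\cdot\overline{s}$ or $\overline{r}\imp\overline{s}$ respectively.
\item If $\overline{r} = \bot$, then $\bot \cdot \overline{s} = \bot$ and $\bot \imp \overline{s} = \top$, so set $\overline{t} \coloneq \bot$ or $\overline{t} \coloneq \top$.
\item If $\overline{s} = \bot$ and $\overline{r}$ is a hoop term, then $\overline{r}\cdot\bot = \bot$, so set $\overline{t} \coloneq \bot$ in the product case. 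In the implication case we need $u \imp \bot$ for $u \in A_i$. If $u = \top$ this is $\bot$. If $u \in A_i\setminus\{\top\}$, then $\bot \in A_0$ with $0 < i$, and the ordinal-sum table gives $u \imp \bot = \bot$. Hence $\overline{t} \coloneq \bot$.
\item If both are $\bot$, use $\bot\cdot\bot = \bot$ and $\bot\imp\bot = \top$.
\end{enumerate}
Each case is a direct verification on elements of $A_i \cup \{\bot\}$, using only the definition of the ordinal-sum operations together with the fact that $\top$ is the common unit. The construction is also plainly effective.

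The only delicate point is case (iii), $u \imp \bot$ for $u \in A_i$. It needs both the hypothesis $i \neq 0$, so that $\bot$ lies in a strictly lower component, and the separate treatment of $u = \top$, where the ordinal-sum clause ``$x \in A_j\setminus\{\top\}$'' does not apply but $\top \imp \bot = \bot$ holds anyway. This is exactly where the lemma fails for $i = 0$: there $\neg u$ is computed inside $\mathbf{A}_0$, is in general neither $\bot$ nor expressible without $\bot$, and so that component has to be excluded.
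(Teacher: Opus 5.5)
Your proposal is correct and follows the paper's proof: the same induction on the complexity of $t(x)$, with the same case split. Under $\cdot$, a factor $\bot$ gives $\bot$. For $\imp$, a $\bot$ antecedent gives $\top$, a hoop-term antecedent with $\bot$ consequent gives $\bot$, and otherwise you keep the hoop term. Your explicit check that $u \imp \bot = \bot$ for $u \in A_i$ with $i \neq 0$ (including $u = \top$) fills in the verification the paper leaves implicit.
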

\begin{proof}
	The proof will be done using induction over the complexity of $t(x)$. If $t(x) \in \lbrace x,\bot,\top \rbrace$ the result is immediate. Now assume that $t(x) = t_1(x) \star t_2(x)$ for some $\star \in \lbrace \cdot,\imp \rbrace$. Using the inductive hypothesis, we obtain $\overline{t_1}(x)$ and $\overline{t_2}(x)$ terms such that, for each $k \in \lbrace 1,2 \rbrace$,
	\begin{enumerate}[label=-]
		\item $\overline{t_k}(x) = \bot$ or $\overline{t_k}(x)$ is a term in the language $\lbrace{\cdot,\imp,\top\rbrace}$ and
		\item $\ensuremath{\left. t_k^\mathbf{A} \right|_{A_i}} = \ensuremath{\left. \overline{t_k}^\mathbf{A} \right|_{A_i}}$ for all $\mathbf{A} \coloneq (\bigoplus_{i \in I} \mathbf{A}_i)^+ \in \mathcal{BL}_{\textnormal{fsi}}$ and $i \in I \backslash \lbrace 0 \rbrace$.
	\end{enumerate}
	
	If $\star = \cdot$, we define
	\begin{equation*}
		\overline{t}(x) \coloneq
		\begin{cases}
			\bot & \text{if $\overline{t_1}(x) = \bot$ or $\overline{t_2}(x) = \bot$;} \\
			\overline{t_1}(x) \cdot \overline{t_2}(x) & \text{if $\overline{t_1}(x)$ and $\overline{t_2}(x)$ are $\lbrace \cdot,\imp,\top \rbrace$-terms.}
		\end{cases}
	\end{equation*} 	
	
	On the other hand, if $\star = \imp$, we define	
	\begin{equation*}
		\overline{t}(x) \coloneq
		\begin{cases}
			\top & \text{if $\overline{t_1}(x) = \bot$;} \\
			\bot & \text{if $\overline{t_1}(x)$ is a $\lbrace \cdot,\imp,\top \rbrace$-term and $\overline{t_2}(x) = \bot$;} \\
			\overline{t_1}(x) \imp \overline{t_2}(x) & \text{if $\overline{t_1}(x)$ and $\overline{t_2}(x)$ are $\lbrace \cdot,\imp,\top \rbrace$-terms.}
		\end{cases}
	\end{equation*}
	
	In both cases, $\overline{t}(x)$ satisfies conditions (1) and (2).  
\end{proof}

One of the problems in the previous lemma is that it does not state anything about the interpretation of the term in the first component of the decomposition of a chain. This can be easily fixed if $t(\top) \approx \top$, combining Proposition~\ref{prop:MVterm_description} and Lemma~\ref{lemma:BLterm_description}. Recall that the first component of the decomposition of a BL-chain is a subalgebra that is an MV-algebra.

\begin{corollary}\label{cor:BL_terms}
	Let $t(x)$ be a term in the language $\lbrace \cdot,\to,\bot,\top \rbrace$. If $t(\top) \approx \top$, then there exist terms $s(x)$ and $u(x)$ in the language $\lbrace \cdot,\imp,\top \rbrace$ such that, for all $\mathbf{A} \coloneq (\bigoplus_{i \in I} \mathbf{A}_i)^+ \in \mathcal{BL}_{\mathrm{fsi}}$,
	\begin{enumerate}[label=\rm{(\arabic*)}]
		\item $\ensuremath{\left. t^\mathbf{A} \right|_{A_0}} = \ensuremath{\left. s^\mathbf{A} \right|_{A_0}} = s^\mathbf{A_0}$;
		\item $\ensuremath{\left. t^\mathbf{A} \right|_{A_i}} = \ensuremath{\left. u^\mathbf{A} \right|_{A_i}} = u^\mathbf{A_i}$ for all $i \in I \backslash \lbrace 0 \rbrace$;
	\end{enumerate}
	Particularly, $t^\mathbf{A}(A_i) \subseteq A_i$ for all $i \in I$.
\end{corollary}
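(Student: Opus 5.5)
Take $s(x) \coloneq \overline{t}(x)$ from Proposition~\ref{prop:MVterm_description} and $u(x) \coloneq \overline{t}(x)$ from Lemma~\ref{lemma:BLterm_description}, and check the two restriction claims separately. Fix a chain $\mathbf{A} \coloneq (\bigoplus_{i \in I} \mathbf{A}_i)^+ \in \mathcal{BL}_{\mathrm{fsi}}$.

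For (1), the point is that $A_0$ contains $\bot$ and $\top$ and is closed under $\cdot$ and $\imp$. Indeed, by the definition of the ordinal sum, if $x,y \in A_0$ the operations are computed in $\mathbf{A}_0$. Since $\mathbf{A}_0$ has a least element, which must be $\bot$, the algebra $\mathbf{A}_0^+$ is a subalgebra of $\mathbf{A}$ and is an MV-chain. Hence $t^{\mathbf{A}}|_{A_0} = t^{\mathbf{A}_0^+}$. By Proposition~\ref{prop:MVterm_description}, $\mathcal{MV} \vDash t \approx s$, so $t^{\mathbf{A}_0^+} = s^{\mathbf{A}_0^+}$. Since $s$ has no $\bot$, $s^{\mathbf{A}_0^+} = s^{\mathbf{A}_0}$, and this agrees with $s^{\mathbf{A}}|_{A_0}$ because $A_0$ is closed under the hoop operations.

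For (2), Lemma~\ref{lemma:BLterm_description} gives $\overline{t}$ agreeing with $t$ on every $A_i$ with $i \neq 0$, where $\overline{t}$ is either $\bot$ or a $\{\cdot,\imp,\top\}$-term. This dichotomy is the only real obstacle. We must rule out $\overline{t} = \bot$ using $t(\top) \approx \top$.

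The value $t^{\mathbf{A}}(\top)$ is not determined by the restriction to a component, since $\top$ lies in every $A_i$. The same holds for $\top \in A_i$ with $i \neq 0$. So take any BL-chain with a component $A_i$, $i \neq 0$; for instance $(\mathbf{S}_1 \oplus \mathbf{S}_1)^+$ works. Then $t^{\mathbf{A}}(\top) = \overline{t}^{\mathbf{A}}(\top)$. If $\overline{t} = \bot$, this would give $\top = \bot$, which is impossible in this nontrivial chain. Hence $\overline{t}$ is a $\{\cdot,\imp,\top\}$-term, and we set $u \coloneq \overline{t}$.

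Then $t^{\mathbf{A}}|_{A_i} = u^{\mathbf{A}}|_{A_i}$ holds by the lemma. Moreover $u^{\mathbf{A}}|_{A_i} = u^{\mathbf{A}_i}$, because $A_i$ is a subuniverse of the hoop reduct: operations between elements of the same component are computed inside that component.

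Finally, the ``particularly'' clause follows directly. On $A_0$ we have $t^{\mathbf{A}}(A_0) = s^{\mathbf{A}_0}(A_0) \subseteq A_0$, and on each $A_i$ with $i \neq 0$ we have $t^{\mathbf{A}}(A_i) = u^{\mathbf{A}_i}(A_i) \subseteq A_i$.
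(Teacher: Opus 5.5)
Your proof is correct and follows the paper's route: Proposition~\ref{prop:MVterm_description} applied to the MV-subalgebra $\mathbf{A}_0^+$, and Lemma~\ref{lemma:BLterm_description} applied to the components $A_i$ with $i \neq 0$. Your explicit use of $t(\top)\approx\top$ on a chain such as $(\mathbf{S}_1\oplus\mathbf{S}_1)^+$ to rule out $\overline{t}=\bot$ fills in a detail the paper leaves implicit. (The sentence saying $t^{\mathbf{A}}(\top)$ is ``not determined'' by a component is misworded, but the argument that follows it is sound.)
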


The conditions obtained in the previous corollary can be rewritten. The next equivalence will be an important tool for some of the results in this paper.

\begin{proposition}\label{prop:BLterms_equivalence}
	Let $t(x)$ be a term in the language $\lbrace \cdot,\imp,\bot,\top \rbrace$ such that $t(\top) \approx \top$. Let $s(x)$ and $u(x)$ be terms in the language $\lbrace \cdot,\imp,\top \rbrace$. For all $\mathbf{A} \coloneq (\bigoplus_{i \in I} \mathbf{A}_i)^+ \in \mathcal{BL}_{\textnormal{fsi}}$, the following are equivalent:
	\begin{enumerate}[label=\rm{(\roman*)}]
		\item $\ensuremath{\left. t^{\mathbf{A}} \right|_{A_0}} = \ensuremath{\left. s^{\mathbf{A}} \right|_{A_0}}$ and $\ensuremath{\left. t^{\mathbf{A}} \right|_{A_i}} = \ensuremath{\left. u^{\mathbf{A}} \right|_{A_i}}$ for all $i \in I \backslash \lbrace 0 \rbrace$;
		\item $\mathbf{A} \vDash t(x) \approx s(\neg\neg x) \cdot u(\neg\neg x \imp x)$.
	\end{enumerate}
\end{proposition}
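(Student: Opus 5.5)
The plan is to compute, for each element $a$ of a BL-chain $\mathbf{A} = (\bigoplus_{i \in I} \mathbf{A}_i)^+$, the values of $\neg\neg a$ and $\neg\neg a \imp a$. The equivalence then reduces to comparing $s(\neg\neg a) \cdot u(\neg\neg a \imp a)$ with the restrictions of $t$ componentwise. The basic computation, using the definition of the ordinal sum, is as follows. Since $\bot \in A_0$, for $a \in A_i$ with $i \neq 0$ and $a \neq \top$ we have $\neg a = a \imp \bot = \bot$ (the case $y \in A_0$, $x \in A_i$, $0<i$). Hence $\neg\neg a = \bot \imp \bot = \top$ and $\neg\neg a \imp a = a$. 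For $a \in A_0$, $\neg a \in A_0$ and $\neg\neg a = a$, because $\mathbf{A}_0^+$ is an MV-algebra and negation is involutive there. So $\neg\neg a \imp a = \top$. The element $\top$ belongs to every component and gives $\neg\neg\top = \top$ and $\neg\neg\top \imp \top = \top$.

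Next I evaluate the right-hand side. Since $s$ and $u$ are $\lbrace \cdot,\imp,\top\rbrace$-terms, $s(\top) = u(\top) = \top$. For $a \in A_0$ this gives $s(\neg\neg a)\cdot u(\neg\neg a \imp a) = s(a) \cdot \top = s(a)$. For $a \in A_i$ with $i \neq 0$ it gives $s(\top)\cdot u(a) = u(a)$.

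For (i)$\Rightarrow$(ii), take any $a \in A$. If $a \in A_0$, then $t(a) = s(a)$ equals the right-hand side. If $a \in A_i$ with $i \neq 0$, then $t(a) = u(a)$ equals it as well. Since $A = \bigcup_i A_i$ (with $\bot \in A_0$), the identity holds in $\mathbf{A}$.

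For (ii)$\Rightarrow$(i), run the same computation backwards. For $a \in A_0$, (ii) gives $t(a) = s(a)$, and for $a \in A_i$ with $i \neq 0$ it gives $t(a) = u(a)$.

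The only delicate point is justifying $\neg\neg a = a$ on $A_0$. This needs that $\mathbf{A}_0$ is a Wajsberg hoop with least element $\bot$, hence an MV-algebra, where $\neg\neg x = (x\imp\bot)\imp\bot = x \lor \bot = x$ by the Wajsberg identity. I also need that $\imp$ restricted to $A_0$ is computed inside $\mathbf{A}_0$, which follows from the ordinal sum definition. The case $a = \top$ must be handled uniformly, since $\top$ lies in all components; this is consistent because $t(\top) = s(\top) = u(\top) = \top$. No further obstacle is expected, and the hypothesis $t(\top)\approx\top$ is used only in this consistency check.
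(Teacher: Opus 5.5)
Your proof is correct and follows essentially the same route as the paper: you show $\neg\neg a = a$ and $\neg\neg a \imp a = \top$ for $a \in A_0$, and $\neg\neg a = \top$ and $\neg\neg a \imp a = a$ for $a \in A_i$ with $i \neq 0$, then use $s(\top) = u(\top) = \top$ to reduce both directions to a componentwise comparison; you also spell out the ordinal-sum and Wajsberg-identity justifications that the paper leaves implicit. One small remark: the hypothesis $t(\top) \approx \top$ is not actually needed for your equivalence, because (i) gives $t(\top) = s(\top) = \top$ (as $\top \in A_0$) and (ii) gives $t(\top) = s(\top) \cdot u(\top) = \top$.
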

\begin{proof}
	To prove this result observe that, if $a \in A_0$,
	\begin{equation*}
		s^{\mathbf{A}}(\neg\neg a) \cdot u^{\mathbf{A}}(\neg\neg a \imp a) = s^{\mathbf{A}}(a) \cdot u^{\mathbf{A}}(\top) = s^{\mathbf{A}}(a).
	\end{equation*}
	Furthermore, if $a \in A_i$ with $i \neq 0$,
	\begin{equation*}
		s^{\mathbf{A}}(\neg\neg a) \cdot u^{\mathbf{A}}(\neg\neg a \imp a) = s^{\mathbf{A}}(\top) \cdot u^{\mathbf{A}}(a) = u^{\mathbf{A}}(a).
	\end{equation*}  
\end{proof}

A \emph{McNaughton function} over the interval [0,1] is a function $f\colon [0,1] \to [0,1]$ such that:
\begin{enumerate}[label=-]
	\item $f$ is continuous with respect to the natural topology of $[0,1]$;
	\item there are linear functions $p_1,\dots,p_k$, integer coefficients $a_1,\dots,a_k$, $b_1,\dots,b_k$ such that $p_i(x) = a_i  x + b_i$ for all $i \in \lbrace 1,\dots,k \rbrace$ and for every $a \in [0,1]$ there is some index $j \in \lbrace 1,\dots,k \rbrace$ such that $f(a) = p_j(a)$. 
\end{enumerate}
The following result is a well-known fact of one generated terms in the context of Wajsberg hoops and their bounded counterpart, MV-algebras (see \cite{AGLIANO2002,MCNAUGHTON1951}).

\begin{theorem}\label{thm:mcnaughton}
	If $t(x)$ is a term in the language $\lbrace \cdot,\imp,\bot,\top \rbrace$, then $t^{\mathbf{S}^+}$ is a McNaughton function. Conversely, for every McNaughton function $f$ there exists some term $s(x)$ in the language $\lbrace \cdot,\imp,\bot,\top \rbrace$, which can be effectively calculated, such that $s^{\mathbf{S}^+} = f$.
	
	If $t(x)$ is a term in the language $\lbrace \cdot,\imp,\top \rbrace$, then $t^\mathbf{S}$ is a McNaughton function and $t^\mathbf{S}(1) = 1$. Moreover, for every McNaughton function $f$ such that $f(1) = 1$ there exists some term $s(x)$ in the language $\lbrace \cdot,\imp,\top \rbrace$, which can be effectively calculated, such that $s^\mathbf{S} = f$.
\end{theorem}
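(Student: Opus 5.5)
The plan is to treat the bounded case (MV-algebra terms on $\mathbf{S}^+$) first and then derive the unbounded case (hoop terms on $\mathbf{S}$) from it using Proposition~\ref{prop:MVterm_description}. In both cases, the direction ``term $\Rightarrow$ McNaughton function'' is a routine induction on the complexity of $t(x)$.

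\emph{Terms give McNaughton functions.} The base terms $x$, $\bot$, $\top$ interpret as $x$, $0$, $1$, which are McNaughton functions. For the induction step, let $f,g$ be McNaughton functions. The functions $\max\{f+g-1,0\}$ and $\min\{g-f+1,1\}$ are again continuous. On a common refinement of the pieces of $f$ and $g$, each is a truncation of a sum or difference of integer-coefficient linear functions. Truncating at $0$ or $1$ only adds the constant pieces $0$ and $1$. So the class of McNaughton functions is closed under both operations of $\mathbf{S}^+$. For a term in $\lbrace \cdot,\imp,\top \rbrace$ the same induction applies inside $\mathbf{S}$. Moreover $t^\mathbf{S}(1)=1$, because $\mathcal{H} \vDash t(\top) \approx \top$ by the earlier proposition on one-variable terms.

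\emph{McNaughton functions are definable (bounded case).} This is the main obstacle, and I would use the one-variable Schauder-hat construction. Let $f$ be a McNaughton function. Its breakpoints are intersections of integer-coefficient lines, so they are rational. Refine the finite set $\lbrace 0,1\rbrace \cup \lbrace \text{breakpoints}\rbrace$ by Farey mediants until consecutive points $p/q < p'/q'$ satisfy $p'q - pq' = 1$; this terminates effectively. For each vertex $v=p/q$ of the resulting partition, let $h_v$ be the hat function with value $1/q$ at $v$, value $0$ at the neighbouring vertices and at every point outside the two adjacent intervals, and linear between. Unimodularity guarantees that the linear pieces of $h_v$ have integer coefficients. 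Since $f$ is linear with integer coefficients on each interval, $f(v)=m_v/q$ for some integer $0\le m_v\le q$, and
\[
f = \textstyle\bigoplus_v m_v h_v ,
\]
where $x\oplus y = \min\{x+y,1\}$ and $m_v h_v$ denotes $m_v$ copies of $h_v$ combined with $\oplus$ (this truncated sum is harmless because $f \le 1$). Two facts remain. First, $\oplus$ is definable as $\neg x \imp y$ with $\neg x = x\imp\bot$. Second, each $h_v$ is a $\min$ of two truncated integer lines $\max\{0,\min\{1,ax+b\}\}$, and $\min$ is the term $x\cdot(x\imp y)$. For the truncated lines, I would show by induction on $|a|+|b|$ that they are term-definable, using the identities $\min\{1,2y\}=y\oplus y$, $\max\{0,y-1+z\}= y\cdot z$ and negation. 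All steps are explicit, so the term is effectively computable.

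\emph{Unbounded case.} Let $f$ be McNaughton with $f(1)=1$, and take $s(x)$ from the bounded case with $s^{\mathbf{S}^+}=f$. By the one-variable proposition, $s(\top)$ is identically $\bot$ or $\top$ over $\mathcal{H}_\bot$. It cannot be $\bot$, since $s^{\mathbf{S}^+}(1)=f(1)=1\ne 0$, so $s(\top)\approx\top$. Proposition~\ref{prop:MVterm_description} then yields an effectively computable $\lbrace\cdot,\imp,\top\rbrace$-term $\overline{s}$ with $\mathbf{S}^+\vDash s\approx \overline{s}$. Because $\overline{s}$ does not mention $\bot$, its interpretation in $\mathbf{S}$ coincides with its interpretation in $\mathbf{S}^+$. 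Hence $\overline{s}^\mathbf{S}=f$.
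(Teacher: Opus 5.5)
The paper gives no proof of this theorem; it quotes it from the literature, so the question is whether your argument stands on its own. Most of it does. The induction showing that terms give McNaughton functions is correct. So is the decomposition $f=\bigoplus_v m_v h_v$ over a unimodular partition, which is Theorem~\ref{thm:mcnaughton_constructive}. So is the passage to the $\bot$-free case through Proposition~\ref{prop:MVterm_description}.

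The genuine gap is the step you flag as the main obstacle: showing that every truncated line $\ell^\sharp \coloneq \max\{0,\min\{1,\ell\}\}$, with $\ell=ax+b$ and $a,b\in\mathbb{Z}$, is a term. The paper itself obtains hat terms only by invoking this very theorem (Remark~\ref{rmk:multiplicative_separation_conucleus}), so nothing there fills the step. The two identities you cite are just the definitions of $\oplus$ and $\cdot$ on $[0,1]$, and they do not drive an induction. Doubling ($\ell^\sharp\oplus\ell^\sharp=(2\ell)^\sharp$, $\ell^\sharp\cdot\ell^\sharp=(2\ell-1)^\sharp$) together with $\neg\ell^\sharp=(1-\ell)^\sharp$ only ever produces slopes $0$ and $\pm 2^k$. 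The mixed rule you would need, $\ell_1^\sharp\cdot\ell_2^\sharp=(\ell_1+\ell_2-1)^\sharp$, fails as soon as one line leaves $[0,1]$. For $\ell_1=2x$, $\ell_2=x$ at $x=\tfrac{9}{10}$, the left side is $\tfrac{9}{10}$ and the right side is $1$.

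What is missing is an identity that lowers the slope uniformly, for instance
\[
(\ell+x)^\sharp=(\ell^\sharp\oplus x)\cdot(\ell+1)^\sharp ,
\]
which holds for every real $\ell$ and every $x\in[0,1]$ (check the cases $\ell\le -1$, $-1\le\ell\le 0$, $0\le\ell\le 1$, $\ell\ge 1$). Applied with $\ell=(a-1)x+b$, it gives an induction on $|a|$ for all $b$ simultaneously. The base case $a=0$ gives $\bot$ or $\top$, and $(ax+b)^\sharp=\neg((-a)x+1-b)^\sharp$ handles $a<0$.

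Alternatively, you can bypass truncated lines altogether. Start from $h_{0,0}=\neg x$ and $h_{0,1}=x$. The hats of $\mathsf{Far}_{n+1}$ are then $h_{n+1,m}=h_{n,u}\land h_{n,w}$ for the mediant $m$ of consecutive $u<w$ in $\mathsf{Far}_n$. For $u\in\mathsf{Far}_n$ with new neighbours $m_\pm$, they are $h_{n+1,u}=h_{n,u}\cdot\neg h_{n+1,m_-}\cdot\neg h_{n+1,m_+}$, dropping the absent factor when $u\in\{0,1\}$.

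A second, smaller problem is the refinement step. Read literally, inserting mediants between consecutive points of $\{0,1\}\cup\{\text{breakpoints}\}$ until every interval is unimodular need not terminate. For $f=\max\{0,3x-1\}\land\max\{0,3-4x\}$ the breakpoints are $\tfrac13,\tfrac47,\tfrac34$. The interval $[\tfrac47,\tfrac34]$ has determinant $5$. The successive mediants $\tfrac{7}{11},\tfrac{11}{18},\tfrac{18}{29},\dots$ (ratios of consecutive Lucas numbers) always leave a subinterval of determinant $5$. Instead, take $\mathsf{Far}_n$ with $n$ large enough to contain all nodes. Such an $n$ exists by Proposition~\ref{prop:farey_prop1}, can be found by generating $\mathsf{Far}_0,\mathsf{Far}_1,\dots$, and gives a unimodular partition.
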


The sequence of \emph{Farey partitions} of the interval $[0,1]$ is the sequence defined inductively by
\begin{equation*}
	\mathsf{Far}_n \coloneq
	\begin{cases}
		\lbrace 0,1 \rbrace & \text{if $n = 0$}; \\
		\mathsf{Far}_{n-1} \cup \lbrace \frac{a + c}{b + d}: \frac{a}{b}, \frac{c}{d} \text{ are consecutive elements of $\mathsf{Far}_{n-1}$} \rbrace & \text{if $n > 0$};
	\end{cases} 
\end{equation*} 
with $0 \coloneq \frac{0}{1}$ and $1 \coloneq \frac{1}{1}$. For each $n \in \mathbb{N}_0$ and $q \in \mathsf{Far}_n$ let $q^-$ and $q^+$ denote the predecessor and successor (if they exist) of $q$ in $\mathsf{Far}_n$, respectively. We list some important properties of this sequence.

\begin{proposition}\label{prop:farey_prop1}
	$ $
	\begin{enumerate}[label=\rm{(\arabic*)}]
		\item For all $n \in \mathbb{N}_0$, all fractions in $\mathsf{Far}_n$ are in irreducible form and the interval $[\frac{a}{b},\frac{c}{d}]$ determined by any pair of consecutive fractions in $\mathsf{Far}_n$ has the unimodularity property, that is, $bc - ad = 1$. 
		\item For every irreducible fraction $q \in \mathbb{Q} \cap [0,1]$ there exists some $m \in \mathbb{N}_0$ such that $q \in \mathsf{Far}_m$. 
	\end{enumerate}
\end{proposition}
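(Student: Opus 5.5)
We prove both parts by induction on $n$, strengthening (1) slightly: every element of $\mathsf{Far}_n$ is written as $\frac{a}{b}$ with $b \geq 1$, and any two consecutive elements $\frac{a}{b} < \frac{c}{d}$ of $\mathsf{Far}_n$ satisfy $bc - ad = 1$. Irreducibility then follows from unimodularity. If $g$ divides both $a$ and $b$, then $g$ divides $bc - ad = 1$. Every element of $\mathsf{Far}_n$ is the left or right endpoint of some consecutive pair, since $\mathsf{Far}_n$ has at least two elements, so this covers all of them.

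\textbf{Base case.} For $n = 0$ the only pair is $\frac{0}{1}, \frac{1}{1}$, and $1 \cdot 1 - 0 \cdot 1 = 1$.

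\textbf{Inductive step for (1).} Let $\frac{a}{b} < \frac{c}{d}$ be consecutive in $\mathsf{Far}_{n-1}$ with $bc - ad = 1$. The mediant satisfies
\begin{equation*}
\frac{a}{b} < \frac{a+c}{b+d} < \frac{c}{d},
\end{equation*}
because $(a+c)b - a(b+d) = bc - ad > 0$, and similarly for the other inequality. So the new points of $\mathsf{Far}_n$ are inserted exactly one inside each gap of $\mathsf{Far}_{n-1}$. Hence the consecutive pairs of $\mathsf{Far}_n$ are precisely the pairs $\frac{a}{b}, \frac{a+c}{b+d}$ and $\frac{a+c}{b+d}, \frac{c}{d}$. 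Both satisfy the unimodular relation:
\begin{equation*}
b(a+c) - a(b+d) = bc - ad = 1, \qquad (b+d)c - (a+c)d = bc - ad = 1.
\end{equation*}
Denominators remain positive. This settles (1).

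\textbf{Part (2).} This is the main step. Let $q = \frac{p}{r}$ be irreducible in $[0,1]$ with $r \geq 1$, and suppose $q \notin \mathsf{Far}_m$ for all $m$. For each $m$, $q$ then lies strictly inside some gap $\frac{a_m}{b_m} < q < \frac{c_m}{d_m}$ of $\mathsf{Far}_m$. Unimodularity forces
\begin{equation*}
pb_m - ra_m \geq 1 \quad\text{and}\quad rc_m - pd_m \geq 1,
\end{equation*}
since these are positive integers. Multiply the first by $d_m$ and the second by $b_m$, then add:
\begin{equation*}
r(b_m c_m - a_m d_m) \geq b_m + d_m, \quad\text{that is,}\quad b_m + d_m \leq r.
\end{equation*}
At the next stage the gap containing $q$ is one half of the current gap. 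It has endpoints $\frac{a_m}{b_m}, \frac{a_m+c_m}{b_m+d_m}$ or $\frac{a_m+c_m}{b_m+d_m}, \frac{c_m}{d_m}$. In either case the sum of denominators increases by at least $\min(b_m, d_m) \geq 1$. So $b_m + d_m \geq m + 2$, which contradicts $b_m + d_m \leq r$ once $m \geq r - 1$. Therefore $q \in \mathsf{Far}_m$ for some $m \leq r - 1$.

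The rational $q$ must be equal to the Farey fraction and not merely equivalent to it. This holds because both are irreducible with positive denominators, so the representations coincide.
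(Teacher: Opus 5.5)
Your proof is correct and complete. The paper itself gives no proof of this proposition: it is listed as a standard property of the Farey (Stern--Brocot) partitions and left to the literature. Your argument is the classical one. For (1) you use the mediant identities $b(a+c)-a(b+d) = (b+d)c-(a+c)d = bc-ad$ and read off irreducibility from $bc-ad=1$. For (2) you squeeze $b_m+d_m \le r$ against $b_m+d_m \ge m+2$. Carrying the constructed representation $\frac{a}{b}$ along the induction is also the right way to make the mediant well defined, and your irreducibility argument shows this agrees with the reduced reading.

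Two cosmetic remarks.

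First, the inequalities $pb_m - ra_m \ge 1$ and $rc_m - pd_m \ge 1$ come from $\frac{a_m}{b_m} < q < \frac{c_m}{d_m}$, positive denominators and integrality, not from unimodularity. Unimodularity enters only when you add them to get $r(b_mc_m - a_md_m) \ge b_m + d_m$.

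Second, assuming $q \notin \mathsf{Far}_m$ for every $m$ only yields existence. To justify the explicit bound $m \le r-1$, assume just $q \notin \mathsf{Far}_{r-1}$. By nestedness, $q \notin \mathsf{Far}_k$ for all $k \le r-1$, so every step of your descent is available up to stage $r-1$, and the same computation gives $r+1 \le b_{r-1}+d_{r-1} \le r$, a contradiction.
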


For each $n \in \mathbb{N}_0$ and $a \coloneq \frac{c}{d} \in \mathsf{Far}_n$, the \emph{Schauder hat} of the element $a$ in $\mathsf{Far}_n$, which will be denoted by $h_{n,a}$, is the McNaughton function $h_{n,a} \colon [0,1] \to [0,1]$ such that the graph of $h_{n,a}$ consists of the segments joining the points
\begin{enumerate}[label=-]
	\item $(0,1), (0^+,0), (1,0)$ (if $a = 0$);
	\item $(0,0), (a^-,0), (a,\frac{1}{d}), (a^+,0), (1,0)$ (if $0 < a < 1$);
	\item $(0,0), (1^-,0), (1,1)$ (if $a = 1$).
\end{enumerate}
Before we continue, we consider, for bounded hoops, the term
\begin{equation*}
	x \oplus y \coloneq \neg(\neg x \cdot \neg y).
\end{equation*}
The structure $\langle [0,1], \oplus^{\mathbf{S}^+}, 0 \rangle$ is a commutative monoid (see, for example, \cite{CIGNOLI2000}). Taking this into account, for all $n \in \mathbb{N}_0$ we also define the term
\begin{equation*}
	n.x \coloneq
	\begin{cases}
		\bot & \text{if $n = 0$;} \\
		x \oplus ((n-1).x) & \text{if $n > 0$}.
	\end{cases}
\end{equation*}

We say that $a \in [0,1]$ is a \emph{node} of a continuous function $f\colon [0,1] \to [0,1]$ if it is a nondifferentiability point of $f$. The elements $0$ and $1$ will always be considered nodes of $f$. Observe that the nodes of a McNaughton function are, by definition, a finite subset of $\mathbb{Q}$. For the proof of the following theorem we refer the reader to \cite{MUNDICI1994}. 

\begin{theorem}\label{thm:mcnaughton_constructive}
	Let $f\colon [0,1] \to [0,1]$ be a McNaughton function and let $N = \lbrace a_1,\dots,a_u \rbrace \subseteq \mathbb{Q}$ be the set of nodes of $f$. If $n \in \mathbb{N}_0$ is such that $N \subseteq \mathsf{Far}_n$, then 
	\begin{equation*}
		f = \bigoplus_{a \in \mathsf{Far}_n} k_a. h_{n,a}
	\end{equation*}	
	where, for each $a \coloneq \frac{c}{d} \in \mathsf{Far}_n$, the integer $k_a \in \mathbb{N}_0$ is such that $f(a) = \frac{k_a}{d}$. 
\end{theorem}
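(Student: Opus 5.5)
The plan is to verify the identity interval by interval, using the fact that consecutive points of $\mathsf{Far}_n$ cut $[0,1]$ into closed intervals on which everything in sight is affine. Enumerate $\mathsf{Far}_n$ as $0 = q_0 < q_1 < \dots < q_m = 1$ and fix a subinterval $[q_j, q_{j+1}]$. It suffices to show that $f$ and the right-hand side agree on every such interval.

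First, $f$ is affine on $[q_j,q_{j+1}]$. Since $N \subseteq \mathsf{Far}_n$, the open interval $(q_j,q_{j+1})$ contains no node of $f$. A continuous function that is piecewise given by finitely many affine maps and is differentiable on an open interval must coincide there with a single affine piece, because a change of piece at an interior point would create a nondifferentiability point. By continuity, this single piece also holds on the closed interval.

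Second, I check that the coefficients are natural numbers. Write $a = \frac{c}{d}$ in irreducible form, which is possible by Proposition~\ref{prop:farey_prop1}(1). Then $f(a) = a_i\frac{c}{d} + b_i$ for some integers $a_i, b_i$, so $f(a) = \frac{a_i c + b_i d}{d}$. Since $0 \le f(a) \le 1$, this gives $k_a = a_i c + b_i d \in \{0,\dots,d\}$.

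Third, I examine the hats on $[q_j,q_{j+1}]$. By the definition of the graphs, only $h_{n,q_j}$ and $h_{n,q_{j+1}}$ can be nonzero there, and both are affine on this interval. Moreover, $h_{n,q_j}(q_j) = \frac{1}{d_j}$ and $h_{n,q_j}(q_{j+1}) = 0$, and symmetrically for $h_{n,q_{j+1}}$. The endpoint cases $a = 0$ and $a = 1$ fit this pattern because there $d = 1$ and the value is $1$.

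Consequently, the ordinary sum $k_{q_j}h_{n,q_j} + k_{q_{j+1}}h_{n,q_{j+1}}$ is affine on $[q_j,q_{j+1}]$ and takes the values $f(q_j)$ and $f(q_{j+1})$ at the two endpoints. It therefore coincides with $f$ on that interval. All other summands vanish there.

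It remains to pass from ordinary sums to the operation $\oplus$. In $\mathbf{S}^+$ we have $x \oplus y = \min\{x+y,1\}$. All summands $k_a.h_{n,a}$ are nonnegative, and their ordinary sum is $f \le 1$ at every point. Hence every partial sum is at most $1$, so no truncation ever occurs. In particular $k_a.h_{n,a} = k_a h_{n,a}$ pointwise, and the $\oplus$-sum over $\mathsf{Far}_n$ equals the ordinary sum. Associativity and commutativity of $\oplus$ make the order of summation irrelevant.

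The only step needing care is the first one: that the absence of nodes forces a single affine piece on each Farey interval. Everything else is bookkeeping with the explicit hat graphs. Unimodularity is not strictly needed for the equality itself; it is what guarantees that the hats have integer slopes and so are McNaughton functions.
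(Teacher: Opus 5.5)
Your proof is correct. The paper gives no argument of its own for this statement; it simply refers to Mundici's 1994 paper. Your argument is the standard Schauder-hat reasoning behind that reference, specialized to one variable: $f$ is affine on each Farey interval, the two hats supported there interpolate the endpoint values $k_a/d$, and $\oplus$ never truncates because the ordinary sum is $f\le 1$.

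The step you flagged closes as follows. Distinct affine pieces meet in at most one point, so by continuity and connectedness $f$ equals a single piece between consecutive meeting points. A change of piece at such a point forces a change of slope, i.e.\ a node, which cannot lie inside a Farey interval of $\mathsf{Far}_n$.
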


The next lemma will be a useful tool for the construction of McNaughton functions with specific properties. On every interval whose bounds are consecutive elements of some Farey set, one can define linear functions with integer coefficients that satisfy special properties. The proof of this fact is a straightforward consequence of the unimodularity property.

\begin{lemma}\label{lemma:farey_prop2}
	Let $n \in \mathbb{N}_0$ and let $\frac{a}{b}$, $\frac{c}{d}$ be consecutive elements of $\mathsf{Far}_n$.
	\begin{enumerate}[label=\rm{(\arabic*)}]
		\item The linear function $f(x) \coloneq -adx + ac$ satisfies $f(\frac{a}{b}) = \frac{a}{b}$, $f(\frac{c}{d}) = 0$;
		\item The linear function $g(x) \coloneq bcx - ac$ satisfies $f(\frac{a}{b}) = 0$, $f(\frac{c}{d}) = \frac{c}{d}$.
	\end{enumerate}
\end{lemma}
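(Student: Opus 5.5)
The plan is to check the four boundary values directly, using only the unimodularity property from Proposition~\ref{prop:farey_prop1}(1). Since $\frac{a}{b}$ and $\frac{c}{d}$ are consecutive elements of $\mathsf{Far}_n$, both fractions are irreducible and $bc - ad = 1$. Both proposed functions visibly have integer coefficients: $-ad, ac$ for $f$, and $bc, -ac$ for $g$. So they are linear functions of the kind allowed in McNaughton functions, and only the prescribed values need checking. (In item (2) the statement writes $f$ where it means $g$; I read it that way.)

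For item (1), first evaluate at the right endpoint: $f(\frac{c}{d}) = -ad\cdot\frac{c}{d} + ac = -ac + ac = 0$. At the left endpoint, I will put everything over the denominator $b$ and factor out $a$:
\begin{equation*}
f\left(\tfrac{a}{b}\right) = -\frac{a^2 d}{b} + ac = \frac{a(bc - ad)}{b}.
\end{equation*}
Unimodularity gives $bc - ad = 1$, so this equals $\frac{a}{b}$.

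For item (2) the computation is symmetric. At the left endpoint, $g(\frac{a}{b}) = bc\cdot\frac{a}{b} - ac = ac - ac = 0$. At the right endpoint,
\begin{equation*}
g\left(\tfrac{c}{d}\right) = \frac{bc^2}{d} - ac = \frac{c(bc - ad)}{d} = \frac{c}{d},
\end{equation*}
again by unimodularity.

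I do not expect a real obstacle here. The only point requiring care is to apply the unimodularity identity with the correct orientation, namely $bc - ad = 1$ with $\frac{a}{b} < \frac{c}{d}$. This is exactly what Proposition~\ref{prop:farey_prop1}(1) provides for consecutive elements listed in increasing order.
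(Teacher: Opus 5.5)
Your proof is correct and follows the paper's approach: the paper only remarks that the lemma is a straightforward consequence of the unimodularity identity $bc - ad = 1$ for consecutive Farey fractions. Your four endpoint evaluations carry that out explicitly, and you are right to read the $f$ in the second item as $g$.
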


We end this section of preliminaries with a useful theorem that will allow us to construct McNaughton functions with certain properties. This result will be crucial to prove one of the main results in this work. 

\begin{theorem}\label{thm:local_identities}
	Let $t(x)$ be a term in the language $\lbrace \cdot,\imp,\top \rbrace$. Consider $m \in \mathbb{N}$, $i \in \lbrace 0,\dots,m \rbrace$, $\varepsilon > 0$ and $k_1,k_2 \in \mathbb{Z}$. 
	\begin{enumerate}[label=\rm{(\arabic*)}]
		\item If $[\frac{i}{m},\frac{i}{m} + \varepsilon) \subseteq [0,1]$ and $t^{\mathbf{S}}(a) = k_1 a + k_2$ for all $a \in [\frac{i}{m},\frac{i}{m} + \varepsilon)$, then $t^{\mathbf{S}_m}(\frac{i}{m}) = k_1\frac{i}{m} + k_2$ and $t^{\mathbf{S}_m^\omega}(i,j) = (k_1 i + k_2 m, k_1 j)$ for all $j \geq 0$.
		\item If $(\frac{i}{m} - \varepsilon,\frac{i}{m}] \subseteq [0,1]$ and $t^{\mathbf{S}}(a) = k_1 a + k_2$ for all $a \in (\frac{i}{m} - \varepsilon,\frac{i}{m}]$, then $t^{\mathbf{S}_m}(\frac{i}{m}) = k_1\frac{i}{m} + k_2$ and $t^{\mathbf{S}_m^\omega}(i,j) = (k_1 i + k_2 m, k_1 j)$ for all $j \leq 0$. 
	\end{enumerate}
\end{theorem}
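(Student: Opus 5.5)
Throughout, I read elements $(i,j)\in S_m^\omega$ with $i\in\{0,\dots,m\}$ as corresponding to the real number $\frac{i}{m}$. I would prove the statement by structural induction on the term $t(x)$ in the language $\{\cdot,\imp,\top\}$. The induction is carried out on a strengthened statement, because the hypothesis ``linear on a half-open neighbourhood'' is not stable under taking subterms.

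\textbf{Strengthened claim.} For every term $t$ and every rational $\frac{i}{m}\in[0,1)$:
\begin{enumerate}[label=\rm{(\alph*)}]
\item there exist $\varepsilon>0$ and $k_1,k_2\in\mathbb{Z}$ with $t^{\mathbf{S}}(a)=k_1a+k_2$ on $[\frac{i}{m},\frac{i}{m}+\varepsilon)$;
\item $t^{\mathbf{S}_m^\omega}(i,j)=(k_1i+k_2m,k_1j)$ for all $j\ge 0$ small enough that $(i,j)$ lies in $S_m^\omega$.
\end{enumerate}
The analogous claim holds on the left for $\frac{i}{m}\in(0,1]$ with $j\le 0$. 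Part (a) follows from Theorem~\ref{thm:mcnaughton}, since a McNaughton function has only finitely many nodes. The pair $(k_1,k_2)$ is then uniquely determined. Hence, once the strengthened claim is established, the theorem follows by comparing the given linear piece with the right germ of $t^{\mathbf{S}}$.

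\textbf{Realising $\mathbf{S}_m$ and $\mathbf{S}_m^\omega$.} The value of $t^{\mathbf{S}_m}$ at $\frac{i}{m}$ is just $t^{\mathbf{S}}(\frac{i}{m})$, since $\mathbf{S}_m$ is a subalgebra of $\mathbf{S}$. This already gives $k_1\frac{i}{m}+k_2$, by continuity of the linear piece at the endpoint. For $\mathbf{S}_m^\omega$, I would realise it inside the ultrapower-like MV-structure $\Gamma(\mathbb{Q}\times_{\mathsf{lex}}\mathbb{Z},(m,0))$. Then $(i,j)$ corresponds to $\frac{i}{m}$ plus an infinitesimal $j\eta$, and the operations are $x\cdot y=\max\{x+y-(m,0),(0,0)\}$ and $x\imp y=\min\{(m,0)-x+y,(m,0)\}$, computed lexicographically. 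Rescaling by $m$, I would define the embedding $\frac{i}{m}+j\eta\mapsto(i,j)$, scaling the first coordinate by $m$.

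\textbf{Inductive step.} The base cases $x$ and $\top$ are immediate: $x$ has germ $(1,0)$ giving $(i,j)$, and $\top$ has germ $(0,1)$ giving $(m,0)$. Suppose $t=t_1\cdot t_2$, where $t_1,t_2$ have right germs $(p_1,q_1)$ and $(p_2,q_2)$ at $\frac{i}{m}$. Then on a small right neighbourhood $t^{\mathbf{S}}$ equals one of the two pieces, either $\max$-branch $(p_1+p_2)a+(q_1+q_2-1)$ or $0$. The branch selected in $\mathbf{S}$ on $(\frac{i}{m},\frac{i}{m}+\varepsilon)$ is decided by the sign of an affine function near $\frac{i}{m}$: first by its value at $\frac{i}{m}$, and if that is zero, by its slope. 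In the lexicographic computation at $(i,j)$ with $j>0$, the branch is decided by exactly the same data: first coordinate $=$ value times $m$, second coordinate $=$ slope times $j$. Therefore the lexicographic $\max$ picks the same branch, and the result is $(k_1i+k_2m,k_1j)$ with the same $(k_1,k_2)$. The case of $\imp$ with $\min$ is identical. For $j=0$ both branches agree at the point itself, so there is no ambiguity. The left version uses $j<0$, where the slope enters with the opposite sign, matching the left neighbourhood.

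\textbf{Main obstacle.} The delicate step is the bookkeeping when the tie-breaking affine function vanishes identically near $\frac{i}{m}$. In that case both branches coincide on the neighbourhood, and also in the lexicographic computation, so either choice works. The other point needing care is checking that the intermediate values of subterms stay inside $S_m^\omega$, i.e. that the first coordinate stays in $[0,m]$ with the correct lex bounds. This holds automatically because $\mathbf{S}_m^\omega$ is closed under the operations. Hence the computed formula, valid in the ambient lexicographic group, agrees with the truncated operations of $\mathbf{S}_m^\omega$.
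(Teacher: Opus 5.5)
Your argument is correct, but it takes a different route from the paper. The paper does not induct on terms. It takes a non-principal ultrapower $\mathbf{S}^*$ of $\mathbf{S}$ and embeds $\mathbf{S}_m^\omega$ into it via $(i,j)\mapsto(\frac{i}{m}+\frac{j}{n})_n/\mathfrak{U}$. It then observes that the given linear formula yields $t^{\mathbf{S}}(\frac{i}{m}+\frac{j}{n})=\frac{k_1i+k_2m}{m}+\frac{k_1j}{n}$ for all sufficiently large $n$, so $t^{\mathbf{S}^*}(f(i,j))=f(k_1i+k_2m,k_1j)$, and it pulls this back along the embedding. Before applying $f$, it has to check separately that $(k_1i+k_2m,k_1j)$ actually lies in $S_m^\omega$.

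You instead push the same underlying fact through the syntax of $t$. That fact is that lexicographic comparison with $(0,0)$ records the sign of an affine function first by its value at $\frac{i}{m}$ and then by its one-sided slope. Doing it syntactically forces you to strengthen the hypothesis to one-sided germs of all subterms at $\frac{i}{m}$, which Theorem~\ref{thm:mcnaughton} supplies.

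What your route buys is that it is elementary and finitary. There are no ultrafilters, and you need no verification that $f$ is an embedding, which the paper states without proof. Membership of $(k_1i+k_2m,k_1j)$ in $S_m^\omega$ also comes for free, because it is the value of a term operation of $\mathbf{S}_m^\omega$. What the paper's route buys is that the identity transfers in one step for an arbitrary term, without tracking subterms. The embedding $\mathbf{S}_m^\omega\hookrightarrow\mathbf{S}^*$ is also a reusable structural fact.

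Two small remarks. First, the detour through $\Gamma(\mathbb{Q}\times_{\mathsf{lex}}\mathbb{Z},(m,0))$ is unnecessary. By Example~\ref{ex:hoops}, $\mathbf{S}_m^\omega$ already is the lexicographic interval $[(0,0),(m,0)]$ of $\mathbb{Z}^2$, with $x\cdot y=\max\{x+y-(m,0),(0,0)\}$ and $x\imp y=\min\{(m,0)-x+y,(m,0)\}$, so you can compute there directly. Second, no continuity argument is needed for $t^{\mathbf{S}_m}(\frac{i}{m})$, since $\frac{i}{m}$ belongs to the half-closed interval in the hypothesis.
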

\begin{proof}
	Let $\mathfrak{U}$ be some non-principal ultrafilter over $\mathbb{N}$. To prove this result, consider the ultraproduct $\mathbf{S}^* \coloneq [\prod_{n \in \mathbb{N}} \mathbf{S}] \slash \mathfrak{U}$. Then, the following hold:
	\begin{enumerate}[label=-]
		\item $\mathbf{S}^*$ is a totally ordered Wajsberg hoop.
		\item If 
		\begin{equation*}
			\varphi_m(i,j) \coloneq \min \left\lbrace n \in \mathbb{N}: \frac{i}{m} + \frac{j}{n} \in [0,1] \right\rbrace
		\end{equation*}
		for all $(i,j) \in S_m^\omega$, then the map $\varphi_m: S_m^\omega \to \mathbb{N}$ is well defined. 
		\item The mapping $f(i,j) \coloneq a(i,j) \slash \mathfrak{U}$ is an embedding from  $\mathbf{S}_m^\omega$ into $\mathbf{S}^*$, where
		\begin{equation*}
			a(i,j)_n \coloneq 
			\begin{cases}
				1 & \text{if $n < \varphi_m(i,j)$;} \\
				\frac{i}{m} + \frac{j}{n} & \text{if $n \geq \varphi_m(i,j)$;} 
			\end{cases}
		\end{equation*}
		for all $(i,j) \in S_m^\omega$ and $n \in \mathbb{N}$. 
	\end{enumerate}
	
	We are going to restrict the proof to item (1). The proof of item (2) is analogous. The equality $t^{\mathbf{S}_m}(\frac{i}{m}) = k_1 \frac{i}{m} + k_2$ is immediate, because $\mathbf{S}_m$ is a subalgebra of $\mathbf{S}$. Given some $j \geq 0$, we first need to show that $(k_1 i + k_2 m,k_1 j)$ is effectively an element of $S_m^\omega$. First observe that
	\begin{equation*}
		k_1\frac{i}{m} + k_2 \in [0,1] \Rightarrow m(k_1\frac{i}{m} + k_2) \in [0,m] \Rightarrow k_1 i + k_2 m \in \lbrace 0,\dots,m \rbrace.
	\end{equation*}
	Taking the previous remarks into account, it suffices to show that $k_1 i + k_2 m = 0$ implies $k_1 j \geq 0$ and $k_1 i + k_2 m = m$ implies $k_1 j \leq 0$. We can safely assume that $j > 0$, since the proof of $j = 0$ is immediate. We start with the case where $k_1 i + k_2 m = 0$. Since $j > 0$, there exists some $n \in \mathbb{N}$ such that $\frac{i}{m} + \frac{j}{n} \in (\frac{i}{m},\frac{i}{m} + \varepsilon)$. Then, $k_1(\frac{i}{m} + \frac{j}{n}) + k_2 \in [0,1]$. If $k_1 < 0$,  
	\begin{equation*}
		\textstyle k_1(\frac{i}{m} + \frac{j}{n}) + k_2 < k_1 \frac{i}{m} + k_2 = 0
	\end{equation*}
	and we reached a contradiction. Therefore $k_1 \geq 0$ and $k_1 j \geq 0$. Now we suppose that $k_1 i + k_2 m = m$. Considering the same $n$ as before, if $k_1 > 0$, then
	\begin{equation*}
		\textstyle k_1(\frac{i}{m} + \frac{j}{n}) + k_2 > k_1 \frac{i}{m} + k_2 = 1
	\end{equation*}
	and we reached a contradiction, again. Then $k_1 \leq 0$ and $k_1 j \leq 0$.
	
	We now continue with the proof of (1). Since $j \geq 0$, there exists some $k \in \mathbb{N}$ such that $\frac{i}{m} + \frac{j}{n} \in [\frac{i}{m},\frac{i}{m} + \varepsilon)$ for all $n \geq k$. If we define $l \coloneq \max\lbrace k,\varphi_m(i,j),\varphi_m(k_1 i + k_2 m,k_1 j) \rbrace$, then
	\begin{equation*}
		\begin{aligned}
			\textstyle t^{\mathbf{S}}(a(i,j)_n) & \textstyle = t^{\mathbf{S}}(\frac{i}{m} + \frac{j}{n}) \\
			& \textstyle = k_1(\frac{i}{m} + \frac{j}{n}) + k_2 \\
			& \textstyle = \frac{k_1i + k_2m}{m} + \frac{k_1j}{n} \\
			& \textstyle = a(k_1 i + k_2 m,k_1 j)_n.
		\end{aligned}
	\end{equation*}
	for all $n \geq l$. Therefore,
	\begin{equation*}
		\lbrace n \in \mathbb{N}: n \geq l \rbrace \subseteq \lbrace n \in \mathbb{N}: t^{\mathbf{S}}(a(i,j)_n) = a(k_1 i + k_2 m,k_1 j)_n \rbrace.
	\end{equation*}
	Since $\mathfrak{U}$ is non-principal, it must contain all cofinite subsets of $\mathbb{N}$. Therefore, $t^{\mathbf{S}^*}(f(i,j)) = f(k_1 i + k_2 m,k_1 j)$. Since $f$ is an embedding, $f(t^{\mathbf{S}_m^\omega}(i,j)) = f(k_1 i + k_2 m,k_1 j)$ and $t^{\mathbf{S}_m^\omega}(i,j) = (k_1 i + k_2 m,k_1 j)$.  This concludes the proof.  
\end{proof}

\section{Conuclei on hoops}\label{sec:conuclei_on_hoops}
An \emph{interior operator} on a poset $\mathbf{P}$ is a map $\delta \colon P \to P$ such that 
\begin{equation*}
	\delta(a) \leq a, \quad a \leq b \Rightarrow \delta(a) \leq \delta(b) \quad \text{and} \quad \delta(a) = \delta(\delta(a))
\end{equation*}
for all $a,b \in P$. Interior operators are determined by their image $\delta(P) = \lbrace a \in P: \delta(a) = a \rbrace$, because $\delta(a) = \max\lbrace b \in \delta(P): b \leq a \rbrace$ for all $a \in P$.

A $\emph{conucleus}$ on a (bounded) hoop $\mathbf{A}$ is an interior operator $\delta$ on $\mathbf{A}$ that satisfies 
\begin{equation*}
	\delta(a) \cdot \delta(b) \leq \delta(a \cdot b) \quad \text{and} \quad \delta(a) \cdot \delta(\top) = \delta(a)
\end{equation*}
for all $a,b \in A$. The following characterization is a consequence of the definition.

\begin{proposition}\label{prop:conuclei_characterization}
	Let $\mathbf{A}$ be a hoop and $\delta\colon A \to A$. The following conditions are equivalent:
	\begin{enumerate}[label=\rm{(\roman*)}]
		\item $\delta$ is a conucleus on $\mathbf{A}$;
		\item $\delta$ is an interior operator such that, for all $a,b \in A$,
		\begin{equation*}
			\delta(\delta(a) \cdot \delta(b)) = \delta(a) \cdot \delta(b) \ \text{and} \ \delta(a) \cdot \delta(\top) = \delta(a).
		\end{equation*}
	\end{enumerate}	
\end{proposition}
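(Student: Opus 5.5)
Both conditions share the interior-operator requirement and the identity $\delta(a)\cdot\delta(\top)=\delta(a)$, so the plan is to show that, for an interior operator $\delta$ on $\mathbf{A}$, the inequality $\delta(a)\cdot\delta(b)\leq\delta(a\cdot b)$ for all $a,b$ is equivalent to $\delta(\delta(a)\cdot\delta(b))=\delta(a)\cdot\delta(b)$ for all $a,b$. The only tools needed are the three interior-operator axioms and the monotonicity of multiplication from Proposition~\ref{prop:hoops_properties}(2).

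For (i)$\Rightarrow$(ii), fix $a,b\in A$. Since $\delta$ is idempotent, $\delta(\delta(a))=\delta(a)$ and $\delta(\delta(b))=\delta(b)$. Applying the conucleus inequality to the elements $\delta(a),\delta(b)$ gives
\begin{equation*}
\delta(a)\cdot\delta(b)=\delta(\delta(a))\cdot\delta(\delta(b))\leq\delta(\delta(a)\cdot\delta(b)).
\end{equation*}
The reverse inequality $\delta(\delta(a)\cdot\delta(b))\leq\delta(a)\cdot\delta(b)$ is just $\delta(c)\leq c$ with $c=\delta(a)\cdot\delta(b)$. Hence equality holds.

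For (ii)$\Rightarrow$(i), fix $a,b\in A$. From $\delta(a)\leq a$ and $\delta(b)\leq b$, two applications of Proposition~\ref{prop:hoops_properties}(2) give $\delta(a)\cdot\delta(b)\leq a\cdot b$. Since $\delta$ is monotone, $\delta(\delta(a)\cdot\delta(b))\leq\delta(a\cdot b)$. By (ii) the left side equals $\delta(a)\cdot\delta(b)$, which yields the required inequality.

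There is no real obstacle here. The only point to watch is that (ii)$\Rightarrow$(i) needs monotonicity of multiplication in each argument, which Proposition~\ref{prop:hoops_properties}(2) together with commutativity supplies.
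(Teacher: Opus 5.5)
Your proof is correct and is essentially the routine verification the paper intends. The paper gives no written proof; it only remarks that the characterization is a consequence of the definition. Your two directions supply exactly that: idempotence plus $\delta(c)\leq c$ for (i)$\Rightarrow$(ii), and monotonicity of multiplication and of $\delta$ for (ii)$\Rightarrow$(i).
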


We continue with some basic algebraic properties of conuclei on hoops. Some of them are well-known results for interior operators on partially ordered sets (see, for example, \cite{PRIESTLEY2002}). 
\begin{lemma}\label{lemma:conuclei_properties}
	Let $\mathbf{A}$ be a hoop and $\delta$ a conucleus on $\mathbf{A}$. For all $a, b \in A$ the following hold:
	\begin{enumerate}[label=\rm{(\arabic*)}]
		\item $\delta(a \land b) = \delta(\delta(a) \land b) = \delta(\delta(a) \land \delta(b))$.
		\item If $B \subseteq A$ and $\bigvee \delta(B)$ exists, then $\delta(\bigvee \delta(B)) = \bigvee \delta(B)$.
		\item $\delta(a) = \max\lbrace b \in \delta(A): b \leq a \rbrace$. 
		\item If $\mathbf{A}$ has least element $\bot$, then $\delta(\bot) = \bot$.
		\item $\delta(\delta(a) \cdot \delta(b)) = \delta(a) \cdot \delta(b)$.
		\item $\delta(a \imp b) \leq \delta(a) \imp \delta(b)$.
		\item $\delta(\top)^2 = \delta(\top)$.
		\item $\delta(a) \leq \delta(\top) \cdot a$.
	\end{enumerate}
\end{lemma}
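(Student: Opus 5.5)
Each item follows directly from the interior-operator axioms (deflationary, monotone, idempotent) together with the two conuclear inequalities, and I would prove them in the order (3), (1), (2), (4), (5), (7), (8), (6), since (5) feeds into (6)–(8). For (3), the map $\delta(a)$ lies in $\delta(A)$ and satisfies $\delta(a)\leq a$. Conversely, if $b=\delta(b)\leq a$, monotonicity gives $b\leq\delta(a)$, so $\delta(a)$ is the maximum. For (1), monotonicity gives $\delta(\delta(a)\land\delta(b))\leq\delta(\delta(a)\land b)\leq\delta(a\land b)$. In the other direction, $\delta(a\land b)\leq\delta(a)$ and $\delta(a\land b)\leq\delta(b)$, so $\delta(a\land b)\leq\delta(a)\land\delta(b)$. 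Applying $\delta$ and idempotence then gives $\delta(a\land b)\leq\delta(\delta(a)\land\delta(b))$. For (2), each $\delta(b)$ with $b\in B$ satisfies $\delta(b)=\delta\delta(b)\leq\delta(\bigvee\delta(B))$, so $\bigvee\delta(B)\leq\delta(\bigvee\delta(B))$, and the reverse inequality is deflationarity. For (4), $\delta(\bot)\leq\bot$.

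For (5), I use the conuclear inequality $\delta(a)\cdot\delta(b)=\delta\delta(a)\cdot\delta\delta(b)\leq\delta(\delta(a)\cdot\delta(b))$, and the reverse holds because $\delta$ is deflationary; this is also one direction of Proposition~\ref{prop:conuclei_characterization}. For (7), the axiom $\delta(a)\cdot\delta(\top)=\delta(a)$ applied with $a=\top$ gives $\delta(\top)^2=\delta(\top)$ at once. For (8), I write $\delta(a)=\delta(a)\cdot\delta(\top)$ and use $\delta(a)\leq a$ together with monotonicity of multiplication (Proposition~\ref{prop:hoops_properties}(2)) to get $\delta(a)\cdot\delta(\top)\leq a\cdot\delta(\top)$.

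For (6), by residuation it suffices to show $\delta(a\imp b)\cdot\delta(a)\leq\delta(b)$. The conuclear inequality gives $\delta(a\imp b)\cdot\delta(a)\leq\delta((a\imp b)\cdot a)$. Since $a\cdot(a\imp b)=a\land b\leq b$ by Proposition~\ref{prop:hoop_order}, monotonicity gives $\delta((a\imp b)\cdot a)\leq\delta(b)$. Residuation then yields $\delta(a\imp b)\leq\delta(a)\imp\delta(b)$.

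I expect no real obstacle. The only step needing care is (1): the chain of inequalities must pass through $\delta(a)\land\delta(b)$, which exists because the hoop order is a meet-semilattice, and idempotence must be used to close the loop.
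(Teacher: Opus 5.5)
Your proof is correct and matches the paper's: items (5)--(8) are argued exactly as in the paper (the conuclear inequality for (5), the chain $\delta(a\imp b)\cdot\delta(a)\leq\delta((a\imp b)\cdot a)\leq\delta(b)$ plus residuation for (6), and $\delta(a)=\delta(a)\cdot\delta(\top)\leq a\cdot\delta(\top)$ for (8)), and you also supply the routine interior-operator arguments for (1)--(4) that the paper omits. One small inaccuracy: none of your arguments for (6)--(8) actually uses (5), so your stated reason for the proof order does not hold, though this does not affect correctness.
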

\begin{proof}
	Items (1) to (4) are well-known properties of interior operators and their proofs will be omitted. Item (5) is a consequence of Proposition~\ref{prop:conuclei_characterization}. To prove item (6) observe that
	\begin{equation*}
		\begin{aligned}
			(a \imp b) \cdot a \leq b & \Rightarrow \delta((a\imp b) \cdot a) \leq \delta(b) \\
			& \Rightarrow \delta(a \imp b) \cdot \delta(a) \leq \delta(b) \\
			& \Rightarrow  \delta(a \imp b) \leq \delta(a) \imp \delta(b).
		\end{aligned}
	\end{equation*}
	
	Item (7) is straightforward. The same occurs for item (8), since $\delta(a) = \delta(a) \cdot \delta(\top) \leq a \cdot \delta(\top)$.  
\end{proof}

In the following examples, let $\mathbf{A}$ be an arbitrary hoop. Recall that a nonempty subset $F$ of $A$ is a \emph{filter} on $\mathbf{A}$ if, for all $a,b \in A$:
\begin{enumerate}[label=-]
	\item $a \in F$ and $a \leq b$ implies $b \in F$;
	\item $a,b \in F$ implies $a \cdot b \in F$.
\end{enumerate}
The set of all filters ordered under inclusion forms a lattice that is isomorphic to the lattice of congruences on $\mathbf{A}$ (see, for example, \cite[Theorem~2.14]{FERREIRIM1992}).	
\begin{example}\label{ex:conuclei}
	Let $a_0$ be a fixed element of $\mathbf{A}$ such that $a_0^2 = a_0$. If we define  
	\begin{equation*}
		\delta_{a_0}(a) \coloneq a \cdot a_0 
	\end{equation*}
	for all $a \in A$, then $\delta_{a_0}$ is a conucleus on $\mathbf{A}$. Since $\mathbf{A}$ is a hoop, $a \cdot a_0 = a \land a_0$ for all $a \in A$ and $\delta(A) = \lbrace a \in A: a \leq a_0 \rbrace$.
	
	Assume that $\mathbf{A} \vDash x^n \approx x^{n+1}$ for some $n \in \mathbb{N}$. Then,
	\begin{equation*}
		\delta_n \coloneq (x^{n})^\mathbf{A}
	\end{equation*}
	defines a conucleus on $\mathbf{A}$. Additionally, $\delta_n(A) = \lbrace a \in A: a^2 = a \rbrace$.
	
	If $\mathbf{A}$ has least element $\bot$, then another example of a conucleus is the $\square$ operator, defined as
	\begin{equation*}
		\square a \coloneq
		\begin{cases}
			\bot & \text{if $a \neq \top$}; \\
			\top & \text{if $a = \top$};
		\end{cases}
	\end{equation*}
	for all $a \in A$. Clearly, $\square(A) = \lbrace \bot,\top \rbrace$. The $\square$ operator can be easily generalized. If $F$ is a filter on $\mathbf{A}$ and we define
	\begin{equation*}
		\delta_F(a) \coloneq
		\begin{cases}
			\bot & \text{if $a \notin F$}; \\
			a & \text{if $a \in F$},
		\end{cases}
	\end{equation*}
	for all $a \in A$, then $\delta_F$ is a conucleus on $\mathbf{A}$. Also, $\delta_F(A) = \lbrace \bot \rbrace \cup F$.
\end{example}

If $\mathbf{A} \coloneq \langle A, \cdot, \imp, \top \rangle$ is a hoop and $\delta$ a conucleus on $\mathbf{A}$, we define
\begin{equation*}
	\mathbf{A}_\delta \coloneq \langle \delta(A), \cdot, \imp_\delta, \delta(\top) \rangle
\end{equation*}
where $a \imp_\delta b \coloneq \delta(a \imp b)$ for all $a,b \in \delta(A)$. As a consequence of Lemma~\ref{lemma:conuclei_properties}, $\mathbf{A}_\delta$ is well defined. If $\mathbf{A}$ is a bounded hoop, we define  
\begin{equation*}
	\mathbf{A}_\delta \coloneq \langle \delta(A), \cdot, \imp_\delta,\bot,\delta(\top) \rangle
\end{equation*}
The algebra $\mathbf{A}_\delta$ is known as the \emph{conuclear image} of $\mathbf{A}$ relative to $\delta$. The next result summarizes some properties of this structure.
\begin{proposition}
	Let $\mathbf{A}$ be a hoop and $\delta$ a conucleus on $\mathbf{A}$. Then:
	\begin{enumerate}[label=\rm{(\arabic*)}]
		\item $\langle \delta(A),\cdot,\delta(\top) \rangle$ is a commutative monoid.
		\item $a \imp_\delta a = \delta(\top)$ for all $a \in \delta(A)$.
		\item $(a \cdot b) \imp_\delta c = a \imp_\delta (b \imp_\delta c)$ for all $a,b,c \in \delta(A)$.
		\item $\mathbf{A}_\delta$ is a hoop if and only if $a \cdot \delta(a \imp b) = b \cdot \delta(b \imp a)$ for all $a,b \in \delta(A)$.
	\end{enumerate}
\end{proposition}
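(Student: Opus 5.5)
The plan is to verify the four items directly from the definition of a conucleus and from Lemma~\ref{lemma:conuclei_properties}, using that $\delta(A)$ is closed under products (item (5) of that lemma) and that $\delta(\top)$ acts as a unit on $\delta(A)$.

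For (1), closure of $\delta(A)$ under $\cdot$ follows from Lemma~\ref{lemma:conuclei_properties}(5). Associativity and commutativity are inherited from $\mathbf{A}$. Since $\delta(\top)\in\delta(A)$ and $a\cdot\delta(\top)=a$ for every $a=\delta(a)\in\delta(A)$ by the definition of a conucleus, $\delta(\top)$ is the identity.

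For (2), take $a\in\delta(A)$. Then $a\imp a=\top$, so $a\imp_\delta a=\delta(\top)$ directly.

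For (3), take $a,b,c\in\delta(A)$. The left side is $\delta((a\cdot b)\imp c)=\delta(a\imp(b\imp c))$, and the right side is $\delta(a\imp\delta(b\imp c))$. The inequality $\ge$ follows from monotonicity of $\imp$ in its second argument together with $\delta(b\imp c)\le b\imp c$. For $\le$, it suffices to show $\delta(a\imp(b\imp c))\le a\imp\delta(b\imp c)$, since applying $\delta$ and idempotence then give the claim. By residuation this amounts to $a\cdot\delta(a\imp(b\imp c))\le\delta(b\imp c)$.
\begin{itemize}
\item The left-hand product is a product of two elements of $\delta(A)$, so by Lemma~\ref{lemma:conuclei_properties}(5) it lies in $\delta(A)$.
\item It is also $\le a\cdot(a\imp(b\imp c))\le b\imp c$.
\item Since $\delta(b\imp c)$ is the largest element of $\delta(A)$ below $b\imp c$ (Lemma~\ref{lemma:conuclei_properties}(3)), the product is $\le\delta(b\imp c)$.
\end{itemize}
This interplay between residuation and the maximality in Lemma~\ref{lemma:conuclei_properties}(3) is the main point of the argument.

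For (4), items (1)--(3) already give a commutative monoid with $a\imp_\delta a$ equal to the unit and the identity $(x\cdot y)\imp z\approx x\imp(y\imp z)$. The remaining hoop axiom, instantiated in $\mathbf{A}_\delta$, reads exactly $a\cdot\delta(a\imp b)=b\cdot\delta(b\imp a)$. So $\mathbf{A}_\delta$ is a hoop if and only if this equality holds for all $a,b\in\delta(A)$, and both directions are immediate.
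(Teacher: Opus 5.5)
Your proof is correct and matches the paper's proof of item (3), which is the only item the paper proves: you establish the same two inequalities, and your key step $\delta(a\imp(b\imp c))\le a\imp\delta(b\imp c)$ is Lemma~\ref{lemma:conuclei_properties}(6) with $\delta(a)=a$, which you re-derive inline from closure of $\delta(A)$ under products and the maximality in Lemma~\ref{lemma:conuclei_properties}(3) instead of citing it. Your arguments for items (1), (2) and (4), which the paper omits, are also correct.
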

\begin{proof}
	We are going to restrict ourselves to the proof of item (3). If $a,b,c \in \delta(A)$, observe that $(a \cdot b) \imp_\delta c = \delta((a \cdot b) \imp c)$ and $a \imp_\delta (b \imp_\delta c) = \delta(a \imp \delta(b \imp c))$. On the one hand, we know that $\delta(b \imp c) \leq b \imp c$. Therefore,
	\begin{equation*}
		\delta(a \imp \delta(b \imp c)) \leq \delta(a \imp (b \imp c)) = \delta((a \cdot b) \imp c).
	\end{equation*}
	On the other hand, as a consequence of item (6) in Lemma~\ref{lemma:conuclei_properties}, $\delta(a \imp (b \imp c)) \leq a \imp \delta(b \imp c)$. Hence,
	\begin{equation*}
		\delta((a \cdot b) \imp c) = \delta(a \imp (b \imp c)) = \delta(\delta(a \imp (b \imp c))) \leq \delta(a \imp \delta(b \imp c)). 
	\end{equation*}  
\end{proof}

Observe that $\mathbf{A}_\delta$ is not necessarily a hoop. By the previous result, the reason for this is the identity $x \cdot (x \imp y) \approx y \cdot (y \imp x)$. To show a counterexample, consider the structure $\mathbf{S}_4$ defined in Example~\ref{ex:hoops}. 

\begin{proposition}
	There is a conucleus $\delta$ on $\mathbf{S}_4$ such that $(\mathbf{S}_4)_\delta$ is not a hoop.
\end{proposition}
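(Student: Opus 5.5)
The plan is to exhibit an explicit conucleus on $\mathbf{S}_4$ by choosing its image, and then to find a pair of elements of that image for which the criterion in item (4) of the previous proposition fails.

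\textbf{Restricting the choices.} An interior operator is determined by its image, via item (3) of Lemma~\ref{lemma:conuclei_properties}: $\delta(a) = \max\lbrace b \in \delta(A): b \leq a \rbrace$. Since $\delta(\top)$ is idempotent by item (7) of the same lemma, and the only idempotents of $\mathbf{S}_4$ are $0$ and $1$, any nontrivial choice forces $\delta(1) = 1$. Likewise $\delta(0) = 0$ by item (4). So I will look for a subset $D \subseteq S_4$ with $0,1 \in D$ that is closed under $\cdot$.

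\textbf{Verifying that such a $D$ gives a conucleus.} Let $D$ be any such subset and define $\delta(a) \coloneq \max\lbrace b \in D : b \leq a \rbrace$. This is an interior operator on the chain $S_4$. Since $1 \in D$, we have $\delta(1) = 1$, hence $\delta(a) \cdot \delta(\top) = \delta(a)$. For the multiplicative inequality, note that $\delta(a) \cdot \delta(b) \in D$ by closure, and $\delta(a)\cdot\delta(b) \leq a \cdot b$ by monotonicity of $\cdot$. Therefore $\delta(a)\cdot\delta(b) \leq \delta(a\cdot b)$, and $\delta$ is a conucleus.

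\textbf{The concrete choice.} Take $D \coloneq \lbrace 0, \tfrac14, \tfrac12, 1\rbrace$. It is closed under $\cdot$ because
\begin{equation*}
\tfrac14\cdot\tfrac14 = \tfrac14\cdot\tfrac12 = \tfrac12\cdot\tfrac12 = 0.
\end{equation*}
The resulting $\delta$ fixes every element of $D$ and sends $\tfrac34 \mapsto \tfrac12$.

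\textbf{Failure of the hoop identity.} Now test the condition in item (4) with $a = \tfrac12$ and $b = \tfrac14$. On one side,
\begin{equation*}
a \cdot \delta(a \imp b) = \tfrac12 \cdot \delta(\tfrac34) = \tfrac12 \cdot \tfrac12 = 0.
\end{equation*}
On the other side,
\begin{equation*}
b \cdot \delta(b \imp a) = \tfrac14 \cdot \delta(1) = \tfrac14.
\end{equation*}
These differ, so by item (4) of the preceding proposition $(\mathbf{S}_4)_\delta$ is not a hoop.

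The only real work is locating a suitable $D$. Smaller images such as $\lbrace 0,\tfrac12,1\rbrace$ or $\lbrace 0,\tfrac14,1\rbrace$ produce copies of hoops and do not give a counterexample, while $\lbrace 0,\tfrac34,1\rbrace$ is not closed under $\cdot$. The four-element image above is the first choice that breaks the identity.
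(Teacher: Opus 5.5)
Your proposal is correct and is essentially the paper's proof: the same image $\lbrace 0,\frac{1}{4},\frac{1}{2},1\rbrace$, the same max-below conucleus, and the same witnesses $a=\frac{1}{2}$, $b=\frac{1}{4}$, which give $0 \neq \frac{1}{4}$. Your explicit check that the max-below map over a multiplicatively closed set containing $0$ and $1$ is a conucleus supplies the step the paper leaves as easy.
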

\begin{proof}
	It can be easily shown that the set $B \coloneq \lbrace 0, \frac{1}{4}, \frac{1}{2}, 1 \rbrace$ is closed under multiplication. It follows that $\delta(x) \coloneq \max \lbrace b \in B: b \leq x \rbrace$ is a conucleus on $\mathbf{S}_4$. However, $\frac{1}{2} \cdot \left(\frac{1}{2} \imp_\delta \frac{1}{4} \right) = 0 \neq \frac{1}{4} = \frac{1}{4} \cdot \left(\frac{1}{4} \imp_\delta \frac{1}{2} \right)$.  
\end{proof}

Taking into account the previous proposition, we consider conuclei with an additional property. If $\mathbf{A}$ is a hoop, we say that $\delta\colon A \to A$ is a \emph{multiplicative conucleus} on $\mathbf{A}$, if $\delta$ is an interior operator on $\mathbf{A}$ that satisfies the equality $\delta(a \cdot b) = \delta(a) \cdot \delta(b)$ for all $a,b \in A$. It can be easily shown that all the conuclei defined in Example~\ref{ex:conuclei} are multiplicative conuclei. The next result shows that the conuclear image relative to a multiplicative conucleus is a hoop.

\begin{proposition}\label{prop:image_of_mult_conucleus_is_hoop}
	If $\mathbf{A}$ is a hoop and $\delta$ is a multiplicative conucleus on $\mathbf{A}$, then $\mathbf{A}_\delta$ is a hoop.
\end{proposition}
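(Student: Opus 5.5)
By item (4) of the preceding proposition, together with items (1)–(3), it is enough to check one identity: for all $a,b \in \delta(A)$,
\begin{equation*}
	a \cdot \delta(a \imp b) = b \cdot \delta(b \imp a).
\end{equation*}
Items (1)–(3) give the monoid structure, $a \imp_\delta a = \delta(\top)$, and the residuation-type identity. These hold for any conucleus, and a multiplicative conucleus is a conucleus: the inequality $\delta(a)\cdot\delta(b) \leq \delta(a\cdot b)$ is immediate. The condition $\delta(a)\cdot\delta(\top) = \delta(a)$ follows from multiplicativity, since $\delta(a) = \delta(a \cdot \top) = \delta(a)\cdot\delta(\top)$. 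So the only work is the divisibility identity, and the idea is to push $\delta$ through the product.

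Let $a,b \in \delta(A)$, so $\delta(a) = a$ and $\delta(b) = b$. Multiplicativity gives
\begin{equation*}
	a \cdot \delta(a \imp b) = \delta(a) \cdot \delta(a \imp b) = \delta(a \cdot (a \imp b)).
\end{equation*}
Because $\mathbf{A}$ is a hoop, $a \cdot (a \imp b) = b \cdot (b \imp a)$; both equal $a \land b$ by Proposition~\ref{prop:hoop_order}. Hence
\begin{equation*}
	\delta(a \cdot (a \imp b)) = \delta(b \cdot (b \imp a)) = \delta(b) \cdot \delta(b \imp a) = b \cdot \delta(b \imp a),
\end{equation*}
where multiplicativity is used once more. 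This is exactly the required identity, so $\mathbf{A}_\delta$ is a hoop.

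There is no real obstacle. The one point to watch is that items (1)–(3) of the preceding proposition apply only to conuclei, so one must first record that a multiplicative interior operator satisfies both conuclear axioms, as done above. It is also worth noting that the computation shows $a \cdot \delta(a\imp b) = \delta(a \land b)$. Since $\delta(a \land b) = \delta(a) \land \delta(b) = a \land b$ for $a,b\in\delta(A)$ by Lemma~\ref{lemma:conuclei_properties}(1), the meet in $\mathbf{A}_\delta$ coincides with the meet in $\mathbf{A}$.
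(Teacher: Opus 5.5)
Your proof is correct and is essentially the paper's own argument: reduce to the divisibility identity in $\mathbf{A}_\delta$ and compute $a\cdot\delta(a\imp b)=\delta(a\cdot(a\imp b))=\delta(b\cdot(b\imp a))=b\cdot\delta(b\imp a)$ via multiplicativity and the hoop axiom (your check that a multiplicative interior operator satisfies both conuclear axioms is a harmless addition). One caution about your closing aside, which the proof does not need: Lemma~\ref{lemma:conuclei_properties}(1) only gives $\delta(a\land b)=\delta(\delta(a)\land\delta(b))$, which is vacuous for $a,b\in\delta(A)$ and does not give $\delta(a\land b)=\delta(a)\land\delta(b)$ (this can fail for non-multiplicative conuclei), so your meet claim, although true in the multiplicative case, needs its own argument---e.g.\ with $u=\delta(a\imp b)$ and $r=(a\imp b)\imp u$ one gets $u\cdot\delta(r)=u$, hence $b\cdot r=b$, hence $\delta(a\land b)=a\cdot u=(a\land b)\cdot r=a\land b$.
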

\begin{proof}
	We need to prove that $\mathbf{A}_\delta$ satisfies the identity $x \cdot (x \imp y) \approx y \cdot (y \imp x)$. If $a,b \in \delta(A)$, then $a \cdot (a \imp_\delta b) = \delta(a \cdot (a \imp b)) = \delta(b \cdot (b \imp a)) =  b \cdot (b \imp_\delta a)$.  
\end{proof}

The reciprocal of the previous result does not hold, as we are going to show with the next proposition.
\begin{proposition}\label{prop:not_multiplicative_hoop_conuclear_image}
	There exists a non-multiplicative conucleus $\delta$ on $\mathbf{S}_3$ such that $(\mathbf{S}_3)_\delta$ is a hoop. 
\end{proposition}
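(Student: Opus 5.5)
The plan is to exhibit the conucleus explicitly and verify the required properties by hand. Since $\mathbf{S}_3$ has only four elements, this is a finite check. I will use the fact that an interior operator is determined by its image, $\delta(a) = \max\lbrace b \in \delta(A) : b \leq a\rbrace$. By Proposition~\ref{prop:conuclei_characterization}, such a $\delta$ is a conucleus exactly when the image $B = \delta(S_3)$ satisfies two conditions:
\begin{itemize}
\item $B$ is closed under $\cdot$;
\item $\delta(\top) \in B$ is a multiplicative unit on $B$.
\end{itemize}

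To choose $B$, note first that it must contain $0$ (item (4) of Lemma~\ref{lemma:conuclei_properties}). Also, $\delta(\top)$ must be idempotent, and the only idempotents of $\mathbf{S}_3$ are $0$ and $1$. Taking $\delta(\top)=1$, the candidate images are $\lbrace 0,1\rbrace$, $\lbrace 0,\frac13,1\rbrace$ and $S_3$; the set $\lbrace 0,\frac23,1\rbrace$ is excluded because $\frac23\cdot\frac23=\frac13$. The first candidate gives $\square$ and the last gives the identity, and both are multiplicative. I therefore take $B \coloneq \lbrace 0,\frac13,1\rbrace$. It is closed under multiplication because $\frac13\cdot\frac13 = 0$, and $1$ is its unit. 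The resulting map is
\begin{equation*}
\delta(0)=0,\quad \delta(\tfrac13)=\tfrac13,\quad \delta(\tfrac23)=\tfrac13,\quad \delta(1)=1,
\end{equation*}
which is a conucleus by the remarks above.

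Non-multiplicativity is a single computation:
\begin{equation*}
\delta(\tfrac23\cdot\tfrac23)=\delta(\tfrac13)=\tfrac13 \neq 0=\tfrac13\cdot\tfrac13=\delta(\tfrac23)\cdot\delta(\tfrac23).
\end{equation*}

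It remains to show that $(\mathbf{S}_3)_\delta$ is a hoop. By the previous proposition, items (1)–(3) already hold, so only the identity $a\cdot\delta(a\imp b)=b\cdot\delta(b\imp a)$ for $a,b\in B$ needs checking. I will compute $\imp_\delta$ on $B$:
\begin{itemize}
\item $\frac13\imp_\delta 0=\delta(\frac23)=\frac13$;
\item $0\imp_\delta \frac13=1$;
\item $1\imp_\delta b=b$;
\item $b\imp_\delta 1=1$.
\end{itemize}
The identity is trivial when $a=b$ or when one of $a,b$ equals $1$. The only remaining pair is $a=\frac13$, $b=0$, and there
\begin{equation*}
\tfrac13\cdot\tfrac13=0=0\cdot 1.
\end{equation*}
In fact, the table shows that $(\mathbf{S}_3)_\delta\cong\mathbf{S}_2$ via $\frac13\mapsto\frac12$, which gives the hoop property directly.

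No step is a real obstacle. The only point requiring thought is the choice of $B$: it must be closed under products but must not be a conuclear image compatible with multiplicativity. The enumeration above settles this choice, and every remaining verification is a finite table check.
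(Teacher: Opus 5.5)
Your proposal is correct and follows the paper's proof essentially verbatim. It uses the same image $B=\lbrace 0,\frac13,1\rbrace$, the same non-multiplicativity witness $\delta(\frac23)^2=0\neq\frac13=\delta((\frac23)^2)$, and the same single nontrivial hoop check $\frac13\cdot\delta(\frac13\imp 0)=0=0\cdot\delta(0\imp\frac13)$; your enumeration of candidate images and the observation $(\mathbf{S}_3)_\delta\cong\mathbf{S}_2$ are harmless extras not needed for the existence claim.
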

\begin{proof}
	Observe that $B \coloneq \lbrace 0, \frac{1}{3},1 \rbrace$ is closed under multiplication. Therefore, $\delta(x) \coloneq \max \lbrace b \in B: b \leq x \rbrace$ is a conucleus on $\mathbf{S}_3$.  To start, we are going to show that $(\mathbf{S}_3)_\delta$ is a hoop. By Proposition~\ref{prop:conuclei_characterization}, we want to prove that $a \cdot \delta(a \imp b) = b \cdot \delta(b \imp a)$ for all $a,b \in B$. If $a = 1$, $b = 1$ or $a = b$ the equality is immediate. On the other hand,
	\begin{equation*}
		\frac{1}{3} \cdot \delta\left(\frac{1}{3} \imp 0\right) = \frac{1}{3} \cdot \delta\left(\frac{2}{3}\right) = \left(\frac{1}{3}\right)^2 = 0 = 0 \cdot \delta\left(0 \imp \frac{1}{3}\right).	
	\end{equation*}
	To end, we need to show that $\delta$ is not multiplicative. Observe that
	\begin{equation*}
		\left(\delta\left(\frac{2}{3}\right)\right)^2 = \left(\frac{1}{3}\right)^2 = 0 \neq \frac{1}{3} = \delta\left(\frac{1}{3}\right) = \delta\left(\left(\frac{2}{3}\right)^2\right).
	\end{equation*}
	 
\end{proof}

The following result is analogous to the Glivenko property relative to a nucleus (see, for example, \cite{ZHAO2013}). The Glivenko property characterizes when the image of a nucleus on $\mathbf{A}$, which can be endowed with a hoop structure, is a homomorphic image of $\mathbf{A}$. Observe that, apart from the fact that a conuclear image is not necessarily a hoop, the notion of a homomorphism between a hoop and a conuclear image is well defined, because both structures share the same language. 

\begin{theorem}\label{thm:conuclei_glivenko}
	Let $\mathbf{A}$ be a hoop and $\delta$ a conucleus on $\mathbf{A}$. The following are equivalent:
	\begin{enumerate}[label=\rm{(\roman*)}]
		\item $\delta(a) = a \cdot \delta(\top)$ for all $a \in A$;
		\item $\delta$ is a homomorphism from $\mathbf{A}$ onto $\mathbf{A}_\delta$.
	\end{enumerate}
	
	If the equivalent conditions {\rm (i)} and {\rm (ii)} hold, then $\mathbf{A} \slash \ker (\delta) \cong \mathbf{A}_\delta$. Moreover, the filter corresponding to the congruence $\ker (\delta)$ is the set $\lbrace a \in A: \delta(\top) \leq a \rbrace$.
\end{theorem}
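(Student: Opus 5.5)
For (i) $\Rightarrow$ (ii), write $e \coloneq \delta(\top)$; by Lemma~\ref{lemma:conuclei_properties}(7), $e^2 = e$. Under (i), $\delta(a) = a \cdot e$, so $\delta(\top) = e$ is sent to the top of $\mathbf{A}_\delta$. For products, $\delta(a \cdot b) = a b e = (ae)(be) = \delta(a)\cdot\delta(b)$ by idempotency. For implications we must show
\[
\delta(a \imp b) = \delta(a) \imp_\delta \delta(b) = \delta(ae \imp be),
\]
that is, $(a \imp b)\cdot e = (ae \imp be)\cdot e$.

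For the inequality $\leq$: we have $(a\imp b)\cdot ae \leq be$, so $a \imp b \leq ae \imp be$, and then multiply by $e$.

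For $\geq$: we have $ae \imp be \leq ae \imp b = e \imp (a \imp b)$ by Proposition~\ref{prop:hoops_properties}(5). So it suffices to show $(e \imp c)\cdot e \leq c \cdot e$ for every $c$. Since $e$ is idempotent, $(e\imp c)\cdot e = (e \imp c)\cdot e \cdot e = (e \wedge c)\cdot e$, because $e\cdot(e\imp c)=e\wedge c$ by Proposition~\ref{prop:hoop_order}. Also $(e\wedge c)\cdot e \leq c \cdot e$ by monotonicity. This gives $\geq$. Surjectivity is immediate, since $\delta$ maps onto $\delta(A)$.

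For (ii) $\Rightarrow$ (i), the homomorphism gives
\[
\delta(a \imp \delta(a)) = \delta(a) \imp_\delta \delta(\delta(a)) = \delta(\delta(a)\imp\delta(a)) = \delta(\top).
\]
Hence $\delta(\top) \leq a \imp \delta(a)$, i.e.\ $a\cdot\delta(\top)\leq\delta(a)$. Combined with Lemma~\ref{lemma:conuclei_properties}(8), this yields equality. This is the slickest route; the only subtle point is checking that the homomorphism condition applies to the pair $a, \delta(a)$ and uses $\delta\delta=\delta$.

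For the final claims: $\delta$ is a surjective homomorphism, so the first isomorphism theorem gives $\mathbf{A}/\ker(\delta) \cong \mathbf{A}_\delta$. This holds regardless of whether $\mathbf{A}_\delta$ is a hoop: it is an algebra of the same type, and the quotient is a hoop, so $\mathbf{A}_\delta$ is in fact a hoop. The filter of a congruence $\theta$ is the class of $\top$, namely $\{a : \delta(a) = \delta(\top)\} = \{a : ae = e\}$. If $ae=e$ then $e \leq a$. Conversely, if $e\leq a$ then $ae = a\wedge e = e$, using the idempotency of $e$ as in Example~\ref{ex:conuclei}. So the filter is $\{a : \delta(\top)\leq a\}$.

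The main obstacle I expect is the $\geq$ inequality for implication in (i) $\Rightarrow$ (ii), which relies on the idempotency trick above.
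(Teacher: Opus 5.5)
Your proof is correct and follows essentially the same route as the paper. The paper likewise gets multiplicativity from idempotency of $\delta(\top)$, proves the two inequalities for $\imp$ by residuation, and gets the converse from $\delta(a \imp \delta(a)) = \delta(\top)$ together with Lemma~\ref{lemma:conuclei_properties}(8). The only cosmetic differences are that for the $\geq$ inequality you pass through $ae \imp b = e \imp (a \imp b)$ instead of residuating $a$ directly, and for the filter you use $ae = a \wedge e$ instead of the interior-operator properties of $\delta$.
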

\begin{proof}
	Let us assume that $\delta(a) = a \cdot \delta(\top)$ for all $a \in A$. Given $a,b \in A$, we need to show that $\delta(a \cdot b) = \delta(a) \cdot \delta(b)$ and $\delta(a \imp b) = \delta(\delta(a) \imp \delta(b))$.
	
	The first equality is immediate. The inequality $\delta(a \imp b) \leq \delta(\delta(a) \imp \delta(b))$ is a straightforward consequence of the inequality $\delta(a \imp b) \leq \delta(a) \imp \delta(b)$ (see item (6) in Lemma~\ref{lemma:conuclei_properties}). On the other hand,
	\begin{equation*}
		\begin{aligned}
			\delta(\delta(a) \imp \delta(b)) \cdot a & = ((a \cdot \delta(\top)) \imp (b \cdot \delta(\top))) \cdot \delta(\top) \cdot a \\
			& \leq b \cdot \delta(\top) \\
			& \leq b.
		\end{aligned}
	\end{equation*} 	
	Hence, by the residuation property and the fact that $\delta(\top)^2 = \delta(\top)$,
	\begin{equation*}
		\begin{aligned}
			\delta(\delta(a) \imp \delta(b)) & = \delta(\delta(a) \imp \delta(b)) \cdot \delta(\top) \\
			& \leq (a \imp b) \cdot \delta(\top) \\
			& = \delta(a \imp b).
		\end{aligned}
	\end{equation*}
	
	Now assume that $\delta$ is a homomorphism from $\mathbf{A}$ onto $\mathbf{A}_\delta$. Given $a \in A$, we already know that $\delta(a) \leq a \cdot \delta(\top)$. On the other hand, using the hypothesis, we have that
	\begin{equation*}
		\begin{aligned}
			\delta(\top) & = \delta(\delta(a) \imp \delta(\delta(a))) \\
			& = \delta(a \imp \delta(a)) \\
			& \leq a \imp \delta(a). 
		\end{aligned}
	\end{equation*}
	
	If $\delta$ is a homomorphism, then it is clearly surjective and $\mathbf{A} \slash \ker(\delta) \cong \mathbf{A}_\delta$, where $\ker(\delta) = \lbrace (a,b) \in A^2: \delta(a) = \delta(b) \rbrace$. The filter corresponding to $\ker(\delta)$ is $\top \slash \ker(\delta) = \lbrace a \in A: \delta(a) = \delta(\top) \rbrace = \lbrace a \in A: \delta(\top) \leq a \rbrace$. 
	 
\end{proof}

\section{Conuclei defined by terms}\label{sec:conuclei_defined_by_terms}

Since terms can be interpreted on arbitrary structures of the same language, the notion of a conucleus defined by a term arises naturally. A term $t(x)$ in the language of (bounded) hoops is said to be a \emph{conucleus on a variety} of (bounded) hoops $\mathcal{V}$ if the interpretation $t^\mathbf{A}$ is a conucleus on $\mathbf{A}$ for all $\mathbf{A} \in \mathcal{V}$. We already showed in Example~\ref{ex:conuclei} that the identity term $x$ defines a conucleus on every hoop $\mathbf{A}$. In the context of bounded hoops, another example is the term $\bot$. Taking this into account, we say that a conucleus $t(x)$ on a variety of hoops (resp. bounded hoops) $\mathcal{V}$ is \emph{trivial} if $\mathcal{V} \vDash t(x) \approx x$ (resp. $\mathcal{V} \vDash t(x) \approx s(x)$ for some $s(x) \in \lbrace \bot,x \rbrace$). Observe that the condition of being a conucleus on a variety can be expressed with identities. Specifically, $t(x)$ is a conucleus on a variety of (bounded) hoops $\mathcal{V}$ if and only if $\mathcal{V}$ satisfies the identities $t(x) \imp x \approx \top$, $t(x \land y) \imp t(x) \approx \top$, $t(x) \approx t(t(x))$, $(t(x) \cdot t(y)) \imp t(x \cdot y) \approx \top$ and $t(x) \cdot t(\top) \approx t(x)$. 

In this section we provide some examples of terms that define conuclei on a variety of hoops or bounded hoops. The first example that will be given is in the context of the variety of Wajsberg hoops $\mathcal{WH}$. We are going to prove that the only term (up to equivalence) that defines a conucleus on the variety $\mathcal{WH}$ is the term $x$ (Proposition~\ref{prop:WH_conuclei}). It follows that the only terms (up to equivalence) that define a conucleus on the variety $\mathcal{MV}$ are the terms $x$ and $\bot$ (Corollary~\ref{cor:MV_conuclei}). Another example of a variety that only has trivial conuclei is the variety of cancellative hoops $\mathsf{V}(\mathbf{S}^\omega)$. The first example of a nontrivial conucleus on a variety of hoops will be given in the context of varieties of $k$-potent hoops. We end this section showing all the terms (up to equivalence) that define conuclei on the varieties $\mathsf{V}(\mathbf{S}_5)$ y $\mathsf{V}(\mathbf{S}_5^+)$. In both cases we obtain nontrivial conuclei. On the other hand, these examples show us that calculating all terms that define conuclei on a variety of Wajsberg hoops or MV-algebras can result in a nontrivial task. Taking this into account, we are going to restrict ourselves to the study of terms that define multiplicative conuclei on varieties of Wajsberg hoops, basic hoops and BL-algebras. This will be the main topic of the following sections.

We start with a general result that will be very useful for our examples. The following proposition relates the conuclei on a variety generated by an MV-algebra and the conuclei on the variety generated by the corresponding Wasjberg hoop with least element.
\begin{proposition}\label{prop:MV_conuclei_terms}
	Let $\mathbf{A}$ be a Wajsberg hoop with least element and let $t(x)$ be a term in the language $\lbrace \cdot,\imp,\bot,\top \rbrace$. The following are equivalent:
	\begin{enumerate}[label=\rm{(\roman*)}]
		\item $t(x)$ is a conucleus on $\mathsf{V}(\mathbf{A}^+)$;
		\item $\mathsf{V}(\mathbf{A}^+) \vDash t(x) \approx \bot$ or $\mathsf{V}(\mathbf{A}^+) \vDash t(x) \approx \overline{t}(x)$, where $\overline{t}(x)$ is a term in the language $\lbrace \cdot,\imp,\top \rbrace$ that is a conucleus on $\mathsf{V}(\mathbf{A})$.
	\end{enumerate}
\end{proposition}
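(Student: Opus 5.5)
The proof splits according to the value of $t(\top)$. By the earlier proposition on one-variable terms, either $t(\top)\approx\bot$ or $t(\top)\approx\top$ in every bounded hoop.

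\emph{Case $t(\top)\approx\bot$.} Suppose $t(x)$ is a conucleus on $\mathsf{V}(\mathbf{A}^+)$. For every $a$ in an algebra of the variety we have $t(a)=t(a)\cdot t(\top)=t(a)\cdot\bot=\bot$. Hence $\mathsf{V}(\mathbf{A}^+)\vDash t(x)\approx\bot$, which is the first alternative of (ii).

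\emph{Case $t(\top)\approx\top$.} Here I would produce a $\{\cdot,\imp,\top\}$-term $\overline{t}(x)$ with $\mathsf{V}(\mathbf{A}^+)\vDash t(x)\approx\overline{t}(x)$.

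1. Since $\mathbf{A}$ is a Wajsberg hoop with least element, it is a nontrivial totally ordered Wajsberg hoop or, in general, $\mathbf{A}^+$ is an MV-algebra. So $\mathbf{A}^+\in\mathcal{MV}$.
2. Proposition~\ref{prop:MVterm_description} then gives $\overline{t}$ with $\mathcal{MV}\vDash t\approx\overline{t}$, and in particular $\mathsf{V}(\mathbf{A}^+)\vDash t\approx\overline{t}$.
3. It remains to check that $\overline{t}$ is a conucleus on $\mathsf{V}(\mathbf{A})$. Being a conucleus is expressed by the identities $t(x)\imp x\approx\top$, $t(x\land y)\imp t(x)\approx\top$, $t(t(x))\approx t(x)$, $(t(x)\cdot t(y))\imp t(x\cdot y)\approx\top$ and $t(x)\cdot t(\top)\approx t(x)$, all written in the language $\{\cdot,\imp,\top\}$ once $t$ is replaced by $\overline{t}$.
4. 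For a set of identities in the hoop language, validity in $\mathsf{V}(\mathbf{A})$ is equivalent to validity in $\mathbf{A}$, and validity in $\mathsf{V}(\mathbf{A}^+)$ is equivalent to validity in $\mathbf{A}^+$. The $\{\cdot,\imp,\top\}$-reduct of $\mathbf{A}^+$ is exactly $\mathbf{A}$, and $\bot$ does not occur in these identities.
5. Therefore $\overline{t}$ is a conucleus on $\mathsf{V}(\mathbf{A}^+)$ if and only if it is one on $\mathsf{V}(\mathbf{A})$, which gives (i)$\Rightarrow$(ii).

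For (ii)$\Rightarrow$(i), the term $\bot$ is a conucleus on every bounded hoop. If instead $t\approx\overline{t}$ with $\overline{t}$ a conucleus on $\mathsf{V}(\mathbf{A})$, the transfer in steps 4 and 5, run in reverse, shows $\overline{t}$ is a conucleus on $\mathsf{V}(\mathbf{A}^+)$, and hence so is $t$.

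The main obstacle is step 2. Proposition~\ref{prop:MVterm_description} must apply to all of $\mathsf{V}(\mathbf{A}^+)\subseteq\mathcal{MV}$, which holds because it is stated as an $\mathcal{MV}$ identity. The remaining care point is the reduct transfer in step 4, namely that identities without $\bot$ hold in $\mathbf{A}^+$ exactly when they hold in $\mathbf{A}$.
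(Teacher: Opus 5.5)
Your proof is correct and follows the paper's own argument. You split on $t(\top)\in\{\bot,\top\}$, invoke Proposition~\ref{prop:MVterm_description}, and transfer the conucleus conditions between $\mathbf{A}^+$ and its hoop reduct $\mathbf{A}$; you make explicit, via the defining identities, the transfer that the paper leaves implicit in $t^{\mathbf{A}^+}=\overline{t}^{\mathbf{A}^+}=\overline{t}^{\mathbf{A}}$. The only slip is the aside in step 1 that $\mathbf{A}$ is totally ordered, which need not hold, but it is harmless since all you use is $\mathbf{A}^+\in\mathcal{MV}$.
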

\begin{proof}
	Assume first that $t(x)$ is a conucleus on $\mathsf{V}(\mathbf{A}^+)$. If $t(\top) \approx \bot$, then $\mathsf{V}(\mathbf{A}^+) \vDash t(x) \approx \bot$. Suppose now that $t(\top) \approx \top$. By Proposition~\ref{prop:MVterm_description}, there exists some term $\overline{t}(x)$ in the language $\lbrace \cdot,\imp,\top \rbrace$ such that $\mathcal{MV} \vDash t(x) \approx \overline{t}(x)$. Therefore, $t^{\mathbf{A}^+} = \overline{t}^{\mathbf{A}^+} = \overline{t}^\mathbf{A}$. Hence, $\overline{t}^\mathbf{A}$ is a conucleus on $\mathbf{A}$. 
	
	Let us prove the implication (ii) $\Rightarrow$ (i). If $\mathsf{V}(\mathbf{A}^+) \vDash t(x) \approx \bot$, then there is nothing to prove. Assume that $\mathsf{V}(\mathbf{A}^+) \vDash t(x) \approx \overline{t}(x)$, where $\overline{t}(x)$ is a term in the language $\lbrace \cdot,\imp,\top \rbrace$ that is a conucleus on $\mathsf{V}(\mathbf{A})$. Since $t^{\mathbf{A}^+} = \overline{t}^{\mathbf{A}^+} = \overline{t}^\mathbf{A}$, we obtain that $t^{\mathbf{A}^+}$ is a conucleus on $\mathbf{A}^+$. This concludes the proof.  
\end{proof}

We start with the study of conuclei on the variety $\mathcal{WH}$. Recall that $\mathcal{WH} = \mathsf{V}(\mathbf{S})$, where $\mathbf{S}$ is the totally ordered Wajsberg hoop defined in Example~\ref{ex:hoops}. 

\begin{proposition}\label{prop:WH_conuclei}
	$t(x)$ is a conucleus on $\mathcal{WH}$ if and only if it is trivial.
\end{proposition}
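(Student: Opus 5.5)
The easy direction is immediate: if $\mathcal{WH} \vDash t(x) \approx x$, then $t^\mathbf{A}$ is the identity map on every $\mathbf{A} \in \mathcal{WH}$, and the identity is a conucleus. For the converse, I would exploit $\mathcal{WH} = \mathsf{V}(\mathbf{S})$ (Theorem~\ref{thm:proper_WH_varieties}). Assume $t(x)$ is a conucleus on $\mathcal{WH}$. It suffices to show $t^\mathbf{S}(a) = a$ for all $a \in [0,1]$, since then $\mathbf{S} \vDash t(x) \approx x$, hence $\mathcal{WH} \vDash t(x) \approx x$. The function $f \coloneq t^\mathbf{S}$ is a McNaughton function with $f(1) = 1$ (Theorem~\ref{thm:mcnaughton}). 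In particular $f$ is continuous and $\delta(\top) = f(1) = 1$.

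The key structural step is to study the fixed point set $D \coloneq f([0,1]) = \lbrace a : f(a) = a \rbrace$. Continuity of $f$ makes $D$ a compact subset of $[0,1]$ containing $1$. It is closed under the Łukasiewicz product by Lemma~\ref{lemma:conuclei_properties}(5). Moreover $f(a) = \max\lbrace b \in D : b \leq a\rbrace$. Since $f$ is piecewise linear with finitely many pieces and $f(a) \leq a$, the fixed point set is a finite union of closed intervals and isolated points.

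Next I would show $D = [0,1]$ by excluding gaps. Suppose $(p,q)$ is a maximal open interval missing $D$, with $p,q \in D$; then $f(a) = p$ on $[p,q)$. If $q < 1$, then $q$ must be the left endpoint of a nondegenerate interval of fixed points. Otherwise $q$ would be isolated in $D$, and continuity together with $f(a) \leq a$ would force $f$ to jump at $q$. Concretely, $f = p$ on $[p,q)$ but $f(q) = q$ contradicts continuity. The same argument works for any gap: $f$ is constant $p$ on $[p,q)$ and $f(q) = q > p$, which contradicts continuity of $f$ at $q$. Hence $D$ has no gaps, so $D = [\min D, 1]$. By Lemma~\ref{lemma:conuclei_properties}(4) (applied in $\mathbf{S}$, which has least element $0$), $0 \in D$, so $D = [0,1]$ and $f$ is the identity.

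The main obstacle I expect is making sure the continuity argument is airtight. The jump at the right endpoint of a gap is exactly what continuity forbids, so the whole proof reduces to continuity of McNaughton functions together with $\delta(0) = 0$. If that step needs extra care (say $q = 1$), note that $f(1) = 1$ and $f = p < 1$ on $[p,1)$ is still a discontinuity. This suggests the only information really used is that $t^\mathbf{S}$ is a continuous interior operator on $[0,1]$ fixing $0$, so one might even avoid multiplicativity altogether.
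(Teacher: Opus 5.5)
Your proof is correct, and it takes a genuinely different and much shorter route than the paper.

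\textbf{The paper's route.} It argues through the finite subalgebras. It uses that $t^{\mathbf{S}}(\frac{m}{m+1})$ lies in $\langle \frac{m}{m+1}\rangle^{\mathbf{S}} = S_{m+1}$. It then runs an induction on $m$, using monotonicity and closure of the image under multiplication, to show that either every coatom $\frac{m}{m+1}$ is sent to $0$ or every one is fixed. Continuity rules out the first case. Closure under multiplication then shows that all rationals are fixed, and density plus continuity finish the proof.

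\textbf{Your route.} You use only that $f = t^{\mathbf{S}}$ is a continuous interior operator with $f(1)=1$ and $f(0)=0$. Your gap argument is sound: $f$ is constantly $p$ on $[p,q)$ while $f(q)=q>p$, which breaks continuity at $q$. It can be compressed further. By the intermediate value theorem, $f([0,1]) \supseteq [f(0),f(1)] = [0,1]$, and every point of the image is fixed by idempotence, so $f$ is the identity.

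\textbf{What each buys.} Your route gains generality. The conucleus axioms $\delta(a)\cdot\delta(b) \leq \delta(a\cdot b)$ and $\delta(a)\cdot\delta(\top)=\delta(a)$ are never used. So you actually prove that any term defining an interior operator on $\mathcal{WH}$ is trivial. Adding the observation that $t^{\mathbf{S}^+}(1)\in\{0,1\}$, the same argument also yields Corollary~\ref{cor:MV_conuclei} directly, without Proposition~\ref{prop:MV_conuclei_terms}. The paper's argument is more algebraic and ties the result to the behaviour on the chains $\mathbf{S}_{m+1}$. But it still needs continuity twice, so it does not avoid the analytic input; yours isolates exactly what is needed.

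\textbf{Superfluous parts of your write-up.} Several remarks are unnecessary: that $D$ is closed under products, that $D$ is a finite union of intervals and points, and the claim that $q$ ``must be the left endpoint of a nondegenerate interval of fixed points''. They are harmless, since the sentence beginning ``Concretely'' supplies the complete argument for every gap.
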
	
\begin{proof}
	We are going to restrict the proof to the implication (i) $\Rightarrow$ (ii). Let $t(x)$ be a term such that $t^\mathbf{S}$ is a conucleus on $\mathbf{S}$. For all $m \in \mathbb{N}$, the following hold:
	\begin{enumerate}[label=-]
		\item $t^\mathbf{S}(\frac{m}{m+1}) = \frac{i}{m+1}$ for some $i \in \lbrace 0,\dots,m \rbrace$;
		\item $\frac{m}{m+2} < \frac{m}{m+1}$.
	\end{enumerate}	
	The first item is a consequence of the facts that $t^\mathbf{S} \leq x^\mathbf{S}$ and $\langle \frac{m}{m+1} \rangle^\mathbf{S} = S_{m+1}$, where $\langle \frac{m}{m+1} \rangle^\mathbf{S}$ is the subalgebra generated by $\frac{m}{m+1}$. The second item is immediate (recall that the hoop order on $\mathbf{S}$ is the usual order on $[0,1]$).
	
	Taking into consideration the previous remarks, we are going to show that, for each $m \in \mathbb{N}$,
	\begin{enumerate}[label=-]
		\item if $t^\mathbf{S}(\frac{m}{m+1}) = 0$, then $t^\mathbf{S}(\frac{m+1}{m+2}) = 0$;
		\item if $t^\mathbf{S}(\frac{m}{m+1}) = \frac{m}{m+1}$, then $t^\mathbf{S}(\frac{m+1}{m+2}) = \frac{m+1}{m+2}$.
	\end{enumerate}	
	First assume that $t^\mathbf{S}(\frac{m}{m+1}) = 0$. As a consequence of the previous remarks and the monotonicity of $t^\mathbf{S}$, $t^\mathbf{S}(\frac{i}{m+2}) = 0$ for all $i \in \lbrace 0,\dots,m \rbrace$. If $t^\mathbf{S}(\frac{m+1}{m+2}) = \frac{i}{m+2}$ for some $0 < i \leq m$, then $\frac{i}{m+2} = t^\mathbf{S}(\frac{i}{m+2}) = 0$ and we reached a contradiction. Therefore, $t^\mathbf{S}(\frac{m+1}{m+2}) = 0$ or $t^\mathbf{S}(\frac{m+1}{m+2}) = \frac{m+1}{m+2}$. If $t^\mathbf{S}(\frac{m+1}{m+2}) = \frac{m+1}{m+2}$ then, since $\frac{m}{m+2} = \left( \frac{m+1}{m+2} \right)^2$ and the image of $t^\mathbf{S}$ is closed under multiplication, $\frac{m}{m+2} = t^\mathbf{S}(\frac{m}{m+2}) = 0$, which is a contradiction. Hence, $t^\mathbf{S}(\frac{m+1}{m+2}) = 0$ and we proved the first item. Now assume that $t^\mathbf{S}(\frac{m}{m+1}) = \frac{m}{m+1}$. If $t^\mathbf{S}(\frac{m+1}{m+2}) < \frac{m+1}{m+2}$, taking into consideration the previous remarks , $t^\mathbf{S}(\frac{m+1}{m+2}) \leq \frac{m}{m+2} < \frac{m}{m+1} = t^\mathbf{S}(\frac{m}{m+1})$. This contradicts the monotonicity of $t^\mathbf{S}$, because $\frac{m}{m+1} < \frac{m+1}{m+2}$.
	
	We now continue with the proof of the main result. Observe that $t^\mathbf{S}(\frac{1}{2}) \in \lbrace 0, \frac{1}{2} \rbrace$. If $t^\mathbf{S}(\frac{1}{2}) = 0$, as a consequence of the previous remarks, $t^\mathbf{S}(\frac{m}{m+1}) = 0$ for all $m \in \mathbb{N}$. Since the sequence $(\frac{m}{m+1})_{m \in \mathbb{N}}$ is strictly increasing and converges to $1$, we conclude that
	\begin{equation*}
		t^\mathbf{A}(a) =
		\begin{cases}
			0 & \text{if $a \neq 1$;} \\
			1 & \text{if $a = 1$.}
		\end{cases}
	\end{equation*}
	This is a contradiction, because $t^\mathbf{S}$ is a McNaughton function and hence, continuous. This contradiction was obtained from the assumption that $t^\mathbf{S}(\frac{1}{2}) = 0$. Therefore, $t^\mathbf{S}(\frac{1}{2}) = \frac{1}{2}$ and by the previous remarks $t^\mathbf{S}(\frac{m}{m+1}) = \frac{m}{m+1}$ for all $m \in \mathbb{N}$. The sequence  $(\frac{m}{m+1})_{m \in \mathbb{N}}$ is the sequence of coatoms of each $\mathbf{S}_{m+1}$. Since every $S_{m+1}$ is equal to the set of powers of its coatom and the image of a conucleus is closed under multiplication, $\mathbb{Q} \cap [0,1] = \cup_{m \in \mathbb{N}} S_{m+1}$ is a subset of $t^\mathbf{S}(S)$ which is dense in $[0,1]$. Since $t^\mathbf{S}$ is continuous, $\mathbf{S} \vDash t(x) \approx x$.  
\end{proof}

The next result is a consequence of the previous proposition and Proposition~\ref{prop:MV_conuclei_terms}. Recall that $\mathcal{MV}$ is generated, as a variety, by $\mathbf{S}^+$.

\begin{corollary}\label{cor:MV_conuclei}
	$t(x)$ is a conucleus on $\mathcal{MV}$ if and only if it is trivial.
\end{corollary}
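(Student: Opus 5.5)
The plan is to derive the corollary from Proposition~\ref{prop:MV_conuclei_terms} applied with $\mathbf{A} \coloneq \mathbf{S}$, and then use Proposition~\ref{prop:WH_conuclei} to settle the $\lbrace \cdot,\imp,\top \rbrace$-part. The Wajsberg hoop $\mathbf{S}$ has least element $0$, and $\mathbf{S}^+$ generates $\mathcal{MV}$. So $\mathsf{V}(\mathbf{S}^+) = \mathcal{MV}$ and $\mathsf{V}(\mathbf{S}) = \mathcal{WH}$ by Theorem~\ref{thm:proper_WH_varieties}(1).

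For the nontrivial implication, let $t(x)$ be a conucleus on $\mathcal{MV}$. Proposition~\ref{prop:MV_conuclei_terms} gives two cases.
\begin{enumerate}[label=\rm{(\alph*)}]
\item $\mathcal{MV} \vDash t(x) \approx \bot$. Then $t$ is trivial by definition.
\item $\mathcal{MV} \vDash t(x) \approx \overline{t}(x)$ for some $\lbrace \cdot,\imp,\top \rbrace$-term $\overline{t}(x)$ that is a conucleus on $\mathcal{WH}$. By Proposition~\ref{prop:WH_conuclei}, $\overline{t}$ is trivial on $\mathcal{WH}$, so $\mathbf{S} \vDash \overline{t}(x) \approx x$. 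Since $\overline{t}$ contains no $\bot$, its interpretation in $\mathbf{S}^+$ coincides with its interpretation in $\mathbf{S}$, so $\mathbf{S}^+ \vDash \overline{t}(x) \approx x$. Because $\mathbf{S}^+$ generates $\mathcal{MV}$, we get $\mathcal{MV} \vDash t(x) \approx x$.
\end{enumerate}

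For the converse, suppose $t$ is trivial. Then $\mathcal{MV} \vDash t(x) \approx x$ or $\mathcal{MV} \vDash t(x) \approx \bot$. The map $x$ is a conucleus on every hoop. The map $\bot$ is also a conucleus: it is monotone, idempotent, below the identity, and satisfies $\bot\cdot\bot \leq \bot$ and $\bot \cdot \delta(\top) = \bot$. Both properties transfer to $t$ along the equivalence.

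No step is a real obstacle. The only point needing care is the passage from $\mathbf{S}$ to $\mathbf{S}^+$ in case (b), which is exactly the observation already used in the proof of Proposition~\ref{prop:MV_conuclei_terms}.
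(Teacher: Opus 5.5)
Your proposal is correct and is the paper's argument. The paper states the corollary as an immediate consequence of Proposition~\ref{prop:MV_conuclei_terms} with $\mathbf{A}=\mathbf{S}$ (using $\mathsf{V}(\mathbf{S}^+)=\mathcal{MV}$) together with Proposition~\ref{prop:WH_conuclei}. You simply spell out the routine details: the transfer of $\mathbf{S}\vDash\overline{t}(x)\approx x$ to $\mathbf{S}^+$, and the converse direction.
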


Recall that a hoop $\mathbf{A}$ is cancellative if it satisfies the identity $x \imp (x \cdot y) \approx y$. It is known that the variety of cancellative hoops is generated by $\mathbf{S}^\omega$ (see, for example, \cite{FERREIRIM1992}).

\begin{proposition}\label{prop:cancellative_conuclei}
	$t(x)$ is a conucleus on $\mathsf{V}(\mathbf{S}^\omega)$ if and only if it is trivial.
\end{proposition}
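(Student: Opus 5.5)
Only the direction ``conucleus implies trivial'' needs proof, since the identity term is clearly a conucleus. Because $\mathsf{V}(\mathbf{S}^\omega)$ is generated by $\mathbf{S}^\omega$, it suffices to show that if $t^{\mathbf{S}^\omega}$ is a conucleus on $\mathbf{S}^\omega$, then $t^{\mathbf{S}^\omega}(n) = n$ for all $n \in \mathbb{N}_0$. Recall that in $\mathbf{S}^\omega = \langle \mathbb{N}_0,+,\imp,0\rangle$ the top is $0$ and the hoop order is the reverse of the usual order on $\mathbb{N}_0$. So $\delta(n) \leq n$ in the hoop sense means $\delta(n) \geq n$ as integers. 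Moreover $\delta(0)=0$, since $t(\top)\approx\top$.

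The first step is to describe $t^{\mathbf{S}^\omega}$ explicitly. I will prove by induction on the complexity of $t$ that there is some $k \in \mathbb{N}_0$ with $t^{\mathbf{S}^\omega}(n) = kn$ for all $n$. The cases $x$ and $\top$ give $k=1$ and $k=0$. Products give $k_1+k_2$. For implications, $k_1 n \imp k_2 n = \max\{(k_2-k_1)n,0\}$, which is again of the form $k n$ with $k=\max\{k_2-k_1,0\}$. Alternatively, one could use Theorem~\ref{thm:local_identities}(2) applied at $1$ (with $m=1$, $i=1$) via the embedding of $\mathbf{S}^\omega$ into $\mathbf{S}_1^\omega$ near the top, but the direct induction is simpler.

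Next, I will apply the conucleus axioms to the linear function $n \mapsto kn$. The interior condition gives $kn \geq n$ for all $n$, so $k \geq 1$. Idempotency gives $k^2 n = kn$, so $k^2=k$, and therefore $k=1$. Hence $t^{\mathbf{S}^\omega}(n)=n$ for every $n$, so $\mathbf{S}^\omega \vDash t(x)\approx x$, and consequently $\mathsf{V}(\mathbf{S}^\omega)\vDash t(x)\approx x$.

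I expect no serious obstacle. The only point needing care is the reversed order convention in $\mathbf{S}^\omega$, together with checking that the truncated subtraction in the $\imp$ case preserves linearity in $n$. Linearity is preserved because the sign of $(k_2-k_1)n$ is independent of $n>0$.
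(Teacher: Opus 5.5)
Your proof is correct, but it takes a different route from the paper's.

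You compute all unary term operations of $\mathbf{S}^\omega$ by induction on $t$: they are exactly $n \mapsto kn$, i.e.\ the powers $x^k$. Then deflation ($kn \geq n$) and idempotence ($k^2 = k$) alone force $k = 1$.

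The paper argues structurally instead:
\begin{enumerate}[label=\rm{(\arabic*)}]
\item By semi-cancellativity, every nontrivial one-generated subalgebra of $\mathbf{S}^\omega$ is isomorphic to $\mathbf{S}^\omega$.
\item For the coatom $c$, one has $t^{\mathbf{S}^\omega}(c) = c^n$ with $n \geq 1$.
\item Since the image of a conucleus is closed under multiplication, $t^{\mathbf{S}^\omega}$ fixes every $c^{nm}$. So it is the identity on $\langle c^n \rangle^{\mathbf{S}^\omega} \cong \mathbf{S}^\omega$.
\item The identity $t(x) \approx x$ then transfers to $\mathbf{S}^\omega$ along this isomorphism.
\end{enumerate}

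Your argument is more elementary and self-contained. It does not appeal to Proposition~\ref{prop:to_wajsberg_is_semi_cancellative}, and it never uses the axiom $\delta(a)\cdot\delta(b) \leq \delta(a\cdot b)$. It therefore proves a slightly stronger fact: any term that is merely deflationary and idempotent on $\mathbf{S}^\omega$ is already equivalent to $x$ on $\mathsf{V}(\mathbf{S}^\omega)$. It also gives an explicit description of the one-variable term operations of $\mathbf{S}^\omega$, which may be useful elsewhere.

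The paper's version avoids computing term functions entirely. It also mirrors the coatom-and-powers argument used for $\mathbf{S}$ in Proposition~\ref{prop:WH_conuclei}, which keeps the two proofs uniform in style.
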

\begin{proof}
	As a consequence of Proposition~\ref{prop:to_wajsberg_is_semi_cancellative}, $\langle a \rangle^{\mathbf{S}^\omega} \cong \mathbf{S}^\omega$ for all $a \in S^\omega \backslash \lbrace \top \rbrace$. If $c$ is the coatom of $\mathbf{S}^\omega$, then $t^{\mathbf{S}^\omega}(c) = c^n$ for some $n \geq 1$. Particularly, since the image of $t^{\mathbf{S}^\omega}$ is closed under multiplication, $t^{\mathbf{S}^\omega}(c^{nm}) = c^{nm}$ for all $m \geq 0$. Therefore, $\mathbf{S}^\omega \cong \langle c^n \rangle^{\mathbf{S}^\omega} \vDash t(x) \approx x$.  
\end{proof}

We have given several examples of varieties that only have trivial conuclei. This is not always the case, as we will show next. Recall that a hoop is \emph{$k$-potent} ($k \in \mathbb{N}$) if it satisfies the identity $x^k \approx x^{k+1}$. Let $\mathcal{H}_k$ denote the variety of $k$-potent hoops.
\begin{proposition}
	If $k \geq 2$, then $x^k$ is a nontrivial conucleus on $\mathcal{H}_k$.
\end{proposition}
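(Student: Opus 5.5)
The plan has two parts. First, $x^k$ is a conucleus on every $\mathbf{A} \in \mathcal{H}_k$. Second, $\mathcal{H}_k \nvDash x^k \approx x$.

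For the first part, I will derive it from Example~\ref{ex:conuclei}, which already states that $\delta_n = (x^n)^\mathbf{A}$ is a conucleus whenever $\mathbf{A} \vDash x^n \approx x^{n+1}$. For completeness, the verification runs as follows, for $a,b \in A$ in a $k$-potent hoop.
\begin{enumerate}[label=\rm{(\alph*)}]
\item Decreasing: $a^k \leq a$ by Proposition~\ref{prop:hoops_properties}(3).
\item Monotone: $a \leq b$ gives $a^k \leq b^k$ by Proposition~\ref{prop:hoops_properties}(2).
\item Idempotent: $(a^k)^k = a^{k^2} = a^k$, by iterating $a^k = a^{k+1}$, so that $a^m = a^k$ for all $m \geq k$.
\item Commutativity of the monoid gives $a^k b^k = (ab)^k$, so $\delta$ is even multiplicative.
\item Since $\top^k = \top$, we get $a^k \cdot \top^k = a^k$.
\end{enumerate}

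For nontriviality, the variety $\mathcal{H}_k$ has no $\bot$ in its language, so I must exhibit a $k$-potent hoop on which $x^k \neq x$. I take $\mathbf{S}_k$ from Example~\ref{ex:hoops}. Its powers are $\left(\frac{i}{k}\right)^m = \frac{\max\{mi-(m-1)k,\,0\}}{k}$.
\begin{enumerate}[label=\rm{(\alph*)}]
\item For $i \leq k-1$, the power $\left(\frac{i}{k}\right)^k$ has numerator at most $k(k-1)-(k-1)k=0$. Hence $x^k = 0 = x^{k+1}$ on $S_k \setminus \{1\}$, and trivially at $1$, so $\mathbf{S}_k \in \mathcal{H}_k$.
\item At the coatom, $\left(\frac{k-1}{k}\right)^k = 0 \neq \frac{k-1}{k}$, using $k \geq 2$ so that $\frac{k-1}{k} \neq 0$.
\end{enumerate}
Therefore $\mathcal{H}_k \nvDash x^k \approx x$.

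No step is a real obstacle. The only care needed is the power computation in $\mathbf{S}_k$, where one must check that $k$ factors kill every non-top element. Alternatively, the two-element chain with an idempotent bottom would fail to witness nontriviality, since there $x^k = x$; this is why a Łukasiewicz chain $\mathbf{S}_k$ is the right witness.
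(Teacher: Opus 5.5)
Your proof is correct and matches the paper's argument. The conucleus property comes from Example~\ref{ex:conuclei}, which your item-by-item check simply spells out. Nontriviality is witnessed by $\mathbf{S}_k \in \mathcal{H}_k$, where $x^k$ sends every element other than $1$ to $0$; the paper phrases this as $a^k = \square a$ for all $a \in S_k$.
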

\begin{proof}
	As a consequence of Example~\ref{ex:conuclei}, $x^k$ is a conucleus on $\mathcal{H}_k$. Moreover, $\mathbf{S}_k \in \mathcal{H}_k$ and $a^k = \square a$ for all $a \in S_k$. Hence, $x^k$ is nontrivial.
\end{proof}

Before we continue with another example of a variety with nontrivial conuclei, we list some important properties of $\mathbf{S}_n$.
\begin{lemma}\label{lemma:Sn_conuclei_properties}
	If $n \in \mathbb{N}$, the following hold:
	\begin{enumerate}[label=\rm{(\arabic*)}]
		\item A function $\delta \colon S_n \to S_n$ can be represented with a term in the language $\lbrace \cdot,\imp,\top \rbrace$ if and only if $\delta(a) \in \langle a \rangle^{\mathbf{S}_n}$ for all $a \in S_n$, where $\langle a \rangle^{\mathbf{S}_n}$ is the subalgebra generated by $a$.
		\item If $n$ is a prime number, then a function $\delta \colon S_n \to S_n$ is representable with a term in the language $\lbrace \cdot,\imp,\top \rbrace$ if and only if $\delta(0) \in \lbrace 0,1 \rbrace$ and $\delta(1) = 1$.
		\item For each $B \subseteq S_n$, $B$ is the image of a conucleus on $\mathbf{S}_n$ if and only if
		\begin{enumerate}[label=-]
			\item $B$ is closed under multiplication and
			\item $B = \lbrace 0 \rbrace$ or $1 \in B$.
		\end{enumerate}
	\end{enumerate}
\end{lemma}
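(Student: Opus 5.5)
For item (1), the forward direction is immediate. If $\delta = t^{\mathbf{S}_n}$ for a term $t(x)$ in $\lbrace \cdot,\imp,\top \rbrace$, then $t^{\mathbf{S}_n}(a)$ lies in the subalgebra generated by $a$.

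For the converse I will use McNaughton functions via Theorem~\ref{thm:mcnaughton}. Since $\mathbf{S}_n$ is a subalgebra of $\mathbf{S}$, it suffices to build a McNaughton function $f$ with $f(1) = 1$ and $f(\frac{i}{n}) = \delta(\frac{i}{n})$ for every $i$. The constraint comes from the denominators. If $a = \frac{i}{n}$ and $d = n/\gcd(i,n)$, then $\langle a \rangle^{\mathbf{S}_n} = S_d$; this holds because $\langle a \rangle^{\mathbf{S}}$ is the subalgebra generated by $\frac{i}{n}$, and that is $S_d$ by the usual coatom/gcd argument. So the hypothesis says $\delta(a) = \frac{k}{d}$ with $d$ the denominator of $a$ in lowest terms, and $\delta(1) = 1$ since $\langle 1 \rangle = \lbrace 1 \rbrace$.

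Now take $m$ such that $S_n \subseteq \mathsf{Far}_m$ (Proposition~\ref{prop:farey_prop1}). Define $f = \bigoplus_{a \in \mathsf{Far}_m} k_a.h_{m,a}$, where $k_a = d\,\delta(a)$ for $a \in S_n$ and $k_a = 0$ otherwise. Each $k_a$ is an integer precisely because of the denominator condition.

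The main point to check is that this $f$ takes the prescribed values at the Farey points and is a McNaughton function. The value at each Farey point $a$ is $\min(1, k_a \cdot \frac{1}{d})$, since the hats of distinct points of $\mathsf{Far}_m$ vanish at each other's vertices. The function is McNaughton since it is a term function of $\mathbf{S}^+$; this is essentially the content of Theorem~\ref{thm:mcnaughton_constructive} read in reverse. Since $f(1) = \delta(1) = 1$, the second half of Theorem~\ref{thm:mcnaughton} yields a $\lbrace\cdot,\imp,\top\rbrace$-term $s$ with $s^{\mathbf{S}} = f$, and therefore $s^{\mathbf{S}_n} = \delta$.

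Item (2) follows from (1). When $n$ is prime, every $a \notin \lbrace 0,1 \rbrace$ generates all of $S_n$. Also $\langle 0 \rangle = \lbrace 0,1 \rbrace$ and $\langle 1 \rangle = \lbrace 1 \rbrace$, so the condition in (1) reduces to exactly the stated one.

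For item (3), suppose first that $B$ is the image of a conucleus $\delta$. By Lemma~\ref{lemma:conuclei_properties}(5), $B$ is closed under products. Let $e = \delta(1)$. Lemma~\ref{lemma:conuclei_properties}(7) gives $e^2 = e$, and in $\mathbf{S}_n$ the only idempotents are $0$ and $1$. If $e = 0$, then $B = \lbrace 0 \rbrace$ by monotonicity; otherwise $1 \in B$.

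Conversely, let $B$ satisfy the two conditions, and define $\delta(a) = \max\lbrace b \in B : b \leq a \rbrace$. This requires $0 \in B$, which is where I expect the only subtlety. If $B = \lbrace 0 \rbrace$ the claim is trivial. Otherwise $1 \in B$ and $B$ is nonempty; since $B$ is closed under multiplication and every element of $S_n$ other than $1$ is nilpotent, either $0 \in B$ or $B = \lbrace 1 \rbrace$. In the case $B = \lbrace 1 \rbrace$ the map is not total, so I will check whether the intended statement excludes it or treats it separately; possibly that case needs $0 \in B$ added, and I would verify this against the intended statement.

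Once $\delta$ is defined, it is an interior operator with image $B$. We have $\delta(a)\cdot\delta(1) = \delta(a)$ because $\delta(1) = 1$. Finally, $\delta(a)\delta(b) \in B$ and $\delta(a)\delta(b) \leq ab$, so $\delta(a)\delta(b) \leq \delta(ab)$.
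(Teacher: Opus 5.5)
Your argument for item (3) is the paper's argument. Your suspicion about $B=\lbrace 1\rbrace$ is correct, and you should state it as a conclusion rather than leave it open. The set $\lbrace 1\rbrace$ is closed under multiplication and contains $1$, yet it is not the image of any conucleus on $\mathbf{S}_n$. Every interior operator satisfies $\delta(0)\leq 0$, so $0\in\delta(S_n)$ always (Lemma~\ref{lemma:conuclei_properties}(4)). Hence the converse of (3) is false as literally stated. The paper's proof has exactly the gap you found: it defines $\delta(a)$ as $\max\lbrace b\in B: b\leq a\rbrace$ without noticing that this set is empty at $a=0$ when $B=\lbrace 1\rbrace$. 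By your nilpotency remark this is the only exception, so the correct condition is that $B$ is closed under multiplication and either $B=\lbrace 0\rbrace$ or $\lbrace 0,1\rbrace\subseteq B$. This matches the paper's list of conuclear images of $\mathbf{S}_5$, all of which contain $0$. With that amendment your proof of (3) is complete.

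For items (1) and (2) you take a genuinely different route. The paper does not prove them at all; it cites them as known facts about Wajsberg hoops. You instead prove (1) from the paper's own preliminaries: you interpolate $\delta$ by a truncated sum of Schauder hats over a Farey partition containing $S_n$, pass to a $\lbrace\cdot,\imp,\top\rbrace$-term via Theorem~\ref{thm:mcnaughton}, and then derive (2) from (1). The argument is sound for three reasons. First, for coprime $0<p<d$, $\langle p/d\rangle^{\mathbf{S}}$ is a subalgebra of $\mathbf{S}_d$ containing $p/d\neq 1$, so it equals some $S_k$ with $k\mid d$, and $p/d\in S_k$ forces $k=d$. Second, the hats vanish at the other points of $\mathsf{Far}_m$. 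Third, $k_a\leq d$ means no truncation occurs at the Farey points. Your route buys self-containment and an explicit, effective term, in the same spirit as the construction in Theorem~\ref{thm:multiplicative_separation_conucleus}; the citation is merely shorter.

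Two small fixes are needed. That $f$ is a McNaughton function does not follow from Theorem~\ref{thm:mcnaughton_constructive} ``read in reverse'', since that theorem only decomposes a given McNaughton function. It follows because each $h_{m,a}$ is McNaughton by definition, hence a term function of $\mathbf{S}^+$ by Theorem~\ref{thm:mcnaughton}; so $f$ is a term function of $\mathbf{S}^+$ and therefore McNaughton. Also, your formula $\langle a\rangle^{\mathbf{S}_n}=S_d$ fails at $a=1$, where it gives $S_1$ instead of $\lbrace 1\rbrace$. This is harmless only because you treat $a=1$ separately.
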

\begin{proof}
	Items (1) and (2) are well-known facts about Wajsberg hoops (see, for example, \cite[Theorem 3.3, Corollary 3.5]{AGLIANO2002}).
	
	Let us prove item (3). Let $B$ be a subset of $S_n$ and assume that there exists some conucleus $\delta\colon S_n \to S_n$ such that $\delta(S_n) = B$. By Proposition~\ref{prop:conuclei_characterization}, $B$ is closed under multiplication and $\delta(1) \cdot a = a$ for all $a \in B$. Since $\delta(1)$ is idempotent, necessarily $\delta(1) \in \lbrace 0,1 \rbrace$. If $\delta(1) = 0$, then $\delta(a) = 0$ for all $a \in S_n$ and $B = \lbrace 0 \rbrace$. If $\delta(1) \neq 0$, then $\delta(1) = 1 \in B$. This concludes the proof of the first implication. Now assume that $B$ is closed under multiplication and $B = \lbrace 0 \rbrace$ or $1 \in B$. In this case, we define $\delta(a) \coloneq \max \lbrace b \in B: b \leq a \rbrace$ for all $a \in A$. Clearly $\delta$ is an interior operator, $\delta(S_n) = B$ and $\delta(a) \cdot \delta(1) = \delta(a)$ for all $a \in S_n$. Moreover, since $B$ is closed under multiplication, $\delta(a) \cdot \delta(b) \leq \delta(a \cdot b)$ for all $a,b \in S_n$. Hence, $\delta$ is a conucleus.  
\end{proof}

Now consider the totally ordered Wajsberg hoop $\mathbf{S}_5$. Taking into consideration the previous lemma, a simple computation yields the following images of conuclei:
\begin{equation*}
	\textstyle \lbrace 0 \rbrace, \lbrace 0,1 \rbrace, \lbrace 0,\frac{1}{5}, 1 \rbrace, \lbrace 0,\frac{2}{5}, 1 \rbrace, \lbrace 0,\frac{1}{5},\frac{2}{5}, 1 \rbrace, \lbrace 0,\frac{1}{5},\frac{3}{5}, 1 \rbrace, \lbrace 0,\frac{1}{5},\frac{2}{5},\frac{3}{5}, 1 \rbrace, S_5.
\end{equation*}		
Clearly, the conucleus corresponding to the set $\lbrace 0 \rbrace$ can not be represented with a term in the language $\lbrace \cdot,\imp,\top \rbrace$. If we define $s(x) \coloneq x \imp x^2$, the conuclei corresponding to each image different from $\lbrace 0 \rbrace$ are, in order of appearance, the interpretation of the terms
\begin{equation*}
	\begin{aligned}
		\delta_1(x) & = x^5, \\
		\delta_2(x) & = (s(x) \imp x^3) \cdot (s(x)^3 \imp x^3), \\
		\delta_3(x) & = (x \imp s(x)^2) \cdot ((x \imp x^3) \imp x^5), \\
		\delta_4(x) & = (((s(x) \imp x^3) \imp s(x)^2) \imp ((x \imp x^3) \cdot (x^2 \imp x^3))) \imp x^5, \\
		\delta_5(x) & = s(x) \imp x^4, \\
		\delta_6(x) & = x \cdot (x^3 \imp x^4) \text{ and } \\
		\delta_7(x) & = x.
	\end{aligned}
\end{equation*}
Therefore, a term $t(x)$ is a conucleus on $\mathsf{V}(\mathbf{S}_5)$ if and only if $\mathsf{V}(\mathbf{S}_5) \vDash t(x) \approx \delta_i(x)$ for some $i \in \lbrace 1,\dots,7 \rbrace$. Furthermore, a term $t(x)$ is a conucleus on $\mathsf{V}(\mathbf{S}_5^+)$ if and only if $\mathsf{V}(\mathbf{S}_5^+) \vDash t(x) \approx \bot$ or $\mathsf{V}(\mathbf{S}_5^+) \vDash t(x) \approx \delta_i(x)$ for some $i \in \lbrace 1,\dots,7 \rbrace$.

We have provided several examples of terms that define nontrivial conuclei on the varieties $\mathsf{V}(\mathbf{S}_5)$ and $\mathsf{V}(\mathbf{S}_5^+)$. These examples show that calculating all conuclei on some variety of Wajsberg hoops or MV-algebras, even if it is generated by a finite algebra, can be a nontrivial task. This is a consequence of the characterization of conuclear images given by item (3) in Lemma~\ref{lemma:Sn_conuclei_properties}, which essentially tells us that the problem of calculating conuclei on some $\mathbf{S}_n$ is equivalent to the problem of finding all subsets closed under multiplication. Taking this into consideration, we are going to turn our focus to terms that define multiplicative conuclei on varieties of Wajsberg hoops. In this case, an explicit description of these terms can be provided. This will be the main topic of the following section.

\section{Multiplicative conuclei on varieties of Wajsberg hoops}\label{sec:multiplicative_Wajsberg_conuclei}

Recall that $\delta$ is a multiplicative conucleus on a (bounded) hoop $\mathbf{A}$ if and only if it is an interior operator that satisfies $\delta(a) \cdot \delta(b) = \delta(a \cdot b)$ for all $a,b \in A$. The definition of a term that defines a multiplicative conucleus on a variety follows naturally: a term $t(x)$ in the language of (bounded) hoops is a \emph{multiplicative conucleus} on a variety of (bounded) hoops $\mathcal{V}$ if $t^\mathbf{A}$ is a multiplicative conucleus on $\mathbf{A}$ for all $\mathbf{A} \in \mathcal{V}$. As is the case with conuclei on varieties, observe that being a multiplicative conucleus on a variety can also be expressed with identities.

The main result of this section is a characterization of all terms that define multiplicative conuclei on proper subvarieties of $\mathcal{WH}$ (Theorem~\ref{thm:proper_WH_multiplicative_conuclei}). As a consequence of this result, we can prove that there exists an effective procedure to calculate all multiplicative conuclei on a given proper subvariety of $\mathcal{WH}$. This also shows that the number of terms that define a multiplicative conucleus on a variety of Wajsberg hoops is, up to equivalence, finite. We end this section with an example.

The first positive result of this section is a description of all multiplicative conuclei on a finite totally ordered Wajsberg hoop.

\begin{proposition}\label{prop:Sn_multiplicative_conuclei}
	Let $n \in \mathbb{N}$ and $\delta\colon S_n \to S_n$. The following are equivalent:
	\begin{enumerate}[label=\rm{(\roman*)}]
		\item $\delta$ is a multiplicative conucleus on $\mathbf{S}_n$; 
		\item $\delta \in \lbrace \bot^{\mathbf{S}_n^+},(x^n)^{\mathbf{S}_n},x^{\mathbf{S}_n} \rbrace$.
	\end{enumerate}
\end{proposition}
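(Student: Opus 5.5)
We first check (ii) $\Rightarrow$ (i). The constant map $\bot$ is a multiplicative conucleus, since $0\cdot 0 = 0$ and $0 \leq a$. The identity is trivially a multiplicative conucleus. For $x^n$, note that $\mathbf{S}_n$ is $n$-potent, because $a^n = 0$ for $a<1$ and $1^n = 1$. So $(x^n)^{\mathbf{S}_n}$ is the operator $\square$ of Example~\ref{ex:conuclei}, which is a multiplicative conucleus: $\square(a\cdot b) = 1$ iff $a\cdot b = 1$ iff $a=b=1$, and this holds iff $\square a \cdot \square b = 1$.

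For (i) $\Rightarrow$ (ii), let $\delta$ be a multiplicative conucleus and set $B \coloneq \delta(S_n)$. By Lemma~\ref{lemma:Sn_conuclei_properties}(3), either $B=\{0\}$ or $1\in B$. If $B=\{0\}$, then $\delta$ is constantly $0$, i.e. $\delta = \bot^{\mathbf{S}_n^+}$. So assume $1 \in B$, hence $\delta(1)=1$. Let $c \coloneq \frac{n-1}{n}$ be the coatom. Every $a \in S_n$ equals $c^k$ for some $k$, namely $a = \frac{n-k}{n} = c^k$ with $k\in\{0,\dots,n\}$. Multiplicativity then gives $\delta(c^k) = \delta(c)^k$, so $\delta$ is completely determined by $\delta(c)$. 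Now $\delta(c) = c^m$ for some $m \ge 1$, since $\delta(c)\le c$, and idempotency of $\delta$ forces $\delta(c^m) = c^m$. Thus $c^{m^2} = \delta(c)^m = \delta(c^m) = c^m$.

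This is the key step. If $m = 1$, then $\delta(c)=c$ and $\delta$ is the identity. If $m\ge 2$, then $c^m = c^{m^2}$ with $m^2 > m$ in the chain $\mathbf{S}_n$, where $c^k = \max\{0,\frac{n-k}{n}\}$ strictly decreases until it reaches $0$. This forces $c^m = 0$, so $\delta(c) = 0$ and $\delta(c^k)=0$ for all $k\ge1$. In other words, $\delta(a) = 0$ for $a<1$ and $\delta(1)=1$, which is exactly $(x^n)^{\mathbf{S}_n}$. (When $n=1$ the cases coincide harmlessly.)

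The only delicate point is the idempotency argument, namely that $c^m=c^{m^2}$ with $m\geq 2$ forces $c^m=0$. This uses the strict decrease of the powers of $c$ until they hit $0$, which is immediate from the formula for $\cdot$ in $\mathbf{S}$. The remaining points are routine: monotonicity is automatic once $\delta$ is one of the three maps, and $\delta(1)\cdot\delta(a)=\delta(a)$ holds in each case.
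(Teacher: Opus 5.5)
Your proof is correct and follows essentially the same route as the paper: you split on whether $\delta(1)$ is $0$ or $1$, write $\delta(c)=c^m$ for the coatom $c$, and show that $m\geq 2$ forces $c^m=0$. The only difference is minor. The paper obtains $c^m=(c^m)^2$ from the sandwich $c^m=\delta(c^m)\leq\delta(c^{m-1})\leq\delta(c)$ together with multiplicativity applied to $c\cdot c^{m-1}$, whereas you obtain $c^m=c^{m^2}$ directly from $\delta(c^m)=\delta(c)^m$ and idempotence, and you also supply the verification of (ii) $\Rightarrow$ (i), which the paper omits.
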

\begin{proof}
	We restrict ourselves to the implication (i) $\Rightarrow$ (ii). Assume that $\delta$ is a multiplicative conucleus on $\mathbf{S}_n$. Taking into consideration item (7) in Lemma~\ref{lemma:conuclei_properties}, we know that $\delta(1) \in \lbrace 0,1 \rbrace$. If $\delta(1) = 0$, then clearly $\delta = \bot^{\mathbf{S}_n^+}$. Starting now assume that $\delta(1) = 1$. If $c$ is the coatom of $\mathbf{S}_n$, then $\delta(c) = c^m$ for some $m \in \lbrace 1,\dots,n \rbrace$ (recall that $\mathbf{S}_n = \langle c \rangle^{\mathbf{S}_n} = \lbrace c^k: k \in \lbrace 0,\dots,n \rbrace \rbrace$, where $\langle c \rangle^{\mathbf{S}_n}$ is the subalgebra generated by $c$). If $m = 1$, then $\delta(c) = c$ and $\delta(c^k) = c^k$ for all $k \in \lbrace 0,\dots,n \rbrace$. In this case $\delta = x^{\mathbf{S}_n}$. On the other hand, if $m > 1$, then $\delta(c^{m-1}) = c^m$ (because $c^m = \delta(c^m) \leq \delta(c^{m-1}) \leq \delta(c) = c^m$) and
	\begin{equation*}
		c^m = \delta(c^m) = \delta(c \cdot c^{m-1}) = \delta(c) \cdot \delta(c^{m-1}) = (c^m)^2.
	\end{equation*}	
	Since $c^m < 1$, necessarily $\delta(c) = c^m = 0$. Since $\delta$ is monotone, $\delta = (x^n)^{\mathbf{S}_n}$.  
\end{proof}

Observe that, in the previous proposition, the multiplicative conucleus $x^n$ is none other than the $\square$ operator on $\mathbf{S}_n$ (recall Example~\ref{ex:conuclei}). Before we continue with the next result, we consider some properties of $\mathbf{S}_n^\omega$.
\begin{lemma}\label{lemma:Smomega_properties}
	If $n \in \mathbb{N}$, then the following hold:
	\begin{enumerate}[label=\rm{(\arabic*)}]
		\item If $m \in \lbrace 0,\dots,n \rbrace$ and $k \in \mathbb{Z}$ are such that $(m,k)$, $(m,k-1) \in S_n^\omega$, then $(m,k) \cdot (n,-1) = (m,k-1)$.
		\item If $\delta$ is a multiplicative conucleus on $\mathbf{S}_n^\omega$, then $\delta(a) \in \lbrace (0,0), a \rbrace$ for all $a \in S_n^\omega$. Particularly, $\delta(S_n^\omega) \backslash \lbrace (0,0) \rbrace$ is an up-set, that is, if $\delta(a) = a \neq (0,0)$ and $a \leq b$, then $\delta(b) = b \neq (0,0)$, for all $a,b \in S_n^\omega$.
	\end{enumerate}
\end{lemma}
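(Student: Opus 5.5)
For item (1) I will compute directly from the definitions in Example~\ref{ex:hoops}. We have $\neg(m,k) = (n-m,-k)$ and $\neg(n,-1) = (0,1)$, so $\neg(m,k) + \neg(n,-1) = (n-m,1-k)$. Since $(m,k-1) \in S_n^\omega$, this element equals $\neg(m,k-1)$, which lies in $S_n^\omega$ and hence is $\leq (n,0)$. So the truncation in $\oplus$ is inactive, and $(m,k)\cdot(n,-1) = \neg(n-m,1-k) = (m,k-1)$.

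For item (2), let $\bot = (0,0)$, $\top = (n,0)$ and $c = (n,-1)$. The element $c$ is the coatom of $\mathbf{S}_n^\omega$, since $(n,-1)$ is the lexicographic predecessor of $(n,0)$. I will first identify the idempotents of $\mathbf{S}_n^\omega$. For $x=(m,k)$, $x^2$ is the truncation at $\bot$ of $(2m-n,2k)$, and this equals $x$ only when $x\in\{\bot,\top\}$. By item (7) of Lemma~\ref{lemma:conuclei_properties}, $\delta(\top)\in\{\bot,\top\}$. If $\delta(\top)=\bot$, monotonicity gives $\delta\equiv\bot$ and we are done, so assume $\delta(\top)=\top$.

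Next I pin down $\delta(c)$. Suppose $d=\delta(c)$ satisfies $\bot<d<c$. Then $d\leq c^2$, because $c^2=(n,-2)$ is the predecessor of $c$. Sandwiching gives $d=\delta(d)\leq\delta(c^2)\leq\delta(c)=d$. Multiplicativity then yields $d=\delta(c^2)=d^2$, so $d$ is idempotent and hence $d=\bot$, a contradiction. So $\delta(c)\in\{\bot,c\}$. If $\delta(c)=\bot$, then every $a<\top$ satisfies $a\leq c$, so $\delta(a)=\bot$, and the claim holds.

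The main case is $\delta(c)=c$. Take $a=(m,k)\neq\bot$ and put $b=\delta(a)\leq a$, and suppose $\bot<b<a$; I will derive a contradiction. Since $a>\bot$, its predecessor $(m,k-1)$ lies in $S_n^\omega$, and item (1) shows it equals $a\cdot c$. Because $b<a$, we get $b\leq a\cdot c$, and sandwiching gives $\delta(a\cdot c)=b$. On the other hand, $\delta(a\cdot c)=\delta(a)\cdot\delta(c)=b\cdot c$. But $b\neq\bot$, so by item (1) $b\cdot c$ is the strict predecessor of $b$, contradicting $b\cdot c=b$. Hence $\delta(a)\in\{\bot,a\}$ for all $a$.

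The up-set property then follows from monotonicity. If $\delta(a)=a\neq\bot$ and $a\leq b$, then $\delta(b)\geq a>\bot$, and the dichotomy forces $\delta(b)=b$.

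The only delicate points are the two uses of the discrete lexicographic order: the predecessor of $c$ is $c^2$, and $b<a$ implies $b\leq a\cdot c$. Both are covered by item (1).
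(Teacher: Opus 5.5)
Your proof is correct, and its key step uses the same mechanism as the paper. Assume $\bot<\delta(a)<a$ and pick an element $x$ with $\delta(a)\le x\cdot c\le x\le a$. Sandwiching then gives $\delta(x)=\delta(x\cdot c)=\delta(a)$. Multiplicativity forces $\delta(x\cdot c)=\delta(a)\cdot\delta(c)\le\delta(a)\cdot c$, and item (1) says this is strictly below $\delta(a)$.

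The paper takes $x=\delta(a)^+$, the successor of $\delta(a)$, so that $x\cdot c=\delta(a)$. You take $x=a$, so that $x\cdot c$ is the predecessor of $a$. These are mirror-image choices and work equally well.

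The real difference is your preliminary case analysis: classifying the idempotents, splitting on $\delta(\top)$, and proving $\delta(c)\in\{\bot,c\}$. None of it is needed. In your main case you use $\delta(c)=c$ only to write $\delta(a)\cdot\delta(c)=b\cdot c$. However, $b\cdot\delta(c)\le b\cdot c<b$ follows from $\delta(c)\le c$ alone, and it already contradicts $\delta(a\cdot c)=b$. This is exactly how the paper avoids determining $\delta(c)$ and gets a single uniform argument for every multiplicative conucleus.

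Your extra steps are not wrong. The computation $x^2=(2m-n,2k)\vee(0,0)$ and the argument that an intermediate $\delta(c)$ would have to be idempotent are both sound. Your explicit verification of item (1), which the paper omits as well known, is also correct.
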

\begin{proof}
	The first item is a well-known property of $\mathbf{S}_n^\omega$ and its proof will be omitted. To prove the second fact, let $\delta$ be a multiplicative conucleus on $\mathbf{S}_n^\omega$. Let $a \in S_n^\omega$ and assume that $\delta(a) < a$. If $(0,0) < \delta(a)$, let $\delta(a)^-$ and $\delta(a)^+$ be the predecessor and successor of $\delta(a)$, respectively. Observe that $\delta(a)^+ \in [\delta(a),a]$. Therefore $\delta(\delta(a)^+) = \delta(a)$. Furthermore, as a consequence of item (1), $\delta(\delta(a)^+ \cdot (n,-1)) = \delta(\delta(a)) = \delta(a)$ and 
	\begin{equation*}
		\delta(\delta(a)^+) \cdot \delta(n,-1) = \delta(a) \cdot \delta(n,-1) \leq \delta(a) \cdot (n,-1) = \delta(a)^-.
	\end{equation*}
	Hence, $\delta(\delta(a)^+ \cdot (n,-1)) \neq \delta(\delta(a)^+) \cdot \delta(n,-1)$ and we reached a contradiction, that was achieved by assuming that $(0,0) < \delta(a)$.  
\end{proof}

For the next result recall that, if $F$ is a filter on a hoop with least element $\bot$, then $\delta_F$ is a multiplicative conucleus, where
\begin{equation*}
	\delta_F(x) = 
	\begin{cases}
		\bot & \text{if $x \notin F$}; \\
		a & \text{if $x \in F$}.
	\end{cases}
\end{equation*}
Observe that, for each $n \in \mathbb{N}$, the set $F = \lbrace (n,j): j \leq 0 \rbrace$ is a filter on $\mathbf{S}_n^\omega$ and $\delta_F$ is a multiplicative conucleus on $\mathbf{S}_n^\omega$. As will be shown next, $\delta_F$ can be represented with a term.

\begin{lemma}\label{lemma:Snomega_nontrivial_filter_conucleus}
	Let $n \in \mathbb{N}$ and $F \coloneq \lbrace (n,j): j \leq 0 \rbrace$. Then,
	\begin{equation*}
		\delta_F = (x \cdot ((x^{n+1} \imp x^{2(n+1)}) \imp x^{n+1}))^{\mathbf{S}_n^\omega}.
	\end{equation*}
\end{lemma}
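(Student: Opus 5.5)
The plan is to verify the identity pointwise on $S_n^\omega$ by direct computation in the lexicographic model, splitting into the two cases $a \in F$ and $a \notin F$. I will write $t(x) \coloneq x \cdot ((x^{n+1} \imp x^{2(n+1)}) \imp x^{n+1})$. In $\mathbf{S}_n^\omega$, for $a,b \in S_n^\omega$, the definitions in Example~\ref{ex:hoops} unwind to
\begin{equation*}
a \cdot b = \max\lbrace a + b - (n,0), (0,0) \rbrace, \qquad a \imp b = \min\lbrace (n,0) - a + b, (n,0) \rbrace,
\end{equation*}
with $\max$, $\min$ and all arithmetic taken in $\mathbb{Z}^2$ under the lexicographic order. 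I will record these two formulas first, since everything else reduces to them.

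\emph{Case $a = (n,k) \in F$, i.e.\ $k \leq 0$.} Here $a + b - (n,0)$ never falls below $(0,0)$ when $b \in F$, because the first coordinate stays equal to $n$. So by induction $a^j = (n,jk)$ for all $j \geq 0$. In particular $a^{n+1} = (n,(n+1)k)$ and $a^{2(n+1)} = (n,2(n+1)k)$. Then
\begin{equation*}
a^{n+1} \imp a^{2(n+1)} = \min\lbrace (n,(n+1)k), (n,0) \rbrace = (n,(n+1)k) = a^{n+1}.
\end{equation*}
Hence $(a^{n+1} \imp a^{2(n+1)}) \imp a^{n+1} = a^{n+1} \imp a^{n+1} = (n,0)$, and $t(a) = a \cdot (n,0) = a = \delta_F(a)$.

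\emph{Case $a = (m,k) \notin F$, i.e.\ $m < n$.} I claim $a^{n+1} = (0,0)$. As long as no truncation occurs, $a^j = (jm - (j-1)n, jk) = (n - j(n-m), jk)$. Since $n - m \geq 1$, the first coordinate is at most $n - j$. At $j = n+1$ it is negative, so the untruncated value lies lexicographically below $(0,0)$. Because powers are decreasing and the truncation is at $(0,0)$, we get $a^{n+1} = (0,0)$, and therefore $a^{2(n+1)} = (0,0)$ as well.

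This step is the only delicate point. One must use $n+1$ rather than $n$, since $a^n$ may equal $(0, nk)$ with $k > 0$, which is nonzero. One must also check that once the product reaches $(0,0)$ it stays there; this holds because $(0,0)$ is the least element and is absorbing.

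With $a^{n+1} = a^{2(n+1)} = (0,0)$ the computation finishes:
\begin{equation*}
(0,0) \imp (0,0) = (n,0), \qquad (n,0) \imp (0,0) = (0,0), \qquad a \cdot (0,0) = (0,0) = \delta_F(a).
\end{equation*}
Combining the two cases gives $t^{\mathbf{S}_n^\omega} = \delta_F$.
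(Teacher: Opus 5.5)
Your proof is correct and takes the same route as the paper. The paper's proof simply cites the two facts you verify: $a^{n+1} \imp a^{2(n+1)} = a^{n+1}$ for $a \in F$, and $a^{n+1} = (0,0)$ for $a \notin F$; your explicit lexicographic formulas for $\cdot$ and $\imp$, and your remark on why the exponent must be $n+1$ rather than $n$, fill in details the paper leaves as immediate.
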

\begin{proof}
	The proof is an immediate consequence of the fact that $a^{n+1} = (0,0)$ for all $a \in S_n^\omega \backslash F$ and $a^{n+1} \imp a^{2(n+1)} = a^{n+1}$ for all $a \in F$.  
\end{proof}

Thanks to the previous results, we can describe all multiplicative conuclei on $\mathbf{S}_n^\omega$ for all $n \in \mathbb{N}$. 

\begin{proposition}\label{prop:Snomega_multiplicative_conuclei}
	Let $n \in \mathbb{N}$ and $\delta\colon S_n^\omega \to S_n^\omega$. The following are equivalent:
	\begin{enumerate}[label=\rm{(\roman*)}]
		\item $\delta$ is a multiplicative conucleus on $\mathbf{S}_n^\omega$; 
		\item $\delta(x) \in \lbrace \bot^{(\mathbf{S}_n^\omega)^+}, \square, (x \cdot ((x^{n+1} \imp x^{2(n+1)}) \imp x^{n+1}))^{\mathbf{S}_n^\omega}, x^{\mathbf{S}_n^\omega} \rbrace$.
	\end{enumerate}
\end{proposition}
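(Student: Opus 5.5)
The implication (ii) $\Rightarrow$ (i) is a direct check. The constant $\bot$ is trivially an interior operator and multiplicative, since $\bot\cdot\bot=\bot$. The operator $\square$ equals $\delta_F$ for the filter $F=\lbrace \top \rbrace$. The third operator is $\delta_F$ for $F=\lbrace (n,j): j\leq 0\rbrace$ by Lemma~\ref{lemma:Snomega_nontrivial_filter_conucleus}. The identity is obviously multiplicative. For the filter operators we use the remark that every $\delta_F$ is a multiplicative conucleus. This rests on the fact that in a chain $a\cdot b\in F$ forces $a,b\in F$, while if $a\notin F$ then $a\cdot b\notin F$ because $F$ is an up-set. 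So the real work is (i) $\Rightarrow$ (ii).

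Assume $\delta$ is a multiplicative conucleus on $\mathbf{S}_n^\omega$. By Lemma~\ref{lemma:Smomega_properties}(2), $\delta(a)\in\lbrace (0,0),a\rbrace$ for every $a$. Moreover, $U\coloneq\delta(S_n^\omega)\backslash\lbrace(0,0)\rbrace$ is an up-set, and $\delta$ is $a\mapsto a$ on $U$ and $a\mapsto(0,0)$ off $U$. Multiplicativity forces $U$ to be closed under products that are nonzero. More precisely, if $a,b\in U$ then $\delta(ab)=ab$, so $ab\in U$ or $ab=(0,0)$. Conversely, if $ab\in U$ then $a,b\in U$ because $U$ is an up-set. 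Thus $U\cup\lbrace(0,0)\rbrace$ behaves like $\lbrace\bot\rbrace\cup F$ for a "filter up to zero". The plan is to classify such up-sets $U$ in the chain $S_n^\omega$, which is ordered lexicographically: $(0,0)<(0,1)<\dots<(1,j)<\dots<(n,-1)<(n,0)=\top$.

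We split into cases according to $U$.
\begin{enumerate}[label=\rm{(\alph*)}]
\item $U=\emptyset$. Then $\delta=\bot$.
\item $U=\lbrace\top\rbrace$. Then $\delta=\square$.
\item $U$ contains some $(n,j)$ with $j<0$, but no element $(m,k)$ with $m<n$. Then $U=\lbrace(n,j): j\geq j_0\rbrace$ for some $j_0<0$, or $U$ is all of $F$.
\item $U$ contains some $(m,k)$ with $m<n$. Then $U\supseteq F$, since $U$ is an up-set and $F$ lies above every such element.
\end{enumerate}

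In case (c), take $a=(n,j_0)\in U$. Then $a^2=(n,2j_0)$ is nonzero and lies outside $U$, which contradicts $\delta(a^2)=\delta(a)^2=a^2$. Hence $U=F$, and $\delta$ is the third operator.

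In case (d), let $a=(m,k)\in U$ with $m<n$. Using Lemma~\ref{lemma:Smomega_properties}(1), multiplying by $(n,-1)\in U$ repeatedly gives $(m,k-r)$ for all $r$ with $(m,k-r)\in S_n^\omega$. These products are nonzero as long as they are not $(0,0)$. So the whole block $\lbrace(m,j)\rbrace$ lies in $U$, and in particular its least element lies in $U$: this is $(m,\cdot)$ with $m\geq1$ going down to arbitrarily negative second coordinate, or $(0,1)$ when $m=0$. When $m\geq1$, a suitable power of an element $(m,j)$ in $U$ lands in the block $m-1$ or becomes $(0,\cdot)$. One should check that $(m,j)\cdot(n-m+ (m-1),\ast)$-type products descend. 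More concretely, $(m,j)\cdot(n-1,j')$ with $(n-1,j')\in U$ (as an up-set) gives $(m-1,j+j')$, which is nonzero when $m\geq 2$ or $j+j'>0$. Iterating, $U$ contains $(0,1)$. Hence $U=S_n^\omega\backslash\lbrace(0,0)\rbrace$ and $\delta$ is the identity.

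The main obstacle is the bookkeeping in case (d): producing, from one element below the top block, nonzero products that descend all the way to $(0,1)$. The approach would be to choose the second coordinates large, which is allowed since within a block $m<n$ the second coordinate ranges over all of $\mathbb{Z}$. Explicitly, $(m,k)\cdot(n-1,j')=(m-1,k+j')$ with $j'$ large positive keeps the product strictly above $(0,0)$. Such $(n-1,j')$ belongs to $U$ because it lies above $(m,k)$ when $m\leq n-1$, with the case $m=n-1$ handled by choosing $j'\geq k$.
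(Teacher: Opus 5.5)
Your proof is correct, but it organizes the case analysis differently from the paper. Both start from Lemma~\ref{lemma:Smomega_properties}(2), so that $\delta$ is determined by the up-set $U$ of its nonzero fixed points.

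The paper splits according to whether $\min U$ exists:
\begin{enumerate}
\item If $b=\min U$ exists and $b<\top$, then $c=(n,-1)\in U$, and the computation $b^-=b\cdot c=\delta(b)\cdot\delta(c)=\delta(b^-)=(0,0)$ forces $b=(0,1)$.
\item If the minimum does not exist, the paper first pins $U\cup\{(0,0)\}$ down to the explicit family $\{(0,0)\}\cup\{(m,j):j\le 0\}\cup[(m,0),(n,0)]$. It then rules out every $m<n$ with a parity-based product: $(l,0)\cdot(n-l,1)=(0,1)$ when $m\le l=\lfloor n/2\rfloor$, and $(m,0)^2=(2m-n,0)$ when $l<m<n$.
\end{enumerate}

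You instead split according to whether $U$ leaves the top block $F$. Inside $F$ you square: $(n,j_0)^2=(n,2j_0)$. Outside $F$ you use a propagation argument that carries any fixed point below $F$ down to the atom $(0,1)$: you fill a block downward by multiplying by $(n,-1)$, then step down one block by multiplying by some $(n-1,j')\in U$ with $j'$ large.

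What each route buys:
\begin{enumerate}
\item Your route avoids classifying the intermediate candidates $\delta_m$ and the parity split. It shows directly why no up-set strictly between $F$ and $S_n^\omega\setminus\{(0,0)\}$ can occur.
\item The paper's predecessor trick is shorter when $U$ has a least element. It also disposes of your subcase $U=\{(n,j):j\ge j_0\}$ at no extra cost, since there $b^-=(n,j_0-1)\neq(0,0)$.
\end{enumerate}

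One small correction: your aside that a suitable power of $(m,j)$ lands in block $m-1$ is false in general. The square $(m,j)^2=(2m-n,2j)$ truncates to $(0,0)$ when $2m<n$. You do not need that remark, however, because the explicit product $(m,j)\cdot(n-1,j')=(m-1,j+j')$ does the work.
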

\begin{proof}
	We are going to restrict the proof to the implication (i) $\Rightarrow$ (ii). Assume that $\delta$ is a multiplicative conucleus on $\mathbf{S}_n^\omega$. If $\delta((n,0)) = (0,0)$, then $\delta = \bot^{(\mathbf{S}_n^\omega)^+}$. If $\delta((n,0)) = (n,0)$, we have to consider different cases. Before we continue, we define $c \coloneq (n,-1)$.
	
	We first consider the case where $b \coloneq \min \lbrace \delta(S_n^\omega) \backslash \lbrace (0,0) \rbrace \rbrace$ exists. If $b = (n,0)$, then $\delta(c) = (0,0)$ and $\delta = \square$. Assume that $b < (n,0)$. Taking into consideration Lemma~\ref{lemma:Smomega_properties}, we conclude that $\delta(c) = c$. Since $b > (0,0)$, the element $b$ has a predecessor which will be denoted by $b^-$. Observe that $\delta(b^-) = (0,0)$ and
	\begin{equation*}
		b^- = b \cdot c = \delta(b) \cdot \delta(c) = \delta(b \cdot c) = \delta(b^-) = (0,0).
	\end{equation*}	
	Hence $b^- = (0,0)$, that is, $b = (0,1)$ is the atom of the structure. In this case, since $\delta(S_n^\omega) \backslash \lbrace (0,0) \rbrace$ is an up-set, $\delta = x^{\mathbf{S}_n^\omega}$.
	
	We now consider the case where $\min \lbrace \delta(S_n^\omega) \backslash \lbrace (0,0) \rbrace \rbrace$ does not exist. If we define $M \coloneq \lbrace 0 < k \leq n: (k,0) \in \delta(S_n^\omega) \backslash \lbrace (0,0) \rbrace \rbrace$, then $M \neq \emptyset$ and $m \coloneq \min M > 0$. Moreover, the following holds:
	\begin{enumerate}[label=-]
		\item $\delta(m,j) = (m,j)$ for all $j \leq 0$: if there exists some $j < 0$ such that $\delta(m,j) = (0,0)$, then $l \coloneq \min \lbrace k \leq 0: \delta(m,k) = (m,k) \rbrace$ exists and we obtain that $(m,l) = \min \lbrace \delta(S_n^\omega) \backslash \lbrace (0,0) \rbrace \rbrace$ (this is a contradiction). 
		\item $\delta(m-1,j) = (0,0)$ for all $j \geq 0$: if there exists some $j > 0$ such that $\delta(m-1,j) = (m-1,j)$, then $l \coloneq \min\lbrace k > 0: \delta(m-1,k) = (m-1,k) \rbrace$ exists and $(m-1,l) = \min \lbrace \delta(S_n^\omega) \backslash \lbrace (0,0) \rbrace \rbrace$ (a contradiction, again).
	\end{enumerate}		
	In this case $\delta = \delta_m$, where
	\begin{equation*}
		\delta_m(a) \coloneq
		\begin{cases}
			(0,0) & \text{if $a \in [(0,0),(m-1,0)] \cup \lbrace (m-1,j): j \geq 0 \rbrace$}; \\
			a & \text{if $a \in \lbrace (m,j): j \leq 0 \rbrace \cup [(m,0),(n,0)]$};
		\end{cases}
	\end{equation*}
	for each $a \in S_n^\omega$. Observe that $\delta_m$ is an interior operator with 
	\begin{equation*}
		\delta_m(S_n^\omega) = \lbrace (0,0) \rbrace \cup \lbrace (m,j): j \leq 0 \rbrace \cup [(m,0),(n,0)]
	\end{equation*}		
	By Proposition~\ref{prop:conuclei_characterization}, $\delta_m$ is a conucleus if and only if $\delta_m(S_n^\omega)$ is closed under multiplication. Moreover, as we are going to show next, $\delta_m(S_n^\omega)$ is closed under multiplication if and only if $n = m$. To prove this, we define
	\begin{equation*}
		l \coloneq 
		\begin{cases}
			\frac{n}{2} & \text{if $n$ even;} \\
			\frac{n-1}{2} & \text{otherwise.}
		\end{cases}
	\end{equation*}
	If $m \leq l$, then $(l,0) \cdot (n-l,1) = (0,1) \notin \delta(S_n^\omega)$. On the other hand, if $l < m < n$, $(m,0)^2 = (2m-n,0) \notin \delta(S_n^\omega)$. Therefore $n = m$ and $\delta = \delta_n$. By the definition of $\delta_n$ and Lemma~\ref{lemma:Snomega_nontrivial_filter_conucleus}, $\delta = (x \cdot ((x^{n+1} \imp x^{2(n+1)}) \imp x^{n+1}))^{\mathbf{S}_n^\omega}$.  
\end{proof}

Before we proceed, we need to discard the multiplicative conuclei on $\mathbf{S}_n^\omega$ that can not be represented with a term in the language of hoops.

\begin{proposition}
	If $n \in \mathbb{N}$ and $\delta \in \lbrace \bot^{(\mathbf{S}_n^\omega)^+},\square \rbrace$, then there does not exist a term $t(x)$ in the language $\lbrace \cdot,\imp,\top \rbrace$ such that $\delta = t^{\mathbf{S}_n^\omega}$.
\end{proposition}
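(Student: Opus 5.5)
Both cases reduce to the fact that every term $t(x)$ in the language $\lbrace \cdot,\imp,\top \rbrace$ satisfies $\mathcal{H} \vDash t(\top) \approx \top$, so $t^{\mathbf{S}_n^\omega}((n,0)) = (n,0)$. For $\delta = \bot^{(\mathbf{S}_n^\omega)^+}$ this settles the matter at once: $\delta((n,0)) = (0,0) \neq (n,0)$, so no such term can exist.

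For $\delta = \square$ the value at $\top$ gives no obstruction, so I use subalgebras instead. Terms commute with embeddings: if $\mathbf{B}$ is a subalgebra of $\mathbf{S}_n^\omega$ and $b \in B$, then $t^{\mathbf{S}_n^\omega}(b) = t^{\mathbf{B}}(b) \in B$. The idea is to find an element $a \neq \top$ of $\mathbf{S}_n^\omega$ whose generated subalgebra $\langle a \rangle^{\mathbf{S}_n^\omega}$ does not contain $(0,0)$. For such $a$, any term $t$ has $t^{\mathbf{S}_n^\omega}(a) \in \langle a \rangle^{\mathbf{S}_n^\omega}$, so $t^{\mathbf{S}_n^\omega}(a) \neq (0,0) = \square a$.

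The natural candidate is $a \coloneq (n,-1)$. I claim that $F \coloneq \lbrace (n,j) : j \leq 0 \rbrace$ is a subuniverse. It contains $\top = (n,0)$. For products, use $x \cdot y = \neg(\neg x \oplus \neg y)$: with $x=(n,j)$ and $y=(n,k)$ we get $\neg x = (0,-j)$ and $\neg y = (0,-k)$. Their sum is $(0,-j-k)$, which lies below $(n,0)$, so the truncation in $\oplus$ is inactive and $x \cdot y = (n, j+k) \in F$. For implications, $x \imp y = \neg x \oplus y = (0,-j) \oplus (n,k) = (n, k-j) \wedge (n,0) = (n, \min\lbrace k-j, 0\rbrace) \in F$. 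So $F$ is closed under $\cdot$ and $\imp$, and it is in fact a copy of $\mathbf{S}^\omega$ via $(n,j) \mapsto -j$. Since $(0,0) \notin F$, we have $\langle a \rangle^{\mathbf{S}_n^\omega} \subseteq F$, and the argument of the previous paragraph applies.

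The only step needing care is the computation that $F$ is closed under $\imp$, and in particular that truncation in $\oplus$ never pushes a value outside $F$. The explicit formulas above handle this: truncation can only produce $(n,0)$, which lies in $F$. With that, $\square$ is not representable by any term in the language $\lbrace \cdot,\imp,\top \rbrace$, which completes the proof.
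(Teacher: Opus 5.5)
Your proof is correct and follows the paper's argument. The constant $\bot$ is excluded because $t(\top) \approx \top$, and $\square$ is excluded because the subalgebra generated by the coatom $(n,-1)$ lies inside $\lbrace (n,j) : j \leq 0 \rbrace$, which omits $(0,0) = \square(n,-1)$; the only difference is that you explicitly verify this set is a subuniverse (needing only the inclusion), whereas the paper simply asserts $\langle c \rangle^{\mathbf{S}_n^\omega} = \lbrace (n,j) : j \leq 0 \rbrace$.
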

\begin{proof}
	The proof for $\delta = \bot^{(\mathbf{S}_n^\omega)^+}$ is immediate, since $t(\top) \approx \top$ for each term $t(x)$ in the language of hoops. Now consider $\delta = \square$. If $c$ is the coatom of $\mathbf{S}_n^\omega$, then $\langle c \rangle^{\mathbf{S}_n^\omega} = \lbrace (n,j): j \leq 0 \rbrace$, where $\langle c \rangle^{\mathbf{S}_n^\omega}$ denotes the subalgebra generated by $c$. If $\delta$ can be represented with a term $t(x)$ in the language of hoops, then $(0,0) = \square c = t^{\mathbf{S}_n^\omega}(c) \in \langle c \rangle^{\mathbf{S}_n^\omega}$ and we reached a contradiction.  
\end{proof}

We now proceed with one of the main results of this section. The following theorem characterizes all multiplicative conuclei on an arbitrary proper subvariety of $\mathcal{WH}$. Before we continue, we ask the reader to recall Theorem~\ref{thm:proper_WH_varieties}.

\begin{theorem}\label{thm:proper_WH_multiplicative_conuclei}
	Let $(I,J,K)$ be a reduced triple and let $\mathcal{V}$ be the corresponding proper subvariety of $\mathcal{WH}$. If $t(x)$ is a term in the language $\lbrace \cdot,\imp,\top \rbrace$, the following are equivalent:
	\begin{enumerate}[label=\rm{(\roman*)}]
		\item $t(x)$ is a multiplicative conucleus on $\mathcal{V}$;
		\item there exist $I_1,I_2$ subsets of $I$, $J_1,J_2$ subsets of $J$ such that
		\begin{enumerate}[label=-]
			\item $I_1 \cup I_2 = I$, $I_1 \cap I_2 = \emptyset$,
			\item $(J_1,J_2) \in \lbrace (J,\emptyset), (\emptyset,J) \rbrace$,
			\item $\gcd(k,l) = 1$ for all $k \in I_1 \cup J_1$, $l \in I_2 \cup J_2$ 
		\end{enumerate}
		and
		\begin{enumerate}[label=-]
			\item $\mathbf{S}_i \vDash t(x) \approx x$ and $\mathbf{S}_j^\omega \vDash t(x) \approx x$ for all $i \in I_1$, $j \in J_1$,
			\item $\mathbf{S}_i \vDash t(x) \approx x^i$ for all $i \in I_2$,
			\item $\mathbf{S}_j^\omega \vDash t(x) \approx x \cdot ((x^{j+1} \imp x^{2(j+1)}) \imp x^{j+1})$ for all $j \in J_2$,
			\item if $K = \lbrace 0 \rbrace$, $\mathbf{S}^\omega \vDash t(x) \approx x$.
		\end{enumerate}
	\end{enumerate}
\end{theorem}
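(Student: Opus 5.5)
The plan is to reduce everything to the generators of $\mathcal{V}$ given by Theorem~\ref{thm:proper_WH_varieties}. Being a multiplicative conucleus on a variety is expressed by identities, so $t(x)$ is a multiplicative conucleus on $\mathcal{V}$ if and only if $t^{\mathbf{B}}$ is a multiplicative conucleus on each generator $\mathbf{B}$. The generators are the $\mathbf{S}_i$ ($i\in I$), the $\mathbf{S}_j^\omega$ ($j \in J$), and $\mathbf{S}^\omega$ when $K=\lbrace 0\rbrace$.

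For (ii) $\Rightarrow$ (i) this reduction finishes the argument almost immediately, by checking each generator. On $\mathbf{S}_i$ the interpretation of $t$ is $x$ or $x^i$, and both are multiplicative conuclei by Proposition~\ref{prop:Sn_multiplicative_conuclei}. On $\mathbf{S}_j^\omega$ it is $x$ or the term of Lemma~\ref{lemma:Snomega_nontrivial_filter_conucleus}, and both are multiplicative conuclei by Proposition~\ref{prop:Snomega_multiplicative_conuclei}. On $\mathbf{S}^\omega$ it is $x$. In this direction the gcd conditions play no role.

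For (i) $\Rightarrow$ (ii) I first classify the behaviour of $t$ on each generator.
\begin{enumerate}[label=(\alph*)]
\item On $\mathbf{S}_i$, Proposition~\ref{prop:Sn_multiplicative_conuclei} gives $t^{\mathbf{S}_i} \in \lbrace x, x^i\rbrace$, since $\bot$ is excluded because $t(\top)\approx\top$. This defines $I_1$ and $I_2$; if $i=1$ then $x=x^1$, and I put $1$ in $I_1$, say.
\item On $\mathbf{S}_j^\omega$, Proposition~\ref{prop:Snomega_multiplicative_conuclei} together with the proposition excluding $\bot$ and $\square$ leaves $x$ or the filter term.
\item On $\mathbf{S}^\omega$, a multiplicative conucleus is a conucleus, and the argument of Proposition~\ref{prop:cancellative_conuclei} (applied to the single algebra $\mathbf{S}^\omega$) gives $t^{\mathbf{S}^\omega}=x$.
\end{enumerate}

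It remains to prove the two structural constraints. The first is that all $j\in J$ behave alike. For this I will use Theorem~\ref{thm:local_identities}(1) at $i=0$. The function $t^{\mathbf{S}}$ is McNaughton, so it is linear, say $k_1a+k_2$, on some $[0,\varepsilon)$. Hence $t^{\mathbf{S}_j^\omega}(0,m)=(k_2 j, k_1 m)$ for all $m\ge 0$, with $k_1,k_2$ independent of $j$. If $t$ is the identity on $\mathbf{S}_j^\omega$, then $k_1=1$ and $k_2=0$. If $t$ is the filter term, then the elements $(0,m)$ lie outside $F$, so they are sent to $(0,0)$, giving $k_1=k_2=0$. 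These two cases are incompatible, so either every $j\in J$ falls in the first case or every $j\in J$ falls in the second, which gives $(J_1,J_2)\in\lbrace (J,\emptyset),(\emptyset,J)\rbrace$. This is the step I expect to be the crux, since the generators themselves do not interact and the link must come from the single McNaughton function $t^{\mathbf{S}}$.

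The second constraint is the gcd condition, which I will obtain through common subalgebras. Suppose $d>1$ divides both some $k\in I_1\cup J_1$ and some $l\in I_2\cup J_2$. Then $\mathbf{S}_d$ embeds into both algebras: it is a subalgebra of $\mathbf{S}_k$ when $d \mid k$, and it embeds into $\mathbf{S}_k^\omega$ via $\frac{r}{d}\mapsto(rk/d,0)$. Term functions commute with embeddings. On the $I_1\cup J_1$ side $t$ acts as the identity, so $t^{\mathbf{S}_d}=x$. On the $I_2\cup J_2$ side, consider the coatom $\frac{d-1}{d}$ of $\mathbf{S}_d$.
\begin{enumerate}[label=(\alph*)]
\item In $\mathbf{S}_l$, the power $x^l$ sends it to $0$, since $l\ge d$.
\item In $\mathbf{S}_l^\omega$, its image $((d-1)l/d,0)$ lies outside $F$, so the filter term sends it to $(0,0)$.
\end{enumerate}
Either way $t^{\mathbf{S}_d}$ is not the identity, a contradiction. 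Hence $\gcd(k,l)=1$, and together with the classification above this yields (ii).
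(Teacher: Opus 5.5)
Your proof is correct. Three parts follow the paper essentially verbatim: the direction (ii)$\Rightarrow$(i), the classification of $t$ on each generator, and the gcd step (the paper also passes through the common subalgebra $\mathbf{S}_{\gcd(k,l)}$ and evaluates at an element of $S_u\setminus\lbrace 0,1\rbrace$).

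The genuine difference is how you prove $(J_1,J_2)\in\lbrace (J,\emptyset),(\emptyset,J)\rbrace$. The paper uses no McNaughton functions here. It notes that $\mathbf{S}_1^\omega$ embeds into every $\mathbf{S}_j^\omega$, as the subalgebra $\lbrace (0,k): k\geq 0\rbrace\cup\lbrace (j,k): k\leq 0\rbrace$ generated by the atom $(0,1)$. So if $j_1\in J_1$ and $j_2\in J_2$, then $t$ would have to be both the identity and the filter term on $\mathbf{S}_1^\omega$, and this fails at the atom. Your remark that the generators ``do not interact'' is therefore inaccurate: they share $\mathbf{S}_1^\omega$, the same mechanism you use yourself in the gcd step. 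In fact the elements $(0,m)$, $m\geq 0$, at which you evaluate are exactly the lower half of that copy of $\mathbf{S}_1^\omega$.

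Your route via Theorem~\ref{thm:local_identities} at $0$ is still valid. The slope $k_1$ of $t^{\mathbf{S}}$ on $[0,\varepsilon)$ is a single integer, independent of $j$, and it must be $1$ on the identity side and $0$ on the filter side. What it costs is heavier machinery: finite piecewise linearity of McNaughton functions near $0$, plus the ultraproduct-based local identities theorem. What it buys is an explanation in the same terms as the converse construction of Theorem~\ref{thm:multiplicative_separation_conucleus}. There $f$ is chosen equal to $a$ or to $0$ on $[0,0^+]$ according to whether $J_1\neq\emptyset$, and your argument shows this choice is forced.

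One small point you leave implicit: for $i\geq 2$ the functions $x$ and $x^i$ differ on $\mathbf{S}_i$ (at the coatom). This is what gives $I_1\cap I_2=\emptyset$, and the paper checks it explicitly.
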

\begin{proof}
	Assume the hypothesis in (ii). By Proposition~\ref{prop:cancellative_conuclei}, Proposition~\ref{prop:Sn_multiplicative_conuclei} and Proposition~\ref{prop:Snomega_multiplicative_conuclei} we conclude that $t(x)$ is a multiplicative conucleus on every structure that generates $\mathcal{V}$. Hence, $t(x)$ is a multiplicative conucleus on $\mathcal{V}$.
	
	Now assume that $t(x)$ is a multiplicative conucleus on $\mathcal{V}$. Again, by Proposition~\ref{prop:cancellative_conuclei}, Proposition~\ref{prop:Sn_multiplicative_conuclei} and Proposition~\ref{prop:Snomega_multiplicative_conuclei},
	\begin{enumerate}[label=-]
		\item for all $i \in I$, $\mathbf{S}_i \vDash t(x) \approx t_i(x)$ for some $t_i(x) \in \lbrace x,x^i \rbrace$;
		\item for all $j \in J$, $\mathbf{S}_j^\omega \vDash t(x) \approx t_j(x)$ for some 
		\begin{equation*}
			t_j(x) \in \lbrace x, x \cdot ((x^{j+1} \imp x^{2(j+1)}) \imp x^{j+1}) \rbrace;
		\end{equation*}
		\item if $K = \lbrace 0 \rbrace$, $\mathbf{S}^\omega \vDash t(x) \approx x$.
	\end{enumerate}
	Next, we define
	\begin{enumerate}[label=-]
		\item $I_1 \coloneq \lbrace i \in I: t_i(x) = x \rbrace$;
		\item $I_2 \coloneq \lbrace i \in I: i > 1, t_i(x) = x^i \rbrace$;
		\item $J_1 \coloneq \lbrace j \in J: t_j(x) = x \rbrace$;
		\item $J_2 \coloneq \lbrace j \in J: t_j(x) = x \cdot ((x^{j+1} \imp x^{2(j+1)}) \imp x^{j+1}) \rbrace$.
	\end{enumerate} 		
	The equality $I_1 \cup I_2 = I$ is immediate. Suppose that $i \in I$ is such that $i \in I_1 \cap I_2$. Then $i \geq 2$ and $\mathbf{S}_i \vDash x \approx x^i$. This is a contradiction because, given $a \in S_i \backslash \lbrace 0,1\rbrace$, $a \neq 0 = a^i$. On the other hand, the equality $J_1 \cup J_2 = J$ is also immediate. We want to prove that $J_1 = \emptyset$ or $J_2 = \emptyset$. Assume that there exist $j_1,j_2$ elements of $J$ such that $j_1 \in J_1$ and $j_2 \in J_2$.  Then, $\mathbf{S}_1^\omega$ is isomorphic to a subalgebra of both $\mathbf{S}_{j_1}^\omega$ and $\mathbf{S}_{j_2}^\omega$. Therefore, $\mathbf{S}_1^\omega \vDash x \approx x \cdot ((x^{j_2+1} \imp x^{2(j_2+1)}) \imp x^{j_2+1})$. This is a contradiction, because if $a$ is the atom of $\mathbf{S}_1^{\omega}$, then
	\begin{equation*}
		a \neq (0,0) = a \cdot (0,0) = a \cdot ((a^{j_2+1} \imp a^{2(j_2+1)}) \imp a^{j_2+1}).
	\end{equation*}
	To end this proof, let $k \in I_1 \cup J_1$, $l \in I_2 \cup J_2$ and define $u \coloneq \gcd(k,l)$. Observe that $\mathbf{S}_u$ is isomorphic to a subalgebra of $\mathbf{A}$ and $\mathbf{B}$, where $\mathbf{A} \in \lbrace \mathbf{S}_k,\mathbf{S}_k^\omega \rbrace$ and $\mathbf{B} \in \lbrace \mathbf{S}_l,\mathbf{S}_l^\omega \rbrace$. Therefore, $\mathbf{S}_u \vDash x \approx s(x)$ for some $s(x) \in \lbrace x^l, x \cdot ((x^{l+1} \imp x^{2(l+1)}) \imp x^{l+1} \rbrace$. If $u \geq 2$ then $S_u \backslash \lbrace 0,1 \rbrace \neq \emptyset$. Particularly, if $a \in S_u \backslash \lbrace 0,1 \rbrace$, $a \neq s^{\mathbf{S}_u} (a)$, which is a contradiction. Hence, $u = 1$.  
\end{proof}

If $(I,J,K)$ is a reduced triple, $I_1,I_2 \subseteq I$ and $J_1,J_2 \subseteq J$ we say that $(I_1,I_2,J_1,J_2)$ is a \emph{multiplicative separation} of $(I,J,K)$ if the following conditions hold:
\begin{enumerate}[label=-]
	\item $I_1 \cup I_2 = I$, $I_1 \cap I_2 = \emptyset$;
	\item $(J_1,J_2) \in \lbrace (J,\emptyset), (\emptyset,J) \rbrace$;
	\item $\gcd(k,l) = 1$ for all $k \in I_1 \cup J_1$, $l \in I_2 \cup J_2$.
\end{enumerate}	

As a consequence of the previous theorem, for each term that defines a multiplicative conucleus on a proper subvariety of $\mathcal{WH}$ there must exist some multiplicative separation of the reduced triple corresponding to said subvariety. The reciprocal of this statement does also hold, which leads us to the second main result of this section. Not only does every multiplicative separation define a term that is a multiplicative conucleus on our subvariety, but this term can be effectively calculated.

\begin{theorem}\label{thm:multiplicative_separation_conucleus}
	Let $(I,J,K)$ be a reduced triple and let $\mathcal{V}$ be the corresponding proper subvariety of $\mathcal{WH}$. For each multiplicative separation $\Pi \coloneq (I_1,I_2,J_1,J_2)$ of $(I,J,K)$ there exists a term $t_\Pi(x)$, which can be effectively calculated, such that
	\begin{enumerate}[label=-]
		\item $\mathbf{S}_i \vDash t_\Pi(x) \approx x$ and $\mathbf{S}_j^\omega \vDash t_\Pi(x) \approx x$ for all $i \in I_1$, $j \in J_1$;
		\item $\mathbf{S}_i \vDash t_\Pi(x) \approx x^i$ for all $i \in I_2$;
		\item $\mathbf{S}_j^\omega \vDash t_\Pi(x) \approx x \cdot ((x^{j+1} \imp x^{2(j+1)}) \imp x^{j+1})$ for all $j \in J_2$;
		\item if $K = \lbrace 0 \rbrace$, $\mathbf{S}^\omega \vDash t_\Pi(x) \approx x$.
	\end{enumerate}
	
	Hence, $t_\Pi(x)$ is a multiplicative conucleus on $\mathcal{V}$.
\end{theorem}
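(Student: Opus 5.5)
The plan is to build $t_\Pi$ as a McNaughton function on $\mathbf{S}$ and then transfer its local behaviour to the finite chains and to the $\mathbf{S}_j^\omega$ through Theorem~\ref{thm:local_identities}. The case where $I_2 \cup J_2 = \emptyset$ is trivial: we take $t_\Pi(x) \coloneq x$. Assume from now on that $I_2 \cup J_2 \neq \emptyset$.

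Let $D_1 \coloneq I_1 \cup J_1$ and $D_2 \coloneq I_2 \cup J_2$. We also add $1$ to $D_1$ when $K=\{0\}$, since $\mathbf{S}^\omega$ embeds in the ultrapower of $\mathbf{S}$ near $1$ and requires identity behaviour at $1^-$. The relevant points are the rationals $p/d$ with $d \in D_1 \cup D_2$. Because $\gcd(k,l)=1$ for $k\in D_1$ and $l\in D_2$, such a point with $0<p/d<1$ determines uniquely, in lowest terms, whether its denominator divides an element of $D_1$ or of $D_2$: a reduced denominator $e>1$ dividing both $k$ and $l$ is impossible. The points $0$ and $1$ are shared, and we handle them by local slopes.

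We then prescribe a continuous piecewise linear $f\colon[0,1]\to[0,1]$ with integer coefficients and $f(1)=1$, as follows.
\begin{enumerate}[label=\rm{(\alph*)}]
\item Near each point whose reduced denominator divides some element of $D_1$, on both sides (or one side at $0$ and at $1$), set $f(x)=x$.
\item Near each point $p/d$ with $0<p/d<1$ whose reduced denominator divides some element of $D_2$, set $f\equiv 0$.
\item Near $1$ from the left we need $f(x)=x$. The finite-chain condition $x^i$ only asks $f(1)=1$ and $f=0$ at $(i-1)/i$ and below, which is compatible with slope $1$ near $1$ as long as $f$ drops to $0$ before $(i-1)/i$.
\item Near $0$ from the right, on $\mathbf{S}_j^\omega$ with $j\in J_2$, the element $(0,k)$ must map to $(0,0)$. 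By Theorem~\ref{thm:local_identities}(1) with $k_1=0,k_2=0$ this holds when $f\equiv 0$ on $[0,\varepsilon)$. When instead $J_1\neq\emptyset$ we need $f(x)=x$ near $0$. Since $(J_1,J_2)$ is one of $(J,\emptyset)$ and $(\emptyset,J)$, these two demands never clash.
\item For $j\in J_2$ at the point $1$, the required term gives identity on $\{(j,k):k\le 0\}$, which again is slope $1$ from the left.
\end{enumerate}
By Theorem~\ref{thm:local_identities}, these local linear germs force the required values on every $\mathbf{S}_i$ and $\mathbf{S}_j^\omega$. For $\mathbf{S}^\omega$ we use the analogous argument through the subalgebra near $1$.

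The finitely many prescribed germs sit at distinct rational points. We interpolate between them with integer-coefficient pieces kept inside $[0,1]$. The key tool is Lemma~\ref{lemma:farey_prop2}: refine to a Farey partition $\mathsf{Far}_n$ containing all prescribed points. On each unimodular interval $[a/b,c/d]$ the linear maps $-adx+ac$ and $bcx-ac$ pass from the value $a/b$ (respectively $0$) to $0$ (respectively $c/d$). This lets us connect an identity germ to a zero germ. To keep the germs themselves exact, we first refine further so that each prescribed point has its own small Farey neighbourhoods. Then Theorem~\ref{thm:mcnaughton_constructive} and Theorem~\ref{thm:mcnaughton} produce an effectively computable term $t_\Pi$ with $t_\Pi^{\mathbf{S}}=f$. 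The last sentence of the theorem then follows from Theorem~\ref{thm:proper_WH_multiplicative_conuclei}.

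The main obstacle is the interpolation step. An identity germ at $a/b$ and a zero germ at a neighbouring $c/d$ must be joinable by a continuous integer-slope function with values in $[0,1]$, and Lemma~\ref{lemma:farey_prop2} provides exactly this on unimodular intervals. The care needed is to check that, after Farey refinement, every gap between prescribed germs splits into unimodular intervals on which the lemma's functions can be patched. I would first try to place breakpoints only at Farey mediants, defining $f$ as the identity, as $0$, or as one of the lemma's linear maps on each Farey interval, and then verify continuity at the mediants.
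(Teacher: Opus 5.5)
Your proposal is correct and is essentially the paper's proof. The paper likewise labels the points of a sufficiently fine Farey partition so that the elements of $S_l\setminus\{0,1\}$ get identity neighbourhoods for $l \in I_1 \cup J_1$ and zero neighbourhoods for $l \in I_2 \cup J_2$; these are disjoint by the same $\gcd$ argument. It forces the identity on $[1^-,1]$, chooses the germ on $[0,0^+]$ according to whether $J_1 \neq \emptyset$, fills the mixed Farey intervals with the two maps of Lemma~\ref{lemma:farey_prop2} (so continuity is automatic), and transfers everything via Theorem~\ref{thm:local_identities}.

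One slip: the parenthetical in (a) imposing an identity germ at $0$ would clash with (d) when $I_1 \neq \emptyset$ and $J_2 \neq \emptyset$. At $0$ the finite chains only need $f(0)=0$, so the germ there should be dictated by (d) alone, as you yourself say when noting that $0$ and $1$ are handled by local slopes.
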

\begin{proof}
	First we define
	\begin{equation*}
		T_x \coloneq [\cup_{l \in I_1 \cup J_1} S_l] \backslash \lbrace 0,1 \rbrace \quad \text{and} \quad T_0 \coloneq [\cup_{k \in I_2 \cup J_2} S_k] \backslash \lbrace 0,1 \rbrace.
	\end{equation*}
	To begin, let us prove that $T_x \cap T_0 = \emptyset$. Recall that $\mathbf{B}$ is a subalgebra of $\mathbf{S}_n$ ($n \in \mathbb{N}$) if and only if $\mathbf{B} = \mathbf{S}_0$ or $\mathbf{B} = \mathbf{S}_m$ for some $m \in \mathbb{N}$ that divides $n$ (see, for example, Proposition 1.4 in \cite[Section~3.3.1]{FERREIRIM1992}). If the intersection is not empty, then there exist $l \in I_1 \cup J_1$, $k \in I_2 \cup J_2$ and $a \in \mathbb{Q}$ such that $a \in (S_l \cap S_k) \backslash \lbrace 0,1 \rbrace$. Therefore, $\langle a \rangle^\mathbf{S}$ is not trivial and $\langle a \rangle^{\mathbf{S}_l} = \langle a \rangle^{\mathbf{S}} = \langle a \rangle^{\mathbf{S}_k}$, where $\langle a \rangle^\mathbf{A}$ is the subalgebra generated by $a$. Hence, $\langle a \rangle^\mathbf{S} = \mathbf{S}_u$ for some $u \in \mathbb{N}$ that divides $l$ and $k$. Since $a \notin \lbrace 0,1 \rbrace$, $2 \leq u \leq \text{mcd}(l,k) = 1$ and we reached a contradiction. 
	
	For each $n \in \mathbb{N}_0$ and $q \in \mathsf{Far}_n$ let $q^-$ and $q^+$ denote the predecessor and successor (if they exist) of $q$ in $\mathsf{Far}_n$, respectively. By Proposition~\ref{prop:farey_prop1}, there exists some $m \in \mathbb{N}_0$ such that
	
	\begin{enumerate}[label=-]
		\item $T_x \cup T_0 \subseteq \mathsf{Far}_m$;
		\item for each $q \in T_x \cup T_0$ we have that $q^-$, $q^+ \in \mathsf{Far}_m \backslash (T_x \cup T_0)$;
		\item $0^+$, $1^-\in \mathsf{Far}_m \backslash (T_x \cup T_0)$;
		\item for all $A,B \in \lbrace [0,0^+], [1^-,1] \rbrace \cup \lbrace [q^-,q^+]: q \in T_x \cup T_0\rbrace$, if $A \neq B$ then $A \cap B = \emptyset$.
	\end{enumerate}
	Let $m \in \mathbb{N}_0$ be such that $\mathsf{Far}_m$ satisfies the aforementioned properties. We next define 
	\begin{equation*}
		\begin{aligned}
			T'_x & \coloneq 
			\begin{cases}
				T_x \cup \lbrace q^-: q \in T_x \rbrace \cup \lbrace q^+: q \in T_x \rbrace \cup \lbrace 1^-,1 \rbrace & \text{if $J_1 = \emptyset$;} \\
				T_x \cup \lbrace q^-: q \in T_x \rbrace \cup \lbrace q^+: q \in T_x \rbrace \cup \lbrace 0,0^+,1^-,1 \rbrace & \text{if $J_1 \neq \emptyset$;}
			\end{cases} \\
			T_0' & \coloneq \mathsf{Far}_m \backslash T_x'.
		\end{aligned}
	\end{equation*}
	Taking into consideration Lemma~\ref{lemma:farey_prop2}, for each $q \in \mathsf{Far}_m \backslash \lbrace 1 \rbrace$ we consider the function $f_{[q,q^+]} \colon [q,q^+] \to [0,1]$ defined, for each $a \in [q,q^+]$, by
	\begin{equation*}
		f_{[q,q^+]}(a) \coloneq 
		\begin{cases}
			0 & \text{if $q, q^+ \in T_0'$}; \\
			a & \text{if $q, q^+ \in T_x'$}; \\
			\mathsf{den}(q)\mathsf{num}(q^+)a - \mathsf{num}(q)\mathsf{num}(q^+) & \text{if $q \in T_0'$, $q^+ \in T_x'$}; \\
			-\mathsf{num}(q)\mathsf{den}(q^+)a + \mathsf{num}(q)\mathsf{num}(q^+) & \text{if $q \in T_x'$, $q^+ \in T_0'$}.
		\end{cases}
	\end{equation*} 
	Let $f$ be the function obtained by concatenating all functions $f_{[q,q^+]}$. Then, $f$ is a McNaughton function and $f(1) = 1$. By Theorem~\ref{thm:mcnaughton}, we can calculate a term $\overline{t}$ in the language $\lbrace \cdot,\imp,\top \rbrace$ such that $f = \overline{t}^\mathbf{S}$. By construction,
	\begin{enumerate}[label=-]
		\item for each $q \in T_x$, $f(a) = a$ for all $a \in [q^-,q^+]$;
		\item for each $q \in T_0$, $f(a) = 0$ for all $a \in [q^-,q^+]$;
		\item $f(a) = a$ for all $a \in [1^-,1]$;
		\item if $J_1 = \emptyset$, $f(a) = 0$ for all $a \in [0,0^+]$;
		\item if $J_1 \neq \emptyset$, $f(a) = a$ for all $a \in [0,0^+]$.
	\end{enumerate}
	If we define $t_{\Pi}(x) \coloneq t(x)$, then, as a consequence of Theorem~\ref{thm:local_identities},
	\begin{enumerate}[label=-]
		\item $\mathbf{S}_i \vDash t_\Pi(x) \approx x$ and $\mathbf{S}_j^\omega \vDash t_\Pi(x) \approx x$ for all $i \in I_1$, $j \in J_1$;
		\item $\mathbf{S}_i \vDash t_\Pi(x) \approx x^i$ for all $i \in I_2$;
		\item $\mathbf{S}_j^\omega \vDash t_\Pi(x) \approx x \cdot ((x^{j+1} \imp x^{2(j+1)}) \imp x^{j+1})$ for all $j \in J_2$.
	\end{enumerate}
	To end this proof, we show that $\mathbf{S}^\omega \vDash t_\Pi(x) \approx x$. Observe that, for all $k \in \mathbb{N}$, $\mathbf{S}^\omega$ is isomorphic to the subalgebra of $\mathbf{S}_k^\omega$ given by the set $\lbrace (k,j): j \leq 0 \rbrace$. Since $f(a) = a$ for all $a \in [1^-,1]$, by Theorem~\ref{thm:local_identities} we have that $t_\Pi^{\mathbf{S}_k^\omega}(k,j) = (k,j)$ for all $k \in \mathbb{N}$ and all $j \leq 0$. This concludes the proof.  
\end{proof}

\begin{remark}\label{rmk:multiplicative_separation_conucleus}
	In the previous proof, some of the conditions of $\mathsf{Far}_m$ are not necessary. For example, we do not need to build neighborhoods for the rationals in $\bigcup_{l \in I} S_l \backslash \lbrace 0,1 \rbrace$. On the other hand, if we do need a neighborhood, it is not indispensable that the bounds of these are not elements of $T_x \cup T_0$. Weakening the conditions required for the partition can allow us to choose a smaller $m$ and, therefore, obtain a less complex term. 
	
	Let $f$ be the McNaughton function obtained in the last proof. By  Theorem~\ref{thm:mcnaughton_constructive} we know that
	\begin{equation*}
		f = \bigoplus_{a \in \mathsf{Far}_m} k_a. h_{m,a}
	\end{equation*}	
	where, for each $a \coloneq \frac{c}{d} \in \mathsf{Far}_m$, the integer $k_a \in \mathbb{N}_0$ is such that $f(a) = \frac{k_a}{d}$. Observe that, by construction, $k_a \in \lbrace 0,c \rbrace$ for all $a = \frac{c}{d} \in \mathsf{Far}_m$. Moreover, $k_1 = 1$ and 
	\begin{equation*}
		f = \left(\prod_{\substack{a \in \mathsf{Far}_m \backslash \lbrace 1 \rbrace \\ k_a \neq 0}} (\neg h_{m,a})^{\mathsf{num(a)}} \right) \imp h_{m,1}.
	\end{equation*}
	As a consequence of the definition of each Schauder hat, for each $u \in \lbrace \neg h_{m,a}: a \in \mathsf{Far}_m \backslash \lbrace 1 \rbrace, k_a \neq 0 \rbrace \cup \lbrace h_{m,1} \rbrace$ we have that $u(1) = 1$. By Theorem~\ref{thm:mcnaughton}, we can calculate a term $\overline{u}(x)$ in the language $\lbrace \cdot,\imp,\top \rbrace$ such that $u = \overline{u}^\mathbf{S}$. If we define
	\begin{equation*}
		s(x) \coloneq \left(\prod_{\substack{a \in \mathsf{Far}_m \backslash \lbrace 1 \rbrace \\ k_a \neq 0}} (\overline{\neg h_{m,a}}(x))^{\mathsf{num(a)}} \right) \imp \overline{h_{m,1}}(x).
	\end{equation*}
	then $s(x)$ is a term in the language $\lbrace \cdot,\imp,\top \rbrace$ and $f = s^\mathbf{S}$. These remarks provide a useful method to show some examples.
\end{remark}

We have previously shown that the variety $\mathcal{WH}$ only has trivial conuclei (Proposition~\ref{prop:WH_conuclei}). This leads us to the following result.

\begin{corollary}
	For each subvariety $\mathcal{V}$ of $\mathcal{WH}$ there are, up to equivalence, a finite number of terms that define multiplicative conuclei on $\mathcal{V}$.	
\end{corollary}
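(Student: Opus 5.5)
We split according to whether $\mathcal{V}$ is proper or equal to $\mathcal{WH}$. If $\mathcal{V} = \mathcal{WH}$, then by Proposition~\ref{prop:WH_conuclei} every term that is a conucleus on $\mathcal{WH}$, and in particular every multiplicative one, is equivalent to $x$. Hence there is, up to equivalence, exactly one such term. (Multiplicativity of $x$ is obvious.) For the trivial subvariety every term is equivalent to $x$, so we may assume from now on that $\mathcal{V}$ is a nontrivial proper subvariety. By Theorem~\ref{thm:proper_WH_varieties}, $\mathcal{V}$ then corresponds to a reduced triple $(I,J,K)$ with $I,J$ finite.

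For a proper $\mathcal{V}$ we build an injection from equivalence classes of multiplicative conuclei into the set of multiplicative separations of $(I,J,K)$. Let $t(x)$ be a multiplicative conucleus on $\mathcal{V}$. By Theorem~\ref{thm:proper_WH_multiplicative_conuclei}, $t$ determines a multiplicative separation $\Pi = (I_1,I_2,J_1,J_2)$. Moreover, the interpretation of $t$ on each generator $\mathbf{S}_i$ ($i\in I$), $\mathbf{S}_j^\omega$ ($j \in J$) and, when $K=\lbrace 0\rbrace$, $\mathbf{S}^\omega$ is completely fixed by $\Pi$: it is $x$, $x^i$, or $x\cdot((x^{j+1}\imp x^{2(j+1)})\imp x^{j+1})$, as listed there.

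Suppose now that $t$ and $t'$ determine the same separation. Then they satisfy the same identity with respect to each generator, so each generator satisfies $t(x)\approx t'(x)$. Since $\mathcal{V}$ is generated by these finitely many algebras, and identities are preserved by $\mathsf{H}$, $\mathsf{S}$ and $\mathsf{P}$, we get $\mathcal{V}\vDash t(x)\approx t'(x)$. The map from equivalence classes to separations is therefore injective.

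It remains to count separations. A multiplicative separation consists of a partition of the finite set $I$ into $I_1,I_2$ together with one of two choices for $(J_1,J_2)$. So there are at most $2^{|I|}\cdot 2$ of them, and the number of terms up to equivalence is finite. (Theorem~\ref{thm:multiplicative_separation_conucleus} shows that every separation is actually realized, but this is only needed for an exact count, not for finiteness.)

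The only delicate point is the equivalence step: the chosen generators must genuinely generate $\mathcal{V}$, and equality of the induced functions on each generator must be exactly what Theorem~\ref{thm:proper_WH_multiplicative_conuclei} gives. Both hold directly by Theorem~\ref{thm:proper_WH_varieties} and the statement of that theorem.
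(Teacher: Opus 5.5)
Your proof is correct and follows the same route the paper intends: Proposition~\ref{prop:WH_conuclei} settles $\mathcal{WH}$ itself, and for a proper subvariety Theorem~\ref{thm:proper_WH_multiplicative_conuclei} forces the interpretation of $t$ on the finitely many generators to be one of finitely many options, indexed by the multiplicative separations. Your separate treatment of the trivial variety, which has no reduced triple, is extra care the paper omits; the separation attached to $t$ need not be unique (e.g.\ when $I=\{1\}$), but that is harmless once you fix a choice, so your injectivity argument stands.
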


We end this section with an example. For the sake of brevity, we are going to avoid some details involving the calculation of the terms (see Remark~\ref{rmk:multiplicative_separation_conucleus}). Consider the reduced triple $(\lbrace 2 \rbrace, \lbrace 1 \rbrace, \emptyset)$, whose corresponding variety is $\mathsf{V}(\mathbf{S}_2,\mathbf{S}_1^\omega)$. A simple calculation yields the following multiplicative separations:
\begin{equation*}
	\begin{aligned}
		\Pi_1 & \coloneq (\emptyset,\lbrace 2 \rbrace, \emptyset,\lbrace 1 \rbrace); \\
		\Pi_2 & \coloneq (\emptyset,\lbrace 2 \rbrace, \lbrace 1 \rbrace,\emptyset); \\
		\Pi_3 & \coloneq (\lbrace 2 \rbrace,\emptyset, \emptyset,\lbrace 1 \rbrace); \\
		\Pi_4 & \coloneq (\lbrace 2 \rbrace,\emptyset, \lbrace 1 \rbrace,\emptyset).
	\end{aligned}
\end{equation*}
To simplify the notation, the multiplicative conucleus corresponding to each multiplicative separation $\Pi_i$ will be denoted by $t_i$. We want terms $t_1(x)$, $t_2(x)$, $t_3(x)$ and $t_4(x)$, in the language $\lbrace \cdot,\imp,\top \rbrace$, such that
\begin{enumerate}[label=-]
	\item $\mathbf{S}_2 \vDash t_1(x) \approx x^2$, $\mathbf{S}_1^\omega \vDash t_1(x) \approx x \cdot ((x^{2} \imp x^4) \imp x^2)$, 
	\item $\mathbf{S}_2 \vDash t_2(x) \approx x^2$, $\mathbf{S}_1^\omega \vDash t_2(x) \approx x$,
	\item $\mathbf{S}_2 \vDash t_3(x) \approx x$, $\mathbf{S}_1^\omega \vDash t_3(x) \approx x \cdot ((x^{2} \imp x^4) \imp x^2)$,
	\item $\mathbf{S}_2 \vDash t_4(x) \approx x$, $\mathbf{S}_1^\omega \vDash t_4(x) \approx x$.
\end{enumerate}
Clearly, we can define $t_4(x) \coloneq x$. On the other hand, if $v(x) \coloneq (x \imp x^2) \imp x$ and $w(x) \coloneq ((x \imp x^2) \imp x) \imp x$, we consider
\begin{equation*}
	\begin{aligned}
		\overline{g}_{\frac{1}{3}}(x) & \coloneq (w(x) \imp v(x)) \imp v(x), \\
		\overline{g}_{\frac{1}{2}}(x) & \coloneq ((v(x) \imp w(x)) \imp ((v(x) \imp w(x)) \cdot x^2)) \imp x^2, \\
		\overline{g}_{\frac{2}{3}}(x) & \coloneq ((w(x) \imp (w(x) \cdot x^2)) \imp x^2) \imp w(x), \\ 
		\overline{g}_{1}(x) & \coloneq x^2 \cdot w(x).
	\end{aligned}
\end{equation*}
Then, $\neg h_{2,a} = \overline{g}_a^\mathbf{S}$ for each $a \in \lbrace \frac{1}{3}, \frac{1}{2}, \frac{2}{3} \rbrace$ and $h_{2,1} = \overline{g}_1^\mathbf{S}$. If we define  
\begin{equation*}
	\begin{aligned}
		t_1(x) & \coloneq \overline{g}_{\frac{2}{3}}(x)^2 \imp \overline{g}_{1}(x), \\
		t_2(x) & \coloneq [\overline{g}_{\frac{1}{3}}(x) \cdot \overline{g}_{\frac{2}{3}}(x)^2] \imp \overline{g}_{1}(x); \\
		t_3(x) & \coloneq [\overline{g}_{\frac{1}{2}}(x) \cdot \overline{g}_{\frac{2}{3}}(x)^2] \imp \overline{g}_{1}(x);
	\end{aligned}
\end{equation*}
then $t_1(x)$, $t_2(x)$ and $t_3(x)$ satisfy the desired properties. The graphs of the corresponding McNaughton function of each nontrivial conucleus are shown in the next figure.

\begin{figure}[htbp]
	\begin{tikzpicture}
		\begin{axis}
			[
			width = 4.5cm,
			height = 4.5cm,
			domain=0:1,
			xmin=0, xmax=1.05,
			ymin=0, ymax=1.05,
			enlarge x limits={abs=0.02},
			enlarge y limits={abs=0.02},
			axis lines=middle,
			axis equal image,
			clip=false,
			xlabel={$x$},
			ylabel={$y$},
			xlabel style={at={(axis description cs:1.025,0)},anchor=west},
			ylabel style={at={(axis description cs:0,1.025)},anchor=south},
			xtick={1/3,1/2,2/3,1},
			ytick={1/3,1/2,2/3,1},
			xticklabels={$\frac{1}{3}$,$\frac{1}{2}$,$\frac{2}{3}$,$1$},
			yticklabels={$\frac{1}{3}$,$\frac{1}{2}$,$\frac{2}{3}$,$1$},
			font=\small
			]
			
			\node[below left] at (axis cs:0,0) {$0$};
			
			\addplot[thick,blue] coordinates {(0,0)	(1/2,0)	(2/3,2/3) (1,1)} node[pos=1, above right] {$t_1^{\mathbf{S}}$};	
			
			\draw[dotted,thick]	(axis cs:0,2/3) -- (axis cs:2/3,2/3);
			\draw[dotted,thick]	(axis cs:2/3,0) -- (axis cs:2/3,2/3);
			\draw[dotted,thick]	(axis cs:0,1) -- (axis cs:1,1);
			\draw[dotted,thick]	(axis cs:1,0) -- (axis cs:1,1);
		\end{axis}
	\end{tikzpicture}
	\hfill
	\begin{tikzpicture}
		\begin{axis}
			[
			width = 4.5cm,
			height = 4.5cm,
			domain=0:1,
			xmin=0, xmax=1.05,
			ymin=0, ymax=1.05,
			enlarge x limits={abs=0.02},
			enlarge y limits={abs=0.02},
			axis lines=middle,
			axis equal image,
			clip=false,
			xlabel={$x$},
			ylabel={$y$},
			xlabel style={at={(axis description cs:1.025,0)},anchor=west},
			ylabel style={at={(axis description cs:0,1.025)},anchor=south},
			xtick={1/3,1/2,2/3,1},
			ytick={1/3,1/2,2/3,1},
			xticklabels={$\frac{1}{3}$,$\frac{1}{2}$,$\frac{2}{3}$,$1$},
			yticklabels={$\frac{1}{3}$,$\frac{1}{2}$,$\frac{2}{3}$,$1$},
			font=\small
			]
			
			\node[below left] at (axis cs:0,0) {$0$};
			
			\addplot[thick,red] coordinates {(0,0) (1/3,1/3) (1/2,0) (2/3,2/3) (1,1)} node[pos=1, above right] {$t_2^{\mathbf{S}}$};	
			
			\draw[dotted,thick]	(axis cs:0,1/3) -- (axis cs:1/3,1/3);
			\draw[dotted,thick]	(axis cs:1/3,0) -- (axis cs:1/3,1/3);
			\draw[dotted,thick]	(axis cs:0,2/3) -- (axis cs:2/3,2/3);
			\draw[dotted,thick]	(axis cs:2/3,0) -- (axis cs:2/3,2/3);
			\draw[dotted,thick]	(axis cs:0,1) -- (axis cs:1,1);
			\draw[dotted,thick]	(axis cs:1,0) -- (axis cs:1,1);
		\end{axis}
	\end{tikzpicture}
	\hfill
	\begin{tikzpicture}
		\begin{axis}
			[
			width = 4.5cm,
			height = 4.5cm,
			domain=0:1,
			xmin=0, xmax=1.05,
			ymin=0, ymax=1.05,
			enlarge x limits={abs=0.02},
			enlarge y limits={abs=0.02},
			axis lines=middle,
			axis equal image,
			clip=false,
			xlabel={$x$},
			ylabel={$y$},
			xlabel style={at={(axis description cs:1.025,0)},anchor=west},
			ylabel style={at={(axis description cs:0,1.025)},anchor=south},
			xtick={1/3,1/2,2/3,1},
			ytick={1/3,1/2,2/3,1},
			xticklabels={$\frac{1}{3}$,$\frac{1}{2}$,$\frac{2}{3}$,$1$},
			yticklabels={$\frac{1}{3}$,$\frac{1}{2}$,$\frac{2}{3}$,$1$},
			font=\small
			]
			
			\node[below left] at (axis cs:0,0) {$0$};
			
			\addplot[thick,green!50!black] coordinates {(0,0) (1/3,0) (1/2,1/2) (2/3,2/3) (1,1)} node[pos=1, above right] {$t_3^{\mathbf{S}}$};	
			
			\draw[dotted,thick]	(axis cs:0,1/2) -- (axis cs:1/2,1/2);
			\draw[dotted,thick]	(axis cs:1/2,0) -- (axis cs:1/2,1/2);
			\draw[dotted,thick]	(axis cs:0,2/3) -- (axis cs:2/3,2/3);
			\draw[dotted,thick]	(axis cs:2/3,0) -- (axis cs:2/3,2/3);
			\draw[dotted,thick]	(axis cs:0,1) -- (axis cs:1,1);
			\draw[dotted,thick]	(axis cs:1,0) -- (axis cs:1,1);
		\end{axis}
	\end{tikzpicture}
\end{figure}

\section{Mutltiplicative conuclei on varieties of basic hoops and BL-algebras}\label{sec:conuclei_on_BH_and_BL}

In this section we are going to focus on terms that define (multiplicative) conuclei on varieties of basic hoops and BL-algebras. Among some results, we prove that this problem is equivalent to finding terms that define (multiplicative) conuclei on some variety or some pair of varieties of Wajsberg hoops (Theorem~\ref{thm:basic_varieties_conuclei} and Theorem~\ref{thm:BL_conuclei}). 

The next result states a crucial property of (multiplicative) conuclei defined on nontrivial totally ordered hoops. It will be our starting point and an important tool to characterize arbitrary (multiplicative) conuclei on varieties of basic hoops and BL-algebras.

\begin{proposition}\label{prop:conucleus_on_hoop_chain}
	Let $\mathbf{A} = \bigoplus_{i \in I} \mathbf{A}_i$ be a nontrivial totally ordered hoop and $\delta\colon A \to A$ such that $\delta(A_i) \subseteq A_i$ for all $i \in I$. The following conditions are equivalent:
	\begin{enumerate}[label=\rm{(\roman*)}]
		\item $\delta$ is a (multiplicative) conucleus on $\mathbf{A}$;
		\item $\ensuremath{\left. \delta \right|_{A_i}}$ is a (multiplicative) conucleus on $\mathbf{A}_i$ for all $i \in I$.
	\end{enumerate}
\end{proposition}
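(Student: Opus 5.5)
We check the conditions defining a (multiplicative) conucleus one at a time, using only two facts: each component $\mathbf{A}_i$ is a subalgebra of $\bigoplus_{i\in I}\mathbf{A}_i$, and the order and product of the ordinal sum are given explicitly across components. The key observation is that any two elements lie either in a common component or in components $A_i$, $A_j$ with $i<j$. In the second case the ordinal sum behaves in a "projection-like" way, which the hypothesis $\delta(A_i)\subseteq A_i$ lets us exploit.

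For (i) $\Rightarrow$ (ii), fix $i\in I$. Since $\top\in A_i$ and $\delta(A_i)\subseteq A_i$, the map $\left.\delta\right|_{A_i}$ is a map $A_i\to A_i$. The order of $\mathbf{A}_i$ is the restriction of the order of $\mathbf{A}$, and the product of $\mathbf{A}_i$ is the restriction of the product of $\mathbf{A}$; both follow from the definition of the ordinal sum. The top elements of $\mathbf{A}$ and $\mathbf{A}_i$ coincide. Hence every defining condition (interior operator, $\delta(a)\cdot\delta(\top)=\delta(a)$, $\delta(a)\cdot\delta(b)\le\delta(a\cdot b)$, or equality in the multiplicative case) is inherited directly by the restriction.

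For (ii) $\Rightarrow$ (i), the conditions $\delta(a)\le a$, $\delta(\delta(a))=\delta(a)$ and $\delta(a)\cdot\delta(\top)=\delta(a)$ each involve a single element $a$. We pick $i$ with $a\in A_i$ and apply (ii) to $\left.\delta\right|_{A_i}$. This is legitimate because $\delta(a)\in A_i$, $\top\in A_i$, and $\delta(\top)$ is the same element whether computed in $\mathbf{A}$ or in $\mathbf{A}_i$.

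The binary conditions are monotonicity and (multiplicative) submultiplicativity. Take $a\in A_i$ and $b\in A_j$. If $a$ and $b$ lie in a common component (in particular, whenever one of them is $\top$), we apply (ii) there. Otherwise, by commutativity and symmetry, we may assume $i<j$, $a\in A_i\setminus\{\top\}$ and $b\in A_j\setminus\{\top\}$; then $a<b$ and $a\cdot b=a$. Since $\delta(a)\le a<\top$, we get $\delta(a)\in A_i\setminus\{\top\}$, while $\delta(b)\in A_j$.

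For monotonicity: if $\delta(b)=\top$ there is nothing to prove. Otherwise $\delta(b)\in A_j\setminus\{\top\}$ lies above every element of $A_i\setminus\{\top\}$, so $\delta(a)\le\delta(b)$. For the product: the multiplication rule of the ordinal sum gives $\delta(a)\cdot\delta(b)=\delta(a)=\delta(a\cdot b)$. So in the mixed case we even get equality, which proves both the conucleus and the multiplicative version at once.

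The only delicate step is this mixed-component case. There we must make sure that $\delta(a)$ does not escape to $\top$, which would destroy the ordinal-sum product rule. This is exactly why we use $\delta(a)\le a<\top$, and why the hypothesis $\delta(A_i)\subseteq A_i$ is needed.
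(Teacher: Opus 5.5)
Your proposal is correct and follows essentially the same route as the paper: restriction for (i)$\Rightarrow$(ii), and for (ii)$\Rightarrow$(i) a split into same-component pairs (handled by (ii)) and mixed pairs $a\in A_i\setminus\{\top\}$, $b\in A_j$ with $i<j$, where the ordinal-sum rules give $\delta(a)\le\delta(b)$ and $\delta(a)\cdot\delta(b)=\delta(a)=\delta(a\cdot b)$. The only difference is cosmetic: the paper first derives $\delta(\top)=\top$ from $\delta(\top)\in\bigcap_{i\in I}A_i$ to dispose of pairs involving $\top$, whereas you treat such pairs as lying in a common component and use $\delta(a)\le a<\top$, which also covers the one-component case, where $\delta(\top)=\top$ need not hold.
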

\begin{proof}
	We are going to restrict ourselves to the proof for conuclei. The proof for multiplicative conuclei is almost identical. First observe that $\delta(\top) \in \bigcap_{i \in I} A_i = \lbrace \top \rbrace$, that is, $\delta(\top) = \top$.
	
	We first assume that $\delta$ is a conucleus on $\mathbf{A}$. If $a,b \in A_i$, then the conditions $\delta(a) \leq a$, $\delta(a \land b) \leq \delta(a)$ and $\delta(\delta(a)) = \delta(a)$ are immediate. Moreover, taking into consideration the definition of ordinal sums and the fact that $\delta(a),\delta(b) \in A_i$,
	\begin{equation*}
		\delta(a) \cdot_i \delta(b) = \delta(a) \cdot \delta(b) \leq \delta(a \cdot b) = \delta(a \cdot_i b).
	\end{equation*}              
	
	Now we assume that $\ensuremath{\left. \delta \right|_{A_i}}$ is a conucleus on $\mathbf{A}_i$ for all $i \in I$. Let $a,b \in A$ and $i,j \in I$ such that $a \in A_i$, $b \in A_j$. The conditions $\delta(a) \leq a$ and $\delta(\delta(a)) = \delta(a)$ are immediate. Particularly, observe that $\delta(c) = \top$ if and only if $c = \top$.  To prove the monotonicity, let us assume that $a \leq b$. If $b = \top$, then there is nothing to prove. If $b < \top$, then $i \leq j$, $\delta(a) \in A_i \backslash \lbrace \top \rbrace$ and $\delta(b) \in A_j \backslash \lbrace \top \rbrace$. If $i = j$ then, since $\ensuremath{\left. \delta \right|_{A_i}}$ is a conucleus, $\delta(a) \leq \delta(b)$. If $i < j$, then the definition of ordinal sum implies $\delta(a) \leq \delta(b)$. To prove the inequality $\delta(a) \cdot \delta(b) \leq \delta(a \cdot b)$ we analyze the same cases as before. If $b = \top$, then there is nothing to prove. If $b < \top$ and $i = j$, then
	\begin{equation*}
		\delta(a) \cdot \delta(b) = \delta(a) \cdot_i \delta(b) \leq \delta(a \cdot_i b) = \delta(a \cdot b).
	\end{equation*}
	On the other hand, if $i < j$, $\delta(a) \cdot \delta(b) = \delta(a) = \delta(a \cdot b)$.  
\end{proof}

Taking into consideration the previous result, we can proceed with one of the main result of this section.

\begin{theorem}\label{thm:basic_varieties_conuclei}
	Let $\mathcal{V}  \coloneq \mathsf{V}(\mathcal{K})$, where $\mathcal{K}$ is some nonempty class of nontrivial totally ordered hoops. If $t(x)$ is a term in the language $\lbrace \cdot,\imp,\top \rbrace$, the following are equivalent:
	\begin{enumerate}[label=\rm{(\roman*)}]
		\item $t(x)$ is a \textup{(}multiplicative\textup{)} conucleus on $\mathcal{V}$;
		\item $t(x)$ is a \textup{(}multiplicative\textup{)} conucleus on
		\begin{equation*}
			\textstyle \mathcal{U} \coloneq \mathsf{V}(\lbrace \mathbf{A}_j \in \mathcal{WH}: \bigoplus_{i \in I} \mathbf{A}_i \in \mathcal{K}, j \in I \rbrace).
		\end{equation*}
	\end{enumerate}
\end{theorem}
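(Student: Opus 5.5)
The plan is to reduce both directions to checking the defining identities on the totally ordered generators, and to use Proposition~\ref{prop:conucleus_on_hoop_chain} to pass between a chain and its Wajsberg components. As observed earlier, being a (multiplicative) conucleus on a variety amounts to satisfying a finite set of identities. So $t(x)$ is a (multiplicative) conucleus on $\mathsf{V}(\mathcal{K})$ if and only if it is one on every member of $\mathcal{K}$. Likewise for $\mathcal{U}$ and the class $\mathcal{K}' \coloneq \lbrace \mathbf{A}_j \in \mathcal{WH} : \bigoplus_{i\in I}\mathbf{A}_i \in \mathcal{K},\ j\in I\rbrace$ of its generators. It therefore suffices to prove the following: for each $\mathbf{A} = \bigoplus_{i\in I}\mathbf{A}_i \in \mathcal{K}$, $t^{\mathbf{A}}$ is a (multiplicative) conucleus on $\mathbf{A}$ if and only if $t^{\mathbf{A}_i}$ is one on $\mathbf{A}_i$ for every $i \in I$.

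The key preliminary step is to show that, for a $\lbrace\cdot,\imp,\top\rbrace$-term $t(x)$ and an ordinal sum $\mathbf{A} = \bigoplus_{i\in I}\mathbf{A}_i$, we have $t^{\mathbf{A}}(A_i) \subseteq A_i$ and $t^{\mathbf{A}}|_{A_i} = t^{\mathbf{A}_i}$. I would argue that each $A_i$ is a subuniverse of $\mathbf{A}$, since the operations of the ordinal sum restricted to $A_i$ coincide with $\cdot_i$ and $\imp_i$, and $\top \in A_i$. Then, by induction on $t$, evaluating $t$ at $a \in A_i$ only uses elements of $A_i$. This is the analogue of Lemma~\ref{lemma:BLterm_description} without $\bot$, and it is easier here because no constant outside $A_i$ occurs. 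It gives exactly the hypothesis $\delta(A_i)\subseteq A_i$ required by Proposition~\ref{prop:conucleus_on_hoop_chain}, with $\delta \coloneq t^{\mathbf{A}}$ and $\delta|_{A_i} = t^{\mathbf{A}_i}$.

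Applying Proposition~\ref{prop:conucleus_on_hoop_chain} to each $\mathbf{A} \in \mathcal{K}$ now gives the equivalence: $t^{\mathbf{A}}$ is a (multiplicative) conucleus on $\mathbf{A}$ if and only if each $t^{\mathbf{A}_i}$ is one on $\mathbf{A}_i$. Conversely, every member of $\mathcal{K}'$ is, by definition, a component of some chain in $\mathcal{K}$, and the components are nontrivial totally ordered Wajsberg hoops by Theorem~\ref{thm:totally_ordered_hoops_decomposition}. Taking the conjunction over all of $\mathcal{K}$ then yields (i) $\Leftrightarrow$ (ii).

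The only delicate point I foresee is the verification that restriction to a component commutes with term evaluation, in particular for $\imp$ between elements of the same component, including $\top$. This follows directly from the case definition of the ordinal sum, since $x\imp y = x \imp_i y$ whenever $x,y\in A_i$. A minor subtlety is that conuclei are required to satisfy $\delta(a)\cdot\delta(\top)=\delta(a)$, but $t(\top)\approx\top$ for hoop terms, so this condition is automatic on both sides.
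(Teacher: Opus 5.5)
Your proposal is correct and follows essentially the paper's proof: each component $\mathbf{A}_j$ is a subalgebra, so $t^{\mathbf{A}}|_{A_j} = t^{\mathbf{A}_j}$, and Proposition~\ref{prop:conucleus_on_hoop_chain} then transfers the (multiplicative) conucleus property between each chain in $\mathcal{K}$ and its components. The only differences are that the paper obtains (i)$\Rightarrow$(ii) directly from the subalgebra observation rather than from the proposition, and that it leaves implicit the reduction to generators via the defining identities, which you make explicit.
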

\begin{proof}
	The implication (i) $\Rightarrow$ (ii) is immediate, because $\mathbf{A}_j$ is a subalgebra of $\bigoplus_{i \in I} \mathbf{A}_i$ for all $j \in I$.
	
	Now suppose that $t(x)$ is a (multiplicative) conucleus on $\mathcal{U}$ and let $\mathbf{A} \coloneq \bigoplus_{i \in I} \mathbf{A}_i \in \mathcal{K}$. If $j \in I$, then $\mathbf{A}_j$ is a subalgebra of $\mathbf{A}$ and $\ensuremath{\left. t^\mathbf{A} \right|_{A_j}} = t^{\mathbf{A}_j}$. Therefore $t^{\mathbf{A}}(A_j) \subseteq A_j$ and, as a consequence of the hypothesis, $\ensuremath{\left. t^\mathbf{A} \right|_{A_j}}$ is a (multiplicative) conucleus on $\mathbf{A}_j$. By Proposition~\ref{prop:conucleus_on_hoop_chain}, $t^\mathbf{A}$ is a (multiplicative) conucleus on $\mathbf{A}$. This concludes the proof.  
\end{proof}

The previous theorem states that the problem of finding (multiplicative) conuclei on varieties of basic hoops reduces to calculating all (multiplicative) conuclei on some variety of Wajsberg hoops. Our objective is to obtain a similar result for varieties of BL-algebras. To reach this goal, we need the following result.

\begin{proposition}\label{prop:bl_chain_term_conucleus}
	Let $\mathcal{V}$ be a subvariety of $\mathcal{BL}$ and $t(x)$ a \textup{(}multiplicative\textup{)} conucleus on $\mathcal{V}$ such that $t(\top) \approx \top$. If $\mathbf{A} \coloneq (\bigoplus_{i \in I} \mathbf{A}_i)^+ \in \mathcal{V}_{\textnormal{fsi}}$, 
	\begin{enumerate}[label=\rm{(\arabic*)}]
		\item $t^\mathbf{A}(A_i) \subseteq A_i$ for all $i \in I$;
		\item $\ensuremath{\left. t^\mathbf{A} \right|_{A_i}}$ is a \textup{(}multiplicative\textup{)} conucleus on $\mathbf{A}_i$ for all $i \in I$.
	\end{enumerate}
\end{proposition}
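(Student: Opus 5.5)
Item (1) is immediate from Corollary~\ref{cor:BL_terms}: since $t(\top) \approx \top$, there are terms $s(x)$, $u(x)$ in the language $\lbrace \cdot,\imp,\top \rbrace$ with $\left. t^\mathbf{A} \right|_{A_0} = s^{\mathbf{A}_0}$ and $\left. t^\mathbf{A} \right|_{A_i} = u^{\mathbf{A}_i}$ for $i \neq 0$. Because each $\mathbf{A}_i$ is closed under the hoop operations (it is a subalgebra of the hoop reduct of $\mathbf{A}$, and $\mathbf{A}_0$ is even a subalgebra of $\mathbf{A}$ as a bounded hoop), we get $t^\mathbf{A}(A_i) \subseteq A_i$.

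For item (2), the plan is to pass from $\mathbf{A}$ to the restriction using only the defining inequalities, which are inherited by restriction whenever the map preserves the subset. Fix $i \in I$ and put $\delta \coloneq t^\mathbf{A}$, a (multiplicative) conucleus on $\mathbf{A}$ since $\mathbf{A} \in \mathcal{V}$. By (1), $\delta$ restricts to a map $\delta_i \colon A_i \to A_i$. We then check the axioms one by one.
\begin{enumerate}[label=-]
	\item $\delta_i(a) \leq a$, monotonicity and $\delta_i(\delta_i(a)) = \delta_i(a)$ hold because the order of $\mathbf{A}_i$ is the restriction of the order of $\mathbf{A}$.
	\item The multiplication of $\mathbf{A}_i$ agrees with that of $\mathbf{A}$ on $A_i$ (both for $i = 0$ and for $i \neq 0$, by the definition of ordinal sums). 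Hence $\delta_i(a)\cdot_i\delta_i(b) = \delta(a)\cdot\delta(b) \leq \delta(a\cdot b) = \delta_i(a\cdot_i b)$, with equality in the multiplicative case.
	\item For the unit condition we need $\delta_i(a)\cdot_i\delta_i(\top) = \delta_i(a)$. Since $t(\top)\approx\top$, we have $\delta(\top) = \top$, and $\top$ is the unit of each $\mathbf{A}_i$, so this is trivial.
\end{enumerate}

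The only subtle point is item (1). One must make sure $\top \in A_i$ for every $i$ and that $A_0$ is a subalgebra containing $\bot$, so that terms evaluated there remain in $A_0$. This is guaranteed by Remark~\ref{rmk:BL_chains_decomposition} together with the term descriptions of Corollary~\ref{cor:BL_terms}. Once (1) is in hand, (2) is a routine verification. For $i = 0$, $\mathbf{A}_0$ may be regarded as a bounded hoop; the conucleus conditions do not involve $\bot$, so nothing more is needed. The argument could alternatively be phrased through Proposition~\ref{prop:conucleus_on_hoop_chain} applied to the hoop reduct $\bigoplus_{i\in I}\mathbf{A}_i$, using (i)$\Rightarrow$(ii).
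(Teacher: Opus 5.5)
Your proof is correct and follows the paper's route. Item (1) is read off from Corollary~\ref{cor:BL_terms}. Your axiom-by-axiom check for item (2) is exactly the (i)$\Rightarrow$(ii) direction of Proposition~\ref{prop:conucleus_on_hoop_chain}, which the paper simply cites, applied to the hoop reduct $\bigoplus_{i\in I}\mathbf{A}_i$; as you note, this is legitimate because the conucleus conditions do not involve $\bot$.
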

\begin{proof}
	Item (1) is an immediate consequence of Corollary~\ref{cor:BL_terms}. Item (2) follows from Proposition~\ref{prop:conucleus_on_hoop_chain}.  
\end{proof}

We continue with the second main result of this section, a characterization of (multiplicative) conuclei on varieties of BL-algebras.

\begin{theorem}\label{thm:BL_conuclei}
	Let $\mathcal{V}  \coloneq \mathsf{V}(\mathcal{K})$, where $\mathcal{K}$ is some nonempty class of nontrivial BL-chains. If $t(x)$ is a term in the language $\lbrace \cdot,\imp,\bot,\top \rbrace$, the following are equivalent:
	\begin{enumerate}[label=\rm{(\roman*)}]
		\item $t(x)$ is a \textup{(}multiplicative\textup{)} conucleus on $\mathcal{V}$;
		\item $\mathcal{V} \vDash t(x) \approx \bot$ or $\mathcal{V} \vDash t(x) \approx t_0(\neg\neg x) \cdot t_1(\neg\neg x \imp x)$ for some terms $t_0(x)$ and $t_1(x)$ in the language $\lbrace \cdot,\imp,\top \rbrace$ such that $t_0(x)$ is a \textup{(}multiplicative\textup{)} conucleus on 
		\begin{equation*}
			\mathcal{V}_0 \coloneq \mathsf{V}(\lbrace \mathbf{A}_0: \textstyle (\bigoplus_{i \in I} \mathbf{A}_i)^+ \in \mathcal{K}, 0 \in I \rbrace)
		\end{equation*}
		and $t_1(x)$ is a \textup{(}multiplicative\textup{)} conucleus on
		\begin{equation*}
			\mathcal{V}_1 \coloneq \mathsf{V}(\lbrace \mathbf{A}_j: \textstyle (\bigoplus_{i \in I} \mathbf{A}_i)^+ \in \mathcal{K}, j \in I \backslash \lbrace 0 \rbrace \rbrace \cup \lbrace \mathbf{S}_0 \rbrace).	
		\end{equation*}
	\end{enumerate}
\end{theorem}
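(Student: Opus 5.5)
The plan is to reduce everything to the chains in $\mathcal{K}$. Since $\mathcal{V} = \mathsf{V}(\mathcal{K})$ and being a (multiplicative) conucleus is expressed by identities, $t(x)$ is a (multiplicative) conucleus on $\mathcal{V}$ if and only if $t^{\mathbf{A}}$ is one on every $\mathbf{A} \in \mathcal{K}$. The key tools are Corollary~\ref{cor:BL_terms}, Proposition~\ref{prop:BLterms_equivalence}, Proposition~\ref{prop:bl_chain_term_conucleus} and Proposition~\ref{prop:conucleus_on_hoop_chain}. The last of these is stated for hoops, and it applies to $\mathbf{A} = (\bigoplus_{i\in I}\mathbf{A}_i)^+$ because the bound $\bot$ lies in $A_0$ and the conucleus conditions only involve the hoop reduct.

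\textbf{(i) $\Rightarrow$ (ii).} By the proposition on one-variable terms, either $t(\top) \approx \bot$ or $t(\top) \approx \top$. In the first case, $t(x) \leq t(\top) = \bot$ holds in $\mathcal{V}$, so $\mathcal{V} \vDash t(x) \approx \bot$. In the second case, Corollary~\ref{cor:BL_terms} gives $\{\cdot,\imp,\top\}$-terms $s$ and $u$ such that, for every fsi BL-chain $\mathbf{A}$, we have $t^{\mathbf{A}}|_{A_0} = s^{\mathbf{A}_0}$ and $t^{\mathbf{A}}|_{A_i} = u^{\mathbf{A}_i}$ for $i \neq 0$. Set $t_0 \coloneq s$ and $t_1 \coloneq u$. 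Proposition~\ref{prop:BLterms_equivalence} then gives $\mathbf{A} \vDash t(x) \approx t_0(\neg\neg x)\cdot t_1(\neg\neg x \imp x)$ for each $\mathbf{A} \in \mathcal{K}$, and hence in $\mathcal{V}$. By Proposition~\ref{prop:bl_chain_term_conucleus}, $t_0^{\mathbf{A}_0}$ and $t_1^{\mathbf{A}_j}$ ($j \neq 0$) are (multiplicative) conuclei on the generators of $\mathcal{V}_0$ and of $\mathcal{V}_1$ respectively. On the trivial generator $\mathbf{S}_0$ of $\mathcal{V}_1$ every term is trivially a multiplicative conucleus. Since the defining conditions are identities, they pass to the generated varieties $\mathcal{V}_0$ and $\mathcal{V}_1$.

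\textbf{(ii) $\Rightarrow$ (i).} The term $\bot$ is a multiplicative conucleus on every bounded hoop (Example~\ref{ex:conuclei} with $a_0 = \bot$). So assume $\mathcal{V} \vDash t(x) \approx t_0(\neg\neg x)\cdot t_1(\neg\neg x\imp x)$ and fix $\mathbf{A} = (\bigoplus_{i\in I}\mathbf{A}_i)^+ \in \mathcal{K}$. Note that $\top \in A_i$ for every $i$. By the computation in the proof of Proposition~\ref{prop:BLterms_equivalence}, $t^{\mathbf{A}}$ agrees with $t_0^{\mathbf{A}_0}$ on $A_0$ (using $t_1(\top)=\top$) and with $t_1^{\mathbf{A}_i}$ on $A_i$ for $i\neq 0$ (using $t_0(\top)=\top$), since each $\mathbf{A}_i$ is a subalgebra of the hoop reduct. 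In particular $t^{\mathbf{A}}(A_i)\subseteq A_i$ for all $i$. Each $\mathbf{A}_0$ belongs to $\mathcal{V}_0$ and each $\mathbf{A}_j$ with $j\neq 0$ belongs to $\mathcal{V}_1$, so the restrictions are (multiplicative) conuclei. Proposition~\ref{prop:conucleus_on_hoop_chain} then makes $t^{\mathbf{A}}$ a (multiplicative) conucleus on the hoop reduct of $\mathbf{A}$, hence on $\mathbf{A}$. As this holds for every $\mathbf{A} \in \mathcal{K}$, it holds on $\mathcal{V}$.

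The main obstacle is the bookkeeping in (i)$\Rightarrow$(ii): the terms $t_0$ and $t_1$ must be uniform across all of $\mathcal{K}$. This is exactly what Corollary~\ref{cor:BL_terms} provides, since $s$ and $u$ depend only on $t$. A second point to check is that adding $\mathbf{S}_0$ to the generators of $\mathcal{V}_1$ (needed when every chain has only the component $\mathbf{A}_0$) is harmless in both directions.
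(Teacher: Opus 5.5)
Your proposal is correct and follows the paper's proof essentially step for step. For (i)$\Rightarrow$(ii) you combine Corollary~\ref{cor:BL_terms}, Proposition~\ref{prop:BLterms_equivalence} and Proposition~\ref{prop:bl_chain_term_conucleus}; for (ii)$\Rightarrow$(i) you use the componentwise computation of Proposition~\ref{prop:BLterms_equivalence} together with Proposition~\ref{prop:conucleus_on_hoop_chain}, and you also make explicit several points the paper leaves implicit: monotonicity forcing $t(x)\approx\bot$ when $t(\top)\approx\bot$, the harmlessness of the trivial generator $\mathbf{S}_0$, and the transfer from generators to varieties via identities.
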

\begin{proof}
	First assume that $t(x)$ is a (multiplicative) conucleus on $\mathcal{V}$. If $t(\top) \approx \bot$, then $\mathcal{V} \vDash t(x) \approx \bot$. If $t(\top) \approx \top$ then, by Corollary~\ref{cor:BL_terms} and Proposition~\ref{prop:BLterms_equivalence}, there exist terms $t_0(x)$ and $t_1(x)$ in the language $\lbrace \cdot,\imp,\top \rbrace$ such that, for all $\mathbf{A} \coloneq (\bigoplus_{i \in I} \mathbf{A}_i)^+ \in \mathcal{K}$,
	\begin{enumerate}[label=-]
		\item $\ensuremath{\left. t^\mathbf{A} \right|_{A_0}} = \ensuremath{\left. t_0^\mathbf{A} \right|_{A_0}} = t_0^\mathbf{A_0}$
		\item $\ensuremath{\left. t^\mathbf{A} \right|_{A_i}} = \ensuremath{\left. t_1^\mathbf{A} \right|_{A_i}} = t_1^\mathbf{A_i}$ for all $i \in I \backslash \lbrace 0 \rbrace$;
		\item $\mathbf{A} \vDash t(x) \approx t_0(\neg\neg x) \cdot t_1(\neg\neg x \imp x)$.
	\end{enumerate}
	Furthermore, by Corollary~\ref{prop:bl_chain_term_conucleus}, $\ensuremath{\left. t^\mathbf{A} \right|_{A_i}}$ is a (multiplicative) conucleus on $\mathbf{A}_i$ for all $i \in I$. Since $\mathbf{A} \in \mathcal{K}$ is arbitrary, $t_0(x)$ is a (multiplicative) conucleus on $\mathcal{V}_0$ and $t_1(x)$ is a (multiplicative) conucleus on $\mathcal{V}_1$. 
	
	Now assume that (ii) holds. If $\mathcal{V} \vDash t(x) \approx \bot$ there is nothing to prove. Let us assume that $\mathcal{V} \vDash t(x) \approx t_0(\neg\neg x) \cdot t_1(\neg\neg x \imp x)$ with $t_0(x)$, $t_1(x)$ (multiplicative) conuclei on $\mathcal{V}_0$ and $\mathcal{V}_1$, respectively. By Proposition~\ref{prop:BLterms_equivalence}, for all $\mathbf{A} \coloneq (\bigoplus_{i \in I} \mathbf{A}_i)^+ \in \mathcal{K}$, $\ensuremath{\left. t^\mathbf{A} \right|_{A_0}} = \ensuremath{\left. t_0^\mathbf{A} \right|_{A_0}} = t_0^\mathbf{A_0}$ and $\ensuremath{\left. t^\mathbf{A} \right|_{A_i}} = \ensuremath{\left. t_1^\mathbf{A} \right|_{A_i}} = t_1^\mathbf{A_i}$ for all $i \in I \backslash \lbrace 0 \rbrace$. As a consequence of the hypothesis we conclude that $\ensuremath{\left. t^\mathbf{A} \right|_{A_i}}$ is a (multiplicative) conucleus on $\mathbf{A}_i$ for all $i \in I$. By Proposition~\ref{prop:conucleus_on_hoop_chain}, $t^\mathbf{A}$ is a (multiplicative) conucleus on $\bigoplus_{i \in I} \mathbf{A}_i$ (and therefore on $\mathbf{A}$). Since $\mathbf{A} \in \mathcal{K}$ is arbitrary, $t(x)$ is a (multiplicative) conucleus on $\mathcal{V}$.  
\end{proof}

We end this section with some examples. The first examples we present are in the context of varieties of basic hoops.
\begin{example}
	To begin, we define $\mathcal{V} \coloneq \mathsf{V}(\mathbf{S}_n \oplus \mathbf{S}_n)$ for some $n \geq 2$. In this case, it is clear that $\mathcal{U} = \mathsf{V}(\mathbf{S}_n)$. By Proposition~\ref{prop:Sn_multiplicative_conuclei} and Theorem~\ref{thm:basic_varieties_conuclei}, $t(x)$ is a multiplicative conucleus on $\mathcal{V}$ if and only if $\mathcal{V} \vDash t(x) \approx s(x)$ for some $s(x) \in \lbrace x^n, x \rbrace$. 
	
	Analogously, by Proposition~\ref{prop:cancellative_conuclei}, $t(x)$ is a conucleus on $\mathsf{V}(\mathbf{S}^\omega \oplus \mathbf{S}^\omega)$ if and only if it is trivial. Moreover, by Proposition~\ref{prop:Snomega_multiplicative_conuclei}, $t(x)$ is a multiplicative conucleus on $\mathsf{V}(\mathbf{S}_n^\omega \oplus \mathbf{S}_n^\omega)$ ($n \in \mathbb{N}$) if and only if $\mathsf{V}(\mathbf{S}_n^\omega \oplus \mathbf{S}_n^\omega) \vDash t(x) \approx s(x)$ for some $s(x) \in \lbrace x \cdot ((x^{n+1} \imp x^{2(n+1)}) \imp x^{n+1}), x \rbrace$.
	
	For the last example, let $\mathcal{K}$ be an arbitrary class of nontrivial totally ordered hoops and let $\mathcal{V} \coloneq \mathsf{V}(\mathcal{K} \cup \lbrace \mathbf{S} \rbrace)$. In this case, observe that $\mathcal{U} = \mathcal{WH}$. By Proposition~\ref{prop:WH_conuclei}, $t(x)$ is a conucleus on $\mathcal{V}$ if and only if it is trivial.
\end{example}

The last examples given in this section are in the context of varieties of BL-algebras.
\begin{example}
	Let $n,m \in \mathbb{N}$ and $\mathcal{K} \coloneq \lbrace (\mathbf{S}_n^\omega \oplus \mathbf{S}_m)^+ \rbrace$. In this case, $\mathcal{V}_0 = \mathsf{V}(\mathbf{S}_n^\omega)$ and $\mathcal{V}_1 = \mathsf{V}(\mathbf{S}_m)$. By Proposition~\ref{prop:Sn_multiplicative_conuclei}, Proposition~\ref{prop:Snomega_multiplicative_conuclei} and Theorem~\ref{thm:BL_conuclei}, $t(x)$ is a multiplicative conucleus on $\mathsf{V}(\mathcal{K})$ if and only if $\mathsf{V}(\mathcal{K}) \vDash t(x) \approx s(x)$, where $s(x)$ is some term in the following list:
	\begin{enumerate}[label=-]
		\item $\bot$;
		\item $\neg\neg x \cdot (\neg\neg x \imp x)$ ($\approx x$);
		\item $\neg\neg x \cdot (((\neg\neg x)^{n+1} \imp (\neg\neg x)^{2(n+1)}) \imp (\neg\neg x)^{n+1}) \cdot (\neg\neg x \imp x)$;
		\item $\neg\neg x \cdot (\neg\neg x \imp x)^m$;
		\item $\neg\neg x \cdot (((\neg\neg x)^{n+1} \imp (\neg\neg x)^{2(n+1)}) \imp (\neg\neg x)^{n+1}) \cdot (\neg\neg x \imp x)^m$.
	\end{enumerate}
	
	To end, let $n,m \in \mathbb{N}$ and $\mathcal{K} \coloneq \lbrace (\mathbf{S}_n \oplus \mathbf{S}_m^\omega)^+ \rbrace$. In this case, $t(x)$ is a multiplicative conucleus on $\mathsf{V}(\mathcal{K})$ if and only if $\mathsf{V}(\mathcal{K}) \vDash t(x) \approx s(x)$, where $s(x)$ is some term in the following list:
	\begin{enumerate}[label=-]
		\item $\bot$;
		\item $\neg\neg x \cdot (\neg\neg x \imp x)$ ($\approx x$);
		\item $(\neg\neg x)^n \cdot (\neg\neg x \imp x)$;
		\item $\neg\neg x \cdot (\neg\neg x \imp x) \cdot (((\neg\neg x \imp x)^{m+1} \imp (\neg\neg x \imp x)^{2(m+1)}) \imp (\neg\neg x \imp x)^{m+1})$;
		\item $(\neg\neg x)^n \cdot (\neg\neg x \imp x) \cdot (((\neg\neg x \imp x)^{m+1} \imp (\neg\neg x \imp x)^{2(m+1)}) \imp (\neg\neg x \imp x)^{m+1})$.
	\end{enumerate}
\end{example}

	\newpage
	
	Sebasti\'an Buss
	
	\noindent Instituto de Matem\'atica (INMABB), Departamento de Matem\'atica, Universidad Nacional del Sur (UNS)-CONICET, Bah\'ia Blanca, Argentina.
	
	\noindent sbuss94@gmail.com
	
	\
	
	Diego Casta\~no
	
	\noindent Departamento de Matem\'atica, Universidad Nacional del Sur (UNS), Bah\'ia Blanca, Argentina. \\
	Instituto de Matem\'atica (INMABB), Departamento de Matem\'atica, Universidad Nacional del Sur (UNS)-CONICET, Bah\'ia Blanca, Argentina.
	
	\noindent diego.castano@uns.edu.ar
	
	\
	
	Jos\'e Patricio D\'iaz Varela
	
	\noindent Departamento de Matem\'atica, Universidad Nacional del Sur (UNS), Bah\'ia Blanca, Argentina. \\
	Instituto de Matem\'atica (INMABB), Departamento de Matem\'atica, Universidad Nacional del Sur (UNS)-CONICET, Bah\'ia Blanca, Argentina.
	
	\noindent usdiavar@criba.edu.ar
	
\end{document}